\documentclass[11pt,letterpaper,reqno]{amsart}

\usepackage[T1]{fontenc}

\usepackage{amsmath}
\usepackage{amssymb}
\usepackage{amsfonts}
\usepackage{amsthm}
\usepackage{bbm}
\usepackage{enumitem}
\usepackage{booktabs}

\usepackage{graphicx}
\usepackage{float}
\usepackage{tikz}
\usepackage{pgfplots}
\pgfplotsset{compat=1.18}
\usetikzlibrary{positioning,shapes.geometric,arrows.meta}

\usepackage{xcolor}
\definecolor{darkgreen}{HTML}{006400}
\usepackage[
colorlinks=true,
linkcolor=blue,
citecolor=green,
urlcolor=magenta,
pdfstartview=FitH
]{hyperref}
\usepackage{doi}

\newtheorem{thm}{Theorem}[section]
\newtheorem{lem}[thm]{Lemma}
\newtheorem{prop}[thm]{Proposition}
\newtheorem{cor}[thm]{Corollary}

\newtheorem{prob}[thm]{Problem}

\theoremstyle{definition}

\newtheorem{rem}[thm]{Remark}
\newtheorem{defin}[thm]{Definition}
\numberwithin{equation}{section}

\newcommand{\sa}{\mathrm{sa}}
\newcommand{\RR}{\mathbb R}

\newcommand{\NN}{\mathbb N}
\newcommand{\one}{1}

\newcommand{\cl}{\operatorname{cl}}
\DeclareMathOperator{\spanop}{span}
\DeclareMathOperator{\dist}{dist}

\DeclareMathOperator{\dens}{dens}
\DeclareMathOperator{\supp}{supp}
\DeclareMathOperator{\Hull}{Hull}

\begin{document}
	
	\title[Kadison's problem 13 in the nonseparable case]
	{The nonseparable case of Kadison's problem on orthonormal bases of unitaries for type $\mathrm{II}_1$ factors}
	
	\author[Y.~He]{Yixin He}
	\address{School of Mathematical Sciences, Fudan University, Shanghai 200433, P.~R.~China}
	\email{yixin.he717@gmail.com}
	
	\author[Q.~Tang]{Quanyu Tang}
	\address{School of Mathematics and Statistics, Xi'an Jiaotong University, Xi'an 710049, P.~R.~China}
	\email{tangquanyu827@gmail.com}
	
	\author[Z.~Xu]{Zongben Xu}
	
	\address{School of Mathematics and Statistics, Xi'an Jiaotong University,
		Xi'an 710049, P.~R.~China}
	\email{zbxu@mail.xjtu.edu.cn}
	
	\author[T.~Zhang]{Teng Zhang}
	\address{School of Mathematics and Statistics, Xi'an Jiaotong University, Xi'an 710049, P.~R.~China}
	\email{teng.zhang@stu.xjtu.edu.cn}

	\subjclass[2020]{Primary 46L10; Secondary 46L51, 46L30.}
	\keywords{Kadison's problem, type $\mathrm{II}_1$ factor, orthonormal basis, unitary,
		symmetry, trace vector, density character}
	\begin{abstract}
		In 1967, Kadison asked ``does every type $\mathrm{II}_1$  factor have an orthonormal (with respect to the trace) basis consisting of unitaries?''
		In a previous paper \cite{HTZ26}, He, Tang, and Zhang resolved Kadison's problem in the separable case. We prove
		the complementary nonseparable case and thereby resolve Kadison's problem
		in full. In fact, the basis may be chosen to consist of self-adjoint
		unitaries. The proof combines a relative norming lemma under small-density
		constraints, a finite-layer certification scheme ensuring that the relevant
		Hilbert-space projections are represented by bounded elements of the
		ambient factor, and a transfinite extension along the density character of
		$L^2(M,\tau)$.
		\end{abstract}
	
	\maketitle
	
	\begingroup
	\hypersetup{linkcolor=darkgreen}
	\tableofcontents
	\endgroup

	\section{Introduction}\label{sec:introduction}
	
	Let \(M\) be a finite von Neumann algebra equipped with a faithful normal
	tracial state \(\tau\).    We denote by \(L^2(M,\tau)\) the Hilbert
	space completion of \(M\) for the norm \(\|x\|_2=\tau(x^*x)^{1/2}\). For
	\(x\in M\), its image in \(L^2(M,\tau)\) will be denoted by \(\widehat{x}\). We write
	$
	M_{\mathrm{sa}}=\{x\in M:x=x^*\}
	$
	for the self-adjoint part of \(M\).  We also denote by
	\(L^2(M,\tau)_{\mathrm{sa}}\) the real Hilbert space obtained as the
	\(\|\cdot\|_2\)-closure of \(M_{\mathrm{sa}}\) in \(L^2(M,\tau)\). 
	We use the convention that the inner product is linear in the first variable:
	\[
	\langle \widehat{x},\widehat{y}\rangle_2=\tau(y^*x),
	\qquad x,y\in M.
	\]
	When no confusion is possible, we identify an element \(x\in M\) with its
	vector \(\widehat x\in L^2(M,\tau)\).  Throughout the paper, \(M\) acts on
	\(L^2(M,\tau)\) by left multiplication.  A vector
	\(\xi\in L^2(M,\tau)\) is called a \emph{trace vector} for this action if
	\[
	\langle x\xi,\xi\rangle_2=\tau(x),\qquad x\in M.
	\]
	By an \emph{orthonormal basis} we always mean a Hilbert-space orthonormal
	basis, namely an orthonormal family whose closed linear span is the whole
	Hilbert space.  In the nonseparable case such a basis may be uncountable.
	In this paper, ``nonseparable'' refers to the Hilbert-space density character
	\(\dens L^2(M,\tau)\), not to an arbitrary representation of \(M\).
	
	We also denote by \(\mathcal U(M)\) the unitary group of \(M\):
	\[
	\mathcal U(M)=\{u\in M:u^*u=uu^*=1\}.
	\]
	A self-adjoint unitary will be called a \emph{symmetry}.

In 1967, at the Baton Rouge conference~\cite{Kad67}, Kadison asked the following trace-vector orthonormal  basis problem for \(\mathrm{II}_1\) factors; see also 
	\cite[p.~622, Problem~13]{Ge03} or \cite[Problem~S.2]{Pet20}.
	
	\begin{prob}[Kadison]\label{prob:kadison}
		Let \(M\) be a type \(\mathrm{II}_1\) factor with a faithful normal tracial state \(\tau\). Does \(L^2(M,\tau)\)  admit a
		Hilbert orthonormal basis consisting of trace vectors?
	\end{prob}
	
	In the tracial standard representation, Problem~\ref{prob:kadison} is also naturally reformulated as a
	\emph{unitary basis problem}.  Indeed, if \(u\in\mathcal U(M)\), then
	\[
	\langle x\widehat u,\widehat u\rangle_2
	=
	\tau(u^*xu)
	=
	\tau(x),\qquad x\in M,
	\]
	so \(\widehat u\) is a trace vector.  Conversely, as recalled in
	Proposition~\ref{prop:trace-vectors-unitaries}, every trace vector in
	\(L^2(M,\tau)\) is represented by a unitary in \(M\).  Thus,
	Problem~\ref{prob:kadison} is equivalent to asking whether
	\(L^2(M,\tau)\) has a Hilbert orthonormal basis consisting of unitaries in \(M\).

The study of the separable case of Problem~\ref{prob:kadison} spans nearly 60 years. Early affirmative results were obtained by explicit constructions for  group von Neumann algebras associated with countable discrete groups and for finite von Neumann algebras arising from group measure space constructions; see, for example, \cite{Chi73,Cho87}. In 2023, De and Mukherjee~\cite{DM23} proved that every separable diffuse finite von Neumann algebra admits a uniformly bounded self-adjoint orthonormal basis in its GNS space. Their construction, however, does not yield unitaries. The separable case of Problem~\ref{prob:kadison} was finally settled by He, Tang and Zhang in \cite{HTZ26}, where a key ingredient is a noncommutative Lyapunov theorem due to Akemann and Weaver~\cite{AW03}.
 A related application of Lyapunov's convexity theorem appears in \cite[Lemma~2.2]{ACFI26}, where it is used to construct sequences of unitaries satisfying prescribed trace-orthogonality conditions.
 
Thus, only the nonseparable case of Problem~\ref{prob:kadison} remains open.
A natural first attempt is to adapt the method of He, Tang, and
Zhang~\cite{HTZ26} to the nonseparable setting, and this was indeed our
starting point. Their argument, however, is intrinsically separable. The
greedy construction proceeds in such a way that, at each finite stage, only
finitely many orthogonality constraints have to be imposed, so that the
Akemann--Weaver noncommutative Lyapunov theorem can be applied. A
transfinite version of this construction breaks down already at the first
limit stage: one would need to produce a symmetry satisfying infinitely
many simultaneous orthogonality constraints, which is not covered by the
finite form of the Lyapunov theorem underlying their argument.
Nevertheless, the ideas and techniques developed in \cite{HTZ26} continue
to play an important role in our study of the nonseparable case, for example
in Lemmas~\ref{lem:large-relative-norming}, \ref{lem:bookkeeping},
and~\ref{lem:countable-extension}.

In this paper, we prove a slightly stronger statement than what is required in
Problem~\ref{prob:kadison}: the orthonormal basis may be chosen to consist
entirely of self-adjoint unitaries. More precisely, we prove the following.
 
 \begin{thm}\label{thm:nonseparable-main}
 	Let \(M\) be a type \(\mathrm{II}_1\) factor with a faithful normal tracial
 	state \(\tau\), and set
$
 	\kappa=\dens L^2(M,\tau).
$
 	If \(\kappa>\aleph_0\), then there is a family
 	\((s_i)_{i\in I}\) of self-adjoint unitaries in \(M\), indexed by a set
 	\(I\) with \(|I|=\kappa\), such that
$
 	\{\widehat{s_i}:i\in I\}
 $
 	is a Hilbert-space orthonormal basis of \(L^2(M,\tau)\).
 \end{thm}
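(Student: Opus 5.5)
We will build the basis by a transfinite recursion of length $\kappa$, with each step handled by a countable sub-construction modelled on the separable argument of \cite{HTZ26}. Fix a dense subset $\{x_\alpha:\alpha<\kappa\}$ of $L^2(M,\tau)_{\mathrm{sa}}$ consisting of elements of $M_{\mathrm{sa}}$. It suffices to work in the real Hilbert space $L^2(M,\tau)_{\mathrm{sa}}$: an orthonormal basis of it made of symmetries is automatically a complex orthonormal basis of $L^2(M,\tau)$, since $\widehat x=\widehat{\operatorname{Re}x}+i\widehat{\operatorname{Im}x}$ and inner products of self-adjoint elements are real.

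\textbf{Recursion.} We will produce an increasing chain of orthonormal families $S_\alpha$ of symmetries with $|S_\alpha|\le|\alpha|+\aleph_0$. Along the way we will also build an increasing chain of von Neumann subalgebras $N_\alpha\subseteq M$ with $\dens L^2(N_\alpha)\le|\alpha|+\aleph_0<\kappa$, arranged so that:
\begin{itemize}
\item $S_\alpha\subseteq N_\alpha$;
\item $\overline{\spanop}\,\widehat{S_\alpha}=L^2(N_\alpha)_{\mathrm{sa}}$;
\item $x_\beta\in N_{\beta+1}$ for every $\beta<\alpha$.
\end{itemize}
At limit ordinals we take unions, and the closed spans pass to the limit. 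At the end, $\bigcup_\alpha S_\alpha$ spans a closed subspace containing every $x_\alpha$, so it is a basis. It has cardinality $\kappa$, because any orthonormal basis of a space of density $\kappa$ has size $\kappa$. The key point of this design is that at each successor stage the infinitely many previous constraints are \emph{all} encoded by a single subalgebra $N_\alpha$. Orthogonality to $L^2(N_\alpha)$ then amounts to the single condition $E_{N_\alpha}(s)=0$, where $E_{N_\alpha}$ is the trace-preserving conditional expectation. This is exactly the kind of condition that the projection onto $L^2(N_\alpha)$, which is represented by bounded data, can certify. It sidesteps the infinite-Lyapunov obstruction described in the introduction.

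\textbf{Successor step.} This is the countable extension. Given $N=N_\alpha$ with $\dens L^2(N)<\kappa$ and the new vector $x=x_\alpha$, we need a larger subalgebra $N'\ni x$ with $\dens L^2(N')<\kappa$ and countably many new symmetries $s_n\in N'$ such that:
\begin{itemize}
\item $E_N(s_n)=0$ for all $n$;
\item the $s_n$ are mutually orthogonal;
\item their closed span is $L^2(N')_{\mathrm{sa}}\ominus L^2(N)_{\mathrm{sa}}$.
\end{itemize}
We will do this greedily over $n$, interleaving with a bookkeeping enumeration of a countable dense subset of the target complement. At stage $n$ we have finitely many symmetries already chosen and a target vector $y$ in the relative complement. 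The relative norming lemma (Lemma~\ref{lem:large-relative-norming}) is used here: because $\dens L^2(N)<\kappa=\dens L^2(M)$, there is room in $N'\cap N^{\perp}$. From that room we want a symmetry $s$ that is:
\begin{itemize}
\item orthogonal to $L^2(N)$;
\item orthogonal to the finitely many earlier $s_k$;
\item such that $\langle y,s\rangle$ is a definite fraction of $\|y\|_2$, so that the residual of $y$ shrinks geometrically.
\end{itemize}
We will obtain $s$ via the Akemann--Weaver Lyapunov argument inside a diffuse abelian subalgebra chosen relatively free from $N$ (for instance, inside $N'$ enlarged by a Haar unitary with $E_N$ vanishing on its nontrivial powers). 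Lemmas~\ref{lem:bookkeeping} and~\ref{lem:countable-extension} then organize the countable sequence so that every dense target is exhausted. Finally, we close $N'$ under the algebra generated by the new symmetries, which may require iterating $\omega$ times and taking the union, a standard Löwenheim--Skolem style closure.

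\textbf{Main obstacle.} The hardest step is guaranteeing that the symmetries produced really lie in the orthocomplement of $L^2(N)$ while keeping a \emph{quantitative} norming bound relative to $N$. Orthogonality to an infinite-dimensional subspace is not a finite set of linear constraints, and it is not enough to know that the projection $E_N$ exists. We must also know that the relevant residual projections are implemented by bounded elements of $M$, so that the symmetries built from spectral projections of bounded self-adjoint operators stay in $M$. This is what the finite-layer certification scheme mentioned in the abstract is for. We will try first to reduce, via the density gap, to choosing $s$ inside a subalgebra $\{N,x\}''$-relative copy of $L^\infty[0,1]$ that is orthogonal to $N$. The Lyapunov theorem is then applied only to the finitely many current constraints.
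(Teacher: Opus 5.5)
Your outer recursion matches the paper's final recursion in Section~\ref{sec:nonseparable-main}: nested $N_\alpha$ with $\dens L^2(N_\alpha)\le|\alpha|+\aleph_0$, nested symmetry bases, $x_\alpha$ absorbed at stage $\alpha+1$, unions at limits, and all earlier constraints encoded by $E_{N_\alpha}(s)=0$ as in Proposition~\ref{prop:RE}. The gap is the successor step. You claim that $N'\supseteq N\cup\{x\}$ is reached by adjoining only countably many symmetries, i.e.\ that $L^2(N')_{\sa}\ominus L^2(N)_{\sa}$ is separable. This is false in general once $N=N_\alpha$ is nonseparable, which must happen at some stage since $\kappa>\aleph_0$. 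Put $y=x-E_N(x)\neq0$. Then $Ny\subset N'$ is orthogonal to $L^2(N)$, and $\|ny\|_2^2=\tau(n^*n\,E_N(yy^*))$ for $n\in N$. Hence $\overline{Ny}$ is isometric to $L^2(N)q$, where $q$ is the support of $E_N(yy^*)$. This already has density $\dens L^2(N)$ when, e.g., $N$ is a nonseparable factor: apply Lemma~\ref{lem:corner-density} to $N$ and $q$, and note $L^2(qNq)\subseteq L^2(N)q$. Equivalently, once all of $N$ enters as parameters, the self-adjoint words form an uncountable task list. The $\omega$-length schedule of Lemma~\ref{lem:bookkeeping} cannot exhaust it, and a L\"owenheim--Skolem closure only yields countable pieces. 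Lemma~\ref{lem:countable-extension} itself shows this: it produces a block $L^2(R)_{\sa}=L^2(C)_{\sa}\oplus\overline{\spanop_{\RR}}(\Omega_0\cup U)$ over a countably generated $C\subset N$, not over $N$. So the successor step is an extension of size $\lambda=\dens L^2(N_\alpha)$, and that is where nearly all of the paper's work lies.

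Such an extension needs an inner transfinite construction. Each new symmetry must be orthogonal to $N$ and to the possibly uncountable family $U$ already added. The residual $y-E_N(y)-P_U(y)$ must also be a bounded element of $M$, since Lemma~\ref{lem:large-relative-norming} needs $r\in M$ and its gain involves $\|r\|_\infty$. Your single-subalgebra encoding no longer helps inside this step. $L^2(N)_{\sa}\oplus\overline{\spanop_{\RR}}U$ is not the $L^2$-space of a subalgebra until the extension is finished, and the projection of a bounded operator onto $\overline{\spanop_{\RR}}U$ need not a priori be represented by an element of $M$. The paper handles exactly this with three ingredients:
\begin{itemize}
\item completed blocks and finite-layer certificates, giving the bounded formula $E_R-E_CE_R-P_\Theta$;
\item the Skolem-hull strong closure of Proposition~\ref{prop:strong-closure};
\item the induction on $\lambda$ in Proposition~\ref{prop:LBE}, which re-certifies the family built so far as one layer at each successor stage.
\end{itemize}
You name this as the main obstacle but offer nothing in its place.

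Your fallback for producing $s$ also fails. $E_N(s)=0$ is still infinitely many scalar constraints, so Akemann--Weaver does not apply directly. If you confine $s$ to a diffuse abelian $A$ with $E_N(A\ominus\mathbb C\one)=0$ to make it apply, then $\langle r,s\rangle_2=\langle E_A(r),s\rangle_2$, which may vanish. Adapting $A$ to $r$ instead generally destroys $E_N(A\ominus\mathbb C\one)=0$. The paper proves Lemma~\ref{lem:large-relative-norming} by Krein--Milman plus the density-gap perturbation Lemma~\ref{lem:perturb}.

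Finally, geometric decay of the residual is impossible. For every symmetry $s$ we have $|\langle r,s\rangle_2|\le\|r\|_1$, and $\|r\|_1/\|r\|_2$ can be arbitrarily small (e.g.\ $r=p-\tau(p)\one$ with $\tau(p)\to0$). So only the gain $\|r\|_2^2/\|r\|_\infty$ with $\|r_n\|_\infty\le K_b\sqrt n$ is available. This is why the schedule $\sum_k 1/n_k=\infty$ and Lemma~\ref{lem:sparse-greedy-descent} are needed.
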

 
As an immediate consequence of Theorem~\ref{thm:nonseparable-main}, we obtain the following.

\begin{cor}\label{cor:main}
	Let \(M\) be a nonseparable type \(\mathrm{II}_1\) factor with a faithful
	normal tracial state \(\tau\). Then \(L^2(M,\tau)\) admits a Hilbert-space
	orthonormal basis consisting of self-adjoint unitaries in \(M\), and hence
	of trace vectors. Together with the separable case proved by
	He--Tang--Zhang~\cite{HTZ26}, this settles Problem~\ref{prob:kadison}
	in the affirmative.
\end{cor}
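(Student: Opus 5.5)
The plan is a transfinite construction of length $\kappa$ along a filtration of $L^2(M,\tau)$ by ``small'' subalgebras. Fix a dense subset $\{x_\alpha:\alpha<\kappa\}$ of $M_{\mathrm{sa}}$ in $\|\cdot\|_2$. Since $L^2(M,\tau)$ is the complexification of $L^2(M,\tau)_{\mathrm{sa}}$, a real orthonormal basis of $L^2(M,\tau)_{\mathrm{sa}}$ made of symmetries is also a complex orthonormal basis of $L^2(M,\tau)$. Indeed, $\tau(s_jt)$ is real for self-adjoint $s_j,t$, so real orthogonality of symmetries gives complex orthogonality, and the real span is dense in the self-adjoint part. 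So it suffices to work in the real Hilbert space $L^2(M,\tau)_{\mathrm{sa}}$ with symmetries.

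\textbf{Step 1 (filtration).} Build an increasing continuous chain $(N_\alpha)_{\alpha<\kappa}$ of von Neumann subalgebras with $1\in N_\alpha$, $x_\alpha\in N_{\alpha+1}$, $\dens L^2(N_\alpha)\le|\alpha|+\aleph_0<\kappa$ for $\alpha<\kappa$, and $\bigcup_\alpha L^2(N_\alpha)$ dense. The $N_\alpha$ are obtained by closing under the countably many operations needed later, via a L\"owenheim--Skolem type argument; in particular they can be taken to be $\mathrm{II}_1$ factors if needed. We also require that the orthogonal projections $P_\alpha$ onto the closed span of the vectors constructed before stage $\alpha$ coincide with the trace-preserving conditional expectations $E_{N_\alpha}$. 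This is the point where projections are ``represented by bounded elements'': $P_\alpha(x)\in N_\alpha\subset M$ with $\|P_\alpha x\|\le\|x\|$.

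\textbf{Step 2 (induction).} Suppose we have an orthonormal family of symmetries $S_\alpha$ with $\overline{\spanop}\,S_\alpha=L^2(N_\alpha)_{\mathrm{sa}}$. To pass to $\alpha+1$ we must find countably many, or $|\alpha|$-many, new symmetries in $N_{\alpha+1}$ that are orthogonal to $L^2(N_\alpha)$ and together with $S_\alpha$ span $L^2(N_{\alpha+1})_{\mathrm{sa}}$. The orthogonal complement $L^2(N_{\alpha+1})\ominus L^2(N_\alpha)$ has density at most $|\alpha|+\aleph_0$. For the countable case we adapt the greedy scheme of \cite{HTZ26}. At each step we need a symmetry $s\in N_{\alpha+1}$ orthogonal to the infinite-dimensional subspace $L^2(N_\alpha)$ plus the finitely many vectors already produced, and correlated with the next target vector, namely $y-E_{N_\alpha}(y)$ minus its components along the new symmetries.

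The key input is a relative norming lemma. If $N\subset Q$ with $Q$ diffuse relative to $N$ (in the sense that $N'\cap Q$ or a suitable relative commutant is diffuse, which Step~1's closure can ensure, e.g.\ by including a diffuse abelian subalgebra of $Q\cap N'$), then for any $y\perp L^2(N)$ there is a symmetry $s$ with $E_N(s)=0$, $s$ orthogonal to finitely many prescribed vectors, and $\tau(sy)\ge c\|y\|_2^2/\|y\|$. One gets $s$ from $2p-1$ with $p$ a projection in a diffuse abelian subalgebra commuting with $N$, chosen via Akemann--Weaver/Lyapunov to kill finitely many constraints, combined with the spectral projections of $y$. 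The resulting bound $\|y-\sum\langle y,s_k\rangle s_k\|_2\to0$ follows as in \cite{HTZ26} by a bookkeeping enumeration that revisits each target infinitely often. For uncountable $|\alpha|$, we further decompose the step $N_\alpha\subset N_{\alpha+1}$ into an internal chain and recurse; by induction on $\kappa$ it suffices to handle countable steps, since each successor step can itself be filtered by countable-density increments.

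\textbf{Step 3 (limits and conclusion).} At limit $\lambda$, $S_\lambda=\bigcup_{\alpha<\lambda}S_\alpha$ spans $\overline{\bigcup L^2(N_\alpha)}=L^2(N_\lambda)$ by continuity of the chain, so no new constraints arise. This is exactly how the approach avoids the obstruction of needing infinitely many simultaneous constraints: orthogonality to the whole previous subspace is encoded as the single condition $E_{N_\alpha}(s)=0$, which is achieved structurally and not via Lyapunov. The final family has cardinality $\kappa$.

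\textbf{Main obstacle.} The hardest part is the relative norming lemma with the constraint $E_{N_\alpha}(s)=0$. We need symmetries orthogonal to a large subalgebra yet still norming vectors in the complement. The first thing to try is to take $s=u(2p-1)u^*$-type elements, or products $a(2p-1)$ with $p\in N_\alpha'\cap N_{\alpha+1}$ diffuse of trace $1/2$, which automatically satisfy $E_{N_\alpha}(s)=(2\tau(p)-1)\cdot(\text{stuff})=0$. We then arrange the chain so that such relative commutants exist, using that $M$ is a factor of density $\kappa$ and a small $N_\alpha$ has a large relative commutant. Here we would invoke Popa's results on relative commutants of separable or small subalgebras.
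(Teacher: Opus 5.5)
Your outer recursion (nested $N_\alpha$ of density $\le|\alpha|+\aleph_0$ with nested symmetry bases, one element of a dense family absorbed at each successor, unions at limits) matches the paper's final recursion. However, both places where the real work lies have genuine gaps.

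\emph{The relative norming lemma.} The condition $E_N(s)=0$ is not a single structural condition. It is $\dens L^2(N)$ many linear constraints, and your mechanism for meeting it while norming $y$ does not work. The symmetries you propose ($2p-1$, or $a(2p-1)$ with $a\in N$, $p\in N'\cap M$) all lie in $N\vee(N'\cap M)$. They therefore have zero correlation with every $y$ orthogonal to $L^2(N\vee(N'\cap M))$. Conjugates $u(2p-1)u^*$ do not satisfy $E_N(s)=0$ in general, and neither do symmetries built from spectral projections of $y$, which need not commute with $N$. Moreover, no closure procedure can make $N_\alpha'\cap M$ diffuse. In $M=L(\mathbb F_\kappa)$ with $\kappa$ uncountable, as soon as $N_\alpha$ contains the copy of $L(\mathbb F_2)$ generated by two free generators, one has $N_\alpha'\cap M=\mathbb C$.

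The paper's key idea is different. Let $q=(1+r/\|r\|_\infty)/2$ and let $\mathcal D$ be $L^2$-dense in $N_{\sa}$. Take an extreme point $p$ of the ultraweakly compact slice $K=\{0\le x\le 1:\ \tau(yx)=\tau(yq),\ y\in\mathcal D\cup\Gamma\cup\{r\}\}$. This $p$ must be a projection. Every nonzero spectral corner $eMe$ has density $\kappa$ by factoriality (Lemma~\ref{lem:corner-density}), while the constraints generate an algebra of density $<\kappa$. So Lemma~\ref{lem:perturb} yields a nonzero $h=h^*\in eMe$ annihilating all constraints, which contradicts extremality. Then $s=2p-1$ has the properties of Lemma~\ref{lem:large-relative-norming}. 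This uses the full density of $M$. Hence new symmetries must be sought in $M$, not in a prescribed $N_{\alpha+1}$, and $N_{\alpha+1}$ is then the algebra they generate. That in turn creates new words that must be absorbed.

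\emph{The uncountable successor step.} You cannot filter $N_\alpha\subset N_{\alpha+1}$ by von Neumann algebras with countable-density increments. Suppose $Q\subsetneq Q'$ with $Q$ a $\mathrm{II}_1$ factor of uncountable density $\mu$. Pick $0\neq\xi=x-E_Q(x)$ with $x\in Q'$ and put $d=E_Q(\xi\xi^*)$. Then $q\xi\mapsto qd^{1/2}$ identifies $\overline{Q\xi}\subset L^2(Q')\ominus L^2(Q)$ with $L^2(Q)\supp(d)$, which has density $\mu$ by corner density.

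So inside a successor step you must choose uncountably many symmetries. Each must be orthogonal to $L^2(N_\alpha)$ and to an uncountable family already chosen, whose span is not of the form $L^2(Q)\ominus L^2(N_\alpha)$. The residual fed to the norming lemma is a bounded operator only if the projection onto that family is represented in $M$. Supplying this is the purpose of the paper's machinery:
\begin{itemize}
\item completed blocks $(C,R,\Omega_0\cup U)$ with a \emph{small} $C\subset N$;
\item finite-layer certificates;
\item the Skolem-hull strong closure (Proposition~\ref{prop:strong-closure});
\item the induction on $\lambda$ with re-certification of $U_\alpha$ as a single layer (Proposition~\ref{prop:LBE}).
\end{itemize}
Your sketch has no substitute for any of this.

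A smaller point: visiting each target infinitely often does not close the greedy descent. The available estimate is only $\|r_n\|_\infty\lesssim\sqrt n$, so the guaranteed gain per visit is of order $1/n$. The visiting times must therefore satisfy $\sum_k 1/n_k=\infty$ (Lemma~\ref{lem:bookkeeping}).
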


Theorem~\ref{thm:nonseparable-main} requires a different mechanism. One must extend bases
	through subalgebras of smaller density character while controlling
	orthogonality to previously chosen, possibly very large, backgrounds.  The
	finite-layer certified background formalism records enough von Neumann
	algebraic structure to ensure that the relevant Hilbert-space projections
	remain bounded elements of $M$.  This allows a transfinite extension argument
	over the density character of $L^2(M,\tau)$.

	\begin{rem}\label{rem:nonfactor-limitation}
		Theorem~\ref{thm:nonseparable-main} removes the separability assumption
		from \cite[Theorem~1]{HTZ26} within the class of type
		\(\mathrm{II}_1\) factors.  Our argument does not directly yield the
		corresponding nonseparable statement for arbitrary diffuse finite
		von Neumann algebras.
		
		Indeed, the proof of Lemma~\ref{lem:large-relative-norming} relies on
		the factor property through Lemma~\ref{lem:corner-density}: if
$
		\kappa=\dens L^2(M,\tau),
$
		then every nonzero corner \(eMe\) has \(L^2\)-density \(\kappa\).  This
		ensures that fewer than \(\kappa\) constraints cannot exhaust the
		self-adjoint \(L^2\)-space of a nonzero spectral corner, which is the
		key point in Lemma~\ref{lem:perturb}.
		
		For a finite von Neumann algebra with nontrivial center, nonzero
		corners may have strictly smaller density.  For instance, if
$
		M=M_0\oplus M_1,
$
		with \(M_0\) separable diffuse and
		\(\dens L^2(M_1)=\kappa>\aleph_0\), then
		\(\dens L^2(M)=\kappa\), while for the central projection
		\(e=(1,0)\),
	$
		\dens L^2(eMe)=\aleph_0<\kappa.
	$
		Hence a constraint family of cardinality \(<\kappa\) need not be small
		relative to the corner in which the perturbation is required.
		Accordingly, the present density-gap argument cannot be applied
		verbatim to the nonseparable analogue of
		\cite[Theorem~1]{HTZ26} in the general diffuse finite case.  A
		center-wise refinement, or some other replacement for the uniform
		corner-density argument, would be needed.
	\end{rem}

We introduce the notain will be used in this paper. If \(N\subset M\) is a von Neumann subalgebra, we denote by
\(E_N:M\to N\) the unique \(\tau\)-preserving normal conditional
expectation. Its \(L^2\)-extension is the orthogonal projection onto
\(L^2(N,\tau)\). In particular,
\begin{equation}\label{eq:conditional-expectation-orthogonality}
	E_N(x)=0
	\quad\Longleftrightarrow\quad
	x\perp L^2(N,\tau),
	\qquad x\in M.
\end{equation}
If \(F\subset M\), the notation \(E_N(F)=0\) means that
\(E_N(f)=0\) for every \(f\in F\).
  If
	\(F,G\subset L^2(M,\tau)\), or if one of them is a closed subspace, then
	\(F\perp G\) means that every element of \(F\) is orthogonal to every element
	of \(G\).  We write \(F\dot\cup G\) for a disjoint union and \(\mathcal P(F)\)
	for the power set of \(F\).  If \(\mathcal P_0\subset M\), then
	\(\operatorname{*-alg}_{\mathbb Q+i\mathbb Q}(\mathcal P_0)\) denotes the
	unital \(*\)-algebra over \(\mathbb Q+i\mathbb Q\) generated by
	\(\mathcal P_0\).
	
	\medskip
\noindent\textbf{Sketch of the proof and organization of the paper.}
\label{subsec:proof-sketch}
The proof of Theorem~\ref{thm:nonseparable-main} consists of four main
steps.

\medskip
\textbf{Step 1: Trace-zero reduction and relative norming.}
For a finite von Neumann algebra \(P\), put
\[
H_0(P)=L^2(P,\tau)_{\mathrm{sa}}\ominus\RR\one.
\]
By Proposition~\ref{prop:trace-vectors-unitaries} and
Lemma~\ref{lem:trace-zero-reduction}, it is enough to construct an
orthonormal basis of \(H_0(M)\) consisting of trace-zero symmetries.

The key analytic input is Lemma~\ref{lem:large-relative-norming}. Suppose
that \(N\subset M\) has \(L^2\)-density strictly smaller than that of
\(M\), and that \(0\ne r=r^*\in M\) is orthogonal both to
\(L^2(N,\tau)_{\mathrm{sa}}\) and to fewer than
\(\dens L^2(M,\tau)\) additional self-adjoint constraints. The lemma
produces \(s\in\mathcal S_0(M)\) satisfying the same orthogonality
conditions and
\begin{equation}\label{eq:sketch-relative-norming}
	\langle r,s\rangle_2
	=
	\frac{\|r\|_2^2}{\|r\|_\infty}.
\end{equation}
Its proof combines the corner perturbation lemma,
Lemma~\ref{lem:perturb}, with a Krein--Milman argument. The strict density
gap is used only in this step.

\medskip
\textbf{Step 2: Bounded projections through certified
	backgrounds.}
For an uncountable orthonormal family, the Hilbert-space projection onto
an arbitrary subfamily need not be represented by an element of \(M\).
To retain bounded representatives,
Sections~\ref{sec:certified-backgrounds} and~\ref{sec:strong-closure}
introduce completed blocks and finite-layer certificates.

A completed block has the form
\begin{equation*}
	L^2(Q,\tau)_{\mathrm{sa}}
	=
	L^2(A,\tau)_{\mathrm{sa}}
	\oplus
	\overline{\spanop_{\RR}}T,
\end{equation*}
and Lemma~\ref{lem:block-projection} gives the bounded projection formula
\[
P_T^{\mathfrak B}(x)
=
E_Q(x)-E_A(E_Q(x)),
\qquad x=x^*\in M.
\]
Finite-layer certificates are obtained by adjoining completed blocks one
layer at a time. Proposition~\ref{prop:strong-closure} shows that every
small subfamily of a certified background can be enlarged to a still
small certified subbackground. Consequently, all projections used in the
subsequent construction are represented by bounded self-adjoint elements
of \(M\).

\medskip
\textbf{Step 3: Countable and all-cardinal extension.}
Lemma~\ref{lem:countable-extension} is the basic construction step. Given
a von Neumann subalgebra \(N\) of smaller density, a certified background
\(\Omega\perp L^2(N,\tau)_{\mathrm{sa}}\), and countable sets of elements
to be absorbed, it constructs a separable completed block
\begin{equation*}
	L^2(R,\tau)_{\mathrm{sa}}
	=
	L^2(C,\tau)_{\mathrm{sa}}
	\oplus
	\overline{\spanop_{\RR}}(\Omega_0\cup U),
\end{equation*}
where \(C\subset N\), the set \(\Omega_0\subset\Omega\) is closed and
certified, and \(U\) is a countable orthonormal family of trace-zero
symmetries orthogonal to both \(N\) and \(\Omega\).

For every eventual self-adjoint \( * \)-polynomial word \(y\), one
considers the bounded residual
\begin{equation}\label{eq:sketch-residual}
	b(y)=y-E_N(y)-P_\Omega(y).
\end{equation}
Whenever its component outside the span of the previously chosen
symmetries is nonzero, the norming identity
\eqref{eq:sketch-relative-norming} supplies a new symmetry and decreases
the squared distance quantitatively. Lemma~\ref{lem:bookkeeping} schedules
each eventual word at times \(n_k\) satisfying
\[
\sum_k\frac1{n_k}=\infty.
\]
The sparse greedy descent argument then forces every residual
\eqref{eq:sketch-residual} into
\(\overline{\spanop_{\RR}}U\).

Proposition~\ref{prop:LBE} upgrades the countable construction to every
cardinal
\[
\aleph_0\le\lambda<\dens L^2(M,\tau)
\]
by transfinite induction. No regularity assumption is imposed on the
ambient density character.

\medskip
\textbf{Step 4: Relative basis extension and transfinite
	completion.}
Proposition~\ref{prop:RE} converts the certified extension theorem into a
relative basis-extension principle. If \(H_0(N)\) already has a symmetry
basis and \(X\subset M_{\mathrm{sa}}\) has controlled cardinality, then
there is a larger von Neumann algebra \(P\supset N\cup X\) and an
orthonormal family \(U\subset\mathcal S_0(P)\) such that
\begin{equation*}
	H_0(P)
	=
	H_0(N)
	\oplus
	\overline{\spanop_{\RR}}U.
\end{equation*}

Starting from the separable diffuse abelian algebra of
Lemma~\ref{lem:start}, equipped with its Walsh symmetry basis, we apply
this extension principle along an \(L^2\)-dense family
\[
(a_\alpha)_{\alpha<\kappa}\subset H_0(M),
\qquad
\kappa=\dens L^2(M,\tau).
\]
The resulting nested von Neumann algebras \(N_\alpha\) have density
strictly smaller than \(\kappa\), and their nested symmetry bases absorb
\(a_\alpha\) at stage \(\alpha+1\). Their union is therefore an
orthonormal basis of \(H_0(M)\). Lemma~\ref{lem:trace-zero-reduction} then
completes the proof of Theorem~\ref{thm:nonseparable-main}.

The organization of the paper follows these four steps.
Section~\ref{sec:preliminaries} contains the trace-vector and trace-zero
reductions. Section~\ref{sec:relative-norming} proves the perturbation and
relative norming lemmas. Sections~\ref{sec:certified-backgrounds}
and~\ref{sec:strong-closure} develop certified backgrounds and their
strong closure. Section~\ref{sec:countable-extension} proves the
countable extension theorem, and
Section~\ref{sec:all-cardinal-extension} passes to arbitrary cardinals
below the ambient density. Finally,
Section~\ref{sec:nonseparable-main} derives the relative extension
property and completes the transfinite recursion.

	\medskip
	\noindent\textbf{Acknowledgements.}
Teng Zhang is supported by the China Scholarship Council, the Young Elite Scientists Sponsorship Program for PhD Students (China Association for Science and Technology), and the Fundamental Research Funds for the Central Universities at Xi'an Jiaotong University (Grant No.~xzy022024045). 

	\medskip
\noindent\textbf{AI tool disclosure.} ChatGPT 5.6 Sol was used for English-language editing, proofreading, and grammatical corrections, and as an exploratory tool to discuss possible approaches to selected results in this paper, including, for example, Lemma~\ref{lem:closed-subblock} and Proposition~\ref{prop:strong-closure}. The final mathematical arguments and proofs were independently developed, checked, and written by the human authors.
	\section{Preliminaries and reductions}\label{sec:preliminaries}
	
	For a finite von Neumann algebra $(P,\tau)$, we shall write
	\[
	H_0(P)
	=
	L^2(P,\tau)_{\sa}\ominus \RR\one
	=
	\{\xi\in L^2(P,\tau)_{\sa}:\langle \xi,\one\rangle_2=0\}.
	\]
	Thus, if $x=x^*\in P$, then $\widehat{x}\in H_0(P)$ exactly when
	$\tau(x)=0$.  We also write
	\[
	\mathcal S_0(P)
	=
	\{s\in P:s=s^*=s^{-1},\ \tau(s)=0\}
	\]
	for the set of trace-zero symmetries in $P$.  When $P$ is a von Neumann
	subalgebra of a fixed finite von Neumann algebra, these notations are always
	understood with respect to the restricted trace.

We first record the following standard reduction from trace vectors to unitaries, which is an immediate consequence of the classical reflection theorem for trace vectors \cite[Theorem~7.2.15]{KR97II}.
	
	\begin{prop}\label{prop:trace-vectors-unitaries}
		Let \((M,\tau)\) be a finite von Neumann algebra in its tracial standard
		representation on \(L^2(M,\tau)\). A vector \(\xi\in L^2(M,\tau)\) is a trace
		vector for \(M\) if and only if
		$\xi=\widehat{u}$
		for some unitary \(u\in\mathcal U(M)\).
	\end{prop}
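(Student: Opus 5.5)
The ``if'' direction is the computation already recorded in the introduction: for $u\in\mathcal U(M)$ we have $\langle x\widehat u,\widehat u\rangle_2=\tau(u^*xu)=\tau(x)$ for all $x\in M$. So the content lies in the converse. Let $\xi\in L^2(M,\tau)$ be a trace vector. I will produce the unitary in two steps, without invoking the reflection theorem as a black box.

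\emph{Step 1: a partial isometry.} Define $V_0:M\widehat{1}\to M\xi$ by $V_0(x\widehat1)=x\xi$. The trace-vector identity gives
\[
\|x\xi\|_2^2=\langle x^*x\xi,\xi\rangle_2=\tau(x^*x)=\|x\widehat1\|_2^2,
\]
so $V_0$ is well defined and isometric. Since $M\widehat1$ is dense, $V_0$ extends to an isometry $V$ of $L^2(M,\tau)$ onto $\overline{M\xi}$. By construction $V$ commutes with the left action of $M$, so $V\in M'$.

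\emph{Step 2: a unitary in $M'$.} In the standard representation $M'=JMJ$, and $JMJ$ is again a finite von Neumann algebra. In a finite von Neumann algebra an isometry is unitary, so $V$ is unitary and $\overline{M\xi}=L^2(M,\tau)$. Concretely, $V^*V=1$ forces $VV^*=1$ by finiteness of $M'$. Write $V=JwJ$ with $w\in\mathcal U(M)$. Then
\[
\xi=V\widehat1=Jw\widehat1=J\widehat w=\widehat{w^*},
\]
so $\xi=\widehat u$ with $u=w^*\in\mathcal U(M)$.

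\emph{Main obstacle.} The only substantive point is Step 2: identifying $M'$ with $JMJ$, and using finiteness of $M'$ to upgrade the isometry $V$ to a unitary. The first fact is standard Tomita theory in the tracial case, where $J\widehat x=\widehat{x^*}$. The second follows because $JMJ$ carries the faithful normal trace $y\mapsto\tau(JyJ)$, so $1-VV^*$ is a projection of trace zero. Alternatively, one may simply cite \cite[Theorem~7.2.15]{KR97II} for this step, as the paper indicates.
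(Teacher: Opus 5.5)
Your proof is correct and follows the paper's argument: you build the isometry $V\in M'$ from $x\widehat{1}\mapsto x\xi$, upgrade it to a unitary by finiteness of $M'$, and pull it back to $M$ through the anti-isomorphism $M\to M'$, which you simply make explicit as $\Theta(a)=Ja^*J$ (so your $JwJ$ is the paper's $\Theta(w^*)$, which is why you get $u=w^*$). One cosmetic point: $y\mapsto\tau(JyJ)$ is conjugate-linear, so the linear trace on $JMJ$ is $y\mapsto\langle y\widehat{1},\widehat{1}\rangle_2=\overline{\tau(JyJ)}$; this changes nothing for the projection $1-VV^*$.
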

	
\begin{proof}
If \(u\in\mathcal U(M)\), then for every \(x\in M\),
\[
\langle x\widehat{u},\widehat{u}\rangle_2
=\tau(u^*xu)
=\tau(x),
\]
so \(\widehat{u}\) is a trace vector.

	Conversely, suppose that \(\xi\) is a trace vector, and let
	\(\Omega=\widehat{1}\). Since \(\Omega\) is a generating trace vector for
	\(M\), the classical reflection theorem
	\cite[Theorem~7.2.15]{KR97II} yields a \(*\)-anti-isomorphism $
	\Theta:M\longrightarrow M'$
	characterized by
	\[
	\Theta(a)\Omega=a\Omega,\qquad a\in M,
	\]
	and, moreover, \(M'\) is finite.
	
	Define
	\[
	V_0:M\Omega\longrightarrow L^2(M,\tau),
	\qquad
	V_0(x\Omega)=x\xi.
	\]
	Since \(\xi\) is a trace vector, for every \(x\in M\),
	\[
	\|V_0(x\Omega)\|_2^2
	=\langle x^*x\xi,\xi\rangle_2
	=\tau(x^*x)
	=\|x\Omega\|_2^2.
	\]
	Thus \(V_0\) extends to an isometry
	\(V\in B(L^2(M,\tau))\). Moreover, for \(a,x\in M\),
	\[
	V(ax\Omega)=ax\xi=aV(x\Omega),
	\]
	and hence \(V\in M'\). Since \(M'\) is finite, every isometry in
	\(M'\) is unitary. Therefore \(V\in\mathcal U(M')\).
	
	By the surjectivity of \(\Theta\), there exists \(u\in M\) such that
	\(V=\Theta(u)\). Since \(\Theta\) is a \(*\)-anti-isomorphism and \(V\)
	is unitary, \(u\in\mathcal U(M)\). Finally,
	\[
	\xi
	=V\Omega
	=\Theta(u)\Omega
	=u\Omega
	=\widehat{u}.
	\]
This proves the proposition.
\end{proof}

	The next lemma gives the basic trace-zero reduction needed for the nonseparable case. The same underlying idea also appears in \cite{HTZ26}.
	
	\begin{lem}\label{lem:trace-zero-reduction}
		Let $M$ be a $\mathrm{II}_1$ factor.  If $B\subset\mathcal S_0(M)$ is a Hilbert orthonormal basis of the real Hilbert space $H_0(M)$, then $\{\widehat{\one}\}\cup\{\widehat{s}:s\in B\}$ is a Hilbert orthonormal basis of the complex Hilbert space $L^2(M,\tau)$.  Moreover, every vector in $\{\widehat{\one}\}\cup\{\widehat{s}:s\in B\}$ is a trace vector for the left action of $M$.
	\end{lem}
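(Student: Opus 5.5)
We will verify three things directly: orthonormality, completeness (totality), and the trace-vector property.

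\emph{Orthonormality.} For $s\in B$ we have $\|\widehat s\|_2^2=\tau(s^*s)=\tau(1)=1$. Also $\langle \widehat s,\widehat{\one}\rangle_2=\tau(s)=0$, since $B\subset\mathcal S_0(M)$. For distinct $s,t\in B$, the complex inner product $\langle\widehat s,\widehat t\rangle_2=\tau(t^*s)=\tau(ts)$ coincides with the real inner product in $H_0(M)$. Indeed, for self-adjoint $s,t$ the number $\tau(ts)$ is real, because $\overline{\tau(ts)}=\tau((ts)^*)=\tau(st)=\tau(ts)$. So orthonormality in $H_0(M)$ gives orthogonality in $L^2(M,\tau)$, and $\{\widehat{\one}\}\cup\{\widehat s:s\in B\}$ is orthonormal.

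\emph{Completeness.} This is the only step needing care: we pass from a real basis of the self-adjoint part to a complex basis of the whole space. Let $\xi\in L^2(M,\tau)$ be orthogonal to $\widehat{\one}$ and to every $\widehat s$. Use the conjugate-linear isometric involution $J\widehat x=\widehat{x^*}$, and write $\xi=\xi_1+i\xi_2$ with $\xi_1=(\xi+J\xi)/2$ and $\xi_2=(\xi-J\xi)/(2i)$. Both $\xi_1,\xi_2$ lie in $L^2(M,\tau)_{\sa}$, because approximating $\xi$ by $\widehat{x_n}$ gives $\xi_1$ as the limit of $\widehat{\mathrm{Re}\,x_n}$, and similarly for $\xi_2$. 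Next, for self-adjoint $\eta$ (namely $\widehat{\one}$ or some $\widehat s$) and $\zeta\in L^2_{\sa}$, the inner product $\langle\zeta,\eta\rangle_2$ is real. This follows by continuity from $\tau(yx)\in\RR$ for $x,y\in M_{\sa}$. Hence
\[
0=\langle\xi,\eta\rangle_2=\langle\xi_1,\eta\rangle_2+i\langle\xi_2,\eta\rangle_2
\]
splits into real and imaginary parts, and both $\langle\xi_1,\eta\rangle_2$ and $\langle\xi_2,\eta\rangle_2$ vanish. Orthogonality to $\widehat{\one}$ puts $\xi_1,\xi_2\in H_0(M)$. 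Since $B$ is a basis of $H_0(M)$, orthogonality to all of $B$ then forces $\xi_1=\xi_2=0$, so $\xi=0$.

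\emph{Trace vectors.} Each $s\in B$ and $\one$ is unitary, so Proposition~\ref{prop:trace-vectors-unitaries} (or the one-line computation $\tau(u^*xu)=\tau(x)$) shows that every vector in the family is a trace vector.
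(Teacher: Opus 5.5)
Your proof is correct and follows essentially the same route as the paper. Orthonormality comes from the reality of $\tau(ts)$ for self-adjoint $s,t$, completeness from the fact that $L^2(M,\tau)$ is the complexification of $L^2(M,\tau)_{\sa}$, and the trace-vector property from $\tau(u^*xu)=\tau(x)$. The only difference is cosmetic: you prove completeness by showing the orthogonal complement is zero via the conjugation $J\widehat{x}=\widehat{x^*}$, whereas the paper shows that the closed complex span contains $L^2(M,\tau)_{\sa}$, hence contains $M=M_{\sa}+iM_{\sa}$, which is dense.
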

	
	\begin{proof}
	Clearly,	for self-adjoint $a,b\in M$, $\tau(ba)$ is real.
		Thus real orthogonality inside $L^2(M,\tau)_{\sa}$ is the same as complex
		Hilbert-space orthogonality for these vectors.  Since
		\(B\) is an orthonormal basis of \(H_0(M)\) and \(B\subset\mathcal S_0(M)\),
		the family \(\{\one\}\cup B\) is orthonormal in \(L^2(M,\tau)\).
		
		Let \(\mathcal L\) be the closed complex linear span of \(\{\one\}\cup B\).
		Since
		\[
		L^2(M,\tau)_{\sa}
		=
		\RR\one\oplus H_0(M)
		=
		\overline{\spanop_{\RR}}\bigl(\{\one\}\cup B\bigr),
		\]
		we have \(L^2(M,\tau)_{\sa}\subset \mathcal L\).  Hence
		\(M_{\sa}\subset\mathcal L\), and therefore
		\[
		M=M_{\sa}+iM_{\sa}\subset\mathcal L .
		\]
		Since \(M\) is dense in \(L^2(M,\tau)\), it follows that
		\(\mathcal L=L^2(M,\tau)\).  Thus \(\{\one\}\cup B\) is an orthonormal
		basis of the complex Hilbert space \(L^2(M,\tau)\).
		
		If $s=s^*=s^{-1}\in M$, then for every $x\in M$,
		\[
		\langle xs,s\rangle_2=\tau(sxs)=\tau(xss)=\tau(x).
		\]
		Thus every self-adjoint unitary is a trace vector, and so is $\one$.
	\end{proof}

	\section{A relative norming lemma under small-density constraints}
	\label{sec:relative-norming}
	
Throughout Sections~\ref{sec:relative-norming}--\ref{sec:nonseparable-main},
we assume that \(M\) is a type \(\mathrm{II}_1\) factor equipped with a
faithful normal tracial state \(\tau\), and we set
\[
\kappa=\dens L^2(M,\tau)>\aleph_0.
\]
	All von Neumann subalgebras are understood to be unital, except that corners such as $eMe$ are taken with their natural unit $e$.  We use density character in the Hilbert norm: $\dens L^2(M)$ is the least cardinality of an $L^2$-dense subset.

The next lemma records elementary \(L^2\)-closure and cardinal facts.  More precisely, parts~(i) and~(ii) are consequences of
Kaplansky's density theorem 	\cite[p.~82, Theorem~II.4.8]{Tak02} in the tracial GNS representation, part~(iii)
is the \(L^2\)-implementation of the trace-preserving conditional
expectation, and part~(iv) is a formal consequence of part~(iii).

\begin{lem}
	\label{lem:l2-cardinal-facts}
	Let \(P\) be a finite von Neumann algebra equipped with a faithful
	normal tracial state \(\tau\).
	
	\begin{enumerate}[label=(\roman*)]
		\item Let \(A_0\subset P\) be a unital \(C^*\)-subalgebra and let $
		A=A_0''=W^*(A_0).$
		If \(A_0\) has norm density at most \(\mu\), then
		\(L^2(A,\tau)\) has density at most
		\(\max\{\mu,\aleph_0\}\). 
		
		\item Let \((P_i)_{i\in I}\) be a directed increasing family of
		von Neumann subalgebras of \(P\), and suppose that
		$P_0=W^*\Bigl(\bigcup_{i\in I}P_i\Bigr).$
		Then
$
		L^2(P_0,\tau)
		=
		\overline{\bigcup_{i\in I}L^2(P_i,\tau)}^{\|\cdot\|_2}.
$
		
		\item Let \(Q\subset P\) be a von Neumann subalgebra.  If
		\(x\in P\) satisfies
$
		\widehat{x}\in L^2(Q,\tau),
$
		then \(x\in Q\).
		
		\item If \(Y\subset P_{\sa}\) is \(L^2\)-dense in
		\(L^2(P,\tau)_{\sa}\), then
	$	W^*(Y)=P.$
	\end{enumerate}
\end{lem}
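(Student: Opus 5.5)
I will prove the four parts in order, using (i)–(ii) as density/Kaplansky arguments, (iii) as a property of the $L^2$-extension of $E_Q$, and deducing (iv) from (iii).

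For (i), let $D\subset A_0$ be norm dense with $|D|\le\mu$. Then $\widehat D$ is $\|\cdot\|_2$-dense in $\widehat{A_0}$, since $\|x\|_2\le\|x\|_\infty$. By Kaplansky's density theorem \cite[Theorem~II.4.8]{Tak02}, every $a$ in the unit ball of $A=A_0''$ is a strong-operator limit of a net $(a_j)$ from the unit ball of $A_0$. In the GNS representation, $\|a_j-a\|_2=\|(a_j-a)\widehat{\one}\|$, so strong convergence gives $\|\cdot\|_2$-convergence. Hence $\widehat{A_0}$ is dense in $\widehat A$, which is dense in $L^2(A,\tau)$. If $\mu$ is finite, we replace $D$ by its rational $*$-span, which is countable. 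This gives the bound $\max\{\mu,\aleph_0\}$.

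For (ii), apply the same argument to the norm closure $A_0$ of the directed union $\bigcup_i P_i$. Since the family is directed, this union is a unital $*$-algebra and $A_0$ is a $C^*$-algebra, with $A_0''=P_0$ by the bicommutant theorem. Kaplansky then shows that $\widehat{\bigcup_i P_i}$ is $\|\cdot\|_2$-dense in $L^2(P_0,\tau)$. The reverse inclusion is clear, so the two closures agree.

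For (iii), use that the orthogonal projection $e_Q$ onto $L^2(Q,\tau)$ satisfies $e_Q\widehat x=\widehat{E_Q(x)}$. If $\widehat x\in L^2(Q,\tau)$, then $\widehat x=e_Q\widehat x=\widehat{E_Q(x)}$. Since $\tau$ is faithful, the map $x\mapsto\widehat x$ is injective on $P$, so $x=E_Q(x)\in Q$.

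For (iv), put $Q=W^*(Y)\subset P$. Then $Y\subset Q_{\sa}$, so $L^2(Q,\tau)\supset\overline{\widehat Y}\supset L^2(P,\tau)_{\sa}$. For any $x\in P$, write $x=a+ib$ with $a,b\in P_{\sa}$. Then $\widehat x=\widehat a+i\widehat b\in L^2(Q,\tau)$, since $L^2(Q,\tau)$ is a complex subspace. By (iii), $x\in Q$, so $Q=P$.

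The only delicate point is the Kaplansky step in (i): it needs strong density of the unit ball of $A_0$ in the unit ball of $A$, together with the fact that strong convergence on the cyclic vector $\widehat{\one}$ is exactly $\|\cdot\|_2$-convergence. The rest is routine.
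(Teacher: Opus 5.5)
Your proposal is correct and follows the paper's proof essentially step for step. You use Kaplansky density, together with the fact that strong convergence at $\widehat{\one}$ is $\|\cdot\|_2$-convergence, for (i) and (ii); the identity $e_Q\widehat{x}=\widehat{E_Q(x)}$ plus faithfulness of $\tau$ for (iii); and (iii) combined with $P=P_{\sa}+iP_{\sa}$ for (iv). The differences are cosmetic: in (i) you use transitivity of density rather than the paper's set $\bigcup_n nD$ with $D$ in the unit ball, and in (iii) you take $e_Q\widehat{x}=\widehat{E_Q(x)}$ as known, whereas the paper verifies it from trace preservation and $Q$-bimodularity of $E_Q$.
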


\begin{proof}
	We regard \(P\) in its faithful tracial GNS representation on
	\(L^2(P,\tau)\), acting by left multiplication.  Since this
	representation is faithful and normal, it preserves the von Neumann
	algebras generated by the subalgebras under consideration.
	
	For (i), put
$
	\kappa=\max\{\mu,\aleph_0\}.
$
	The unit ball \((A_0)_1\) has a norm-dense subset
	\(D\subset (A_0)_1\) with
$
	|D|\leq \kappa.
$
	By Kaplansky's density theorem
	\cite[p.~82, Theorem~II.4.8]{Tak02}, the unit ball of \(A_0\) is
	strong-\(*\) dense in the unit ball of \(A\).  Since norm
	convergence implies strong-\(*\) convergence, \(D\) is itself
	strong-\(*\) dense in the unit ball of \(A\).
	Thus, for every \(a\in A\) with \(\|a\|\leq 1\), there is a net
	\((d_\alpha)_\alpha\) in \(D\) such that \(d_\alpha\to a\) strongly.
	In the tracial GNS representation,
	\[
	\|d_\alpha-a\|_2
	=
	\|(d_\alpha-a)\widehat{1}\|_{L^2(P,\tau)}
	\longrightarrow 0.
	\]
	It follows that \(D\) is \(L^2\)-dense in the unit ball of \(A\).
	Consequently,
$
	\bigcup_{n\geq 1} nD
$
	is \(L^2\)-dense in \(A\), and hence in \(L^2(A,\tau)\).  Since this
	set has cardinal at most \(\kappa\), the desired density estimate
	follows.
	
	For (ii), set
$
	B=\overline{\bigcup_{i\in I}P_i}^{\|\cdot\|}.
$
	Directedness ensures that \(\bigcup_iP_i\) is a unital
	\(*\)-subalgebra, so \(B\) is a \(C^*\)-algebra, and
$
	B''=P_0.
$
	Put
$
	\mathcal H
	=
	\overline{\bigcup_{i\in I}L^2(P_i,\tau)}^{\|\cdot\|_2}
	\subset L^2(P_0,\tau).
$
	Since \(\bigcup_iP_i\) is norm dense in \(B\), and since
	\(\|x\|_2\leq \|x\|\), we have
$
	\widehat{B}\subset \mathcal H.
$
	By Kaplansky's density theorem \cite[p.~82, Theorem~II.4.8]{Tak02}, the unit ball of \(B\) is strongly
	dense in the unit ball of \(P_0\).  Strong convergence in the
	tracial GNS representation implies convergence on the vector
	\(\widehat{1}\), and hence \(L^2\)-convergence.  Therefore
	\(\widehat{B}\) is \(L^2\)-dense in \(L^2(P_0,\tau)\).  Since
	\(\mathcal H\) is closed and contains \(\widehat{B}\), it follows
	that
	\[
	L^2(P_0,\tau)\subset \mathcal H.
	\]
	The reverse inclusion is immediate, and hence
	\[
	L^2(P_0,\tau)
	=
	\overline{\bigcup_{i\in I}L^2(P_i,\tau)}^{\|\cdot\|_2}.
	\]
	
	For (iii), let
$
	E_Q:P\to Q
$
	be the unique \(\tau\)-preserving normal conditional expectation.
	Its existence follows from the finite tracial case of Takesaki's
	conditional-expectation theorem
	\cite[Section~3, p.~309]{Tak72}; see also
	\cite[Proposition~V.2.36]{Tak02}.
	For \(x\in P\) and \(q\in Q\), trace preservation and
	\(Q\)-bimodularity give
	\[
	\begin{aligned}
		\left\langle
		\widehat{x-E_Q(x)},\widehat q
		\right\rangle
		&=
		\tau\bigl(q^*(x-E_Q(x))\bigr)\\
		&=
		\tau\!\left(
		E_Q\bigl(q^*(x-E_Q(x))\bigr)
		\right)\\
		&=
		0.
	\end{aligned}
	\]
	Hence
$
	e_Q\widehat{x}=\widehat{E_Q(x)},
$
	where
$
	e_Q:L^2(P,\tau)\to L^2(Q,\tau)
$
	denotes the orthogonal projection.  If
	\(\widehat{x}\in L^2(Q,\tau)\), then
$
	\widehat{x}
	=
	e_Q\widehat{x}
	=
	\widehat{E_Q(x)}.
$
	The faithfulness of \(\tau\) therefore yields
$
	x=E_Q(x)\in Q.
$
	
	Finally, (iv) is an immediate consequence of (iii).  Put
$
	Q=W^*(Y).
$
	The space \(L^2(Q,\tau)_{\sa}\) is a closed real subspace of
	\(L^2(P,\tau)_{\sa}\) containing
$
	\{\widehat y:y\in Y\}.
$
	Since \(Y\) is \(L^2\)-dense in \(L^2(P,\tau)_{\sa}\), it follows
	that
$
	L^2(Q,\tau)_{\sa}=L^2(P,\tau)_{\sa}.
$
	Thus, for every \(x\in P_{\sa}\), one has
	\(\widehat{x}\in L^2(Q,\tau)\), and part~(iii) gives \(x\in Q\).
	Hence \(P_{\sa}\subset Q\), and therefore \(P=Q=W^*(Y)\).
\end{proof}

We will only need the following form of the corner-density statement.
A stronger version, with no restriction on the density character, is proved
in Appendix~\ref{a:1}.

	\begin{lem}\label{lem:corner-density}
		Let $M$ be a $\mathrm{II}_1$ factor with $\dens L^2(M)=\kappa>\aleph_0$.  If $0\neq e\in M$ is a projection, then
		\[
		\dens L^2(eMe)=\kappa,
		\]
		where $eMe$ is equipped with its finite trace inherited from $\tau$.
	\end{lem}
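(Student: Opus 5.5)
The plan is to prove the two inequalities $\dens L^2(eMe)\le\kappa$ and $\dens L^2(eMe)\ge\kappa$ separately, using the factor property only for the second.

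\textbf{Upper bound.} The map $x\mapsto exe$ is the restriction to $M$ of the bounded operator $\xi\mapsto e\xi e$ on $L^2(M,\tau)$. This operator is the product of left multiplication by $e$ and the (bounded) right multiplication by $e$. Since $e\xi e\in L^2(eMe)$ and every element of $eMe$ is fixed, it maps $L^2(M,\tau)$ onto $L^2(eMe)$. The trace on $eMe$ is the restriction of $\tau$, and its $L^2$-norm is $\tau(e)^{-1/2}$ times $\|\cdot\|_2$ if one normalizes; either way the density character is unaffected. A continuous image of a space of density $\le\kappa$ has density $\le\kappa$, so $\dens L^2(eMe)\le\kappa$.

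\textbf{Lower bound.} Here we use that $M$ is a $\mathrm{II}_1$ factor.

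1. Choose $n\in\NN$ with $\tau(e)\ge 1/n$.
2. Since $M$ is a $\mathrm{II}_1$ factor, projections are classified by their trace. Take mutually orthogonal projections $e_1,\dots,e_n$ with $\sum_k e_k=1$ and $\tau(e_k)=1/n$.
3. Each $e_k$ is then equivalent to a subprojection of $e$. Pick partial isometries $v_k\in M$ with $v_k^*v_k=e_k$ and $v_kv_k^*\le e$.
4. Every $x\in M$ decomposes as
\[
x=\sum_{j,k}e_jxe_k=\sum_{j,k}v_j^*\,(v_jxv_k^*)\,v_k,
\qquad v_jxv_k^*\in eMe.
\]
5. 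Let $D\subset eMe$ be $\|\cdot\|_2$-dense with $|D|=\dens L^2(eMe)$. For fixed $j,k$, the map $y\mapsto v_j^*yv_k$ satisfies $\|v_j^*yv_k\|_2\le\|y\|_2$; this holds because left and right multiplication by contractions are contractive on $L^2$. So it extends to a contraction $L^2(eMe)\to L^2(M)$.
6. Approximating each $v_jxv_k^*$ in $\|\cdot\|_2$ by elements of $D$ shows that the set
\[
\Bigl\{\textstyle\sum_{j,k}v_j^*d_{jk}v_k : d_{jk}\in D\Bigr\}
\]
is $L^2$-dense in $M$, hence in $L^2(M,\tau)$.
7. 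This set has cardinality at most $|D|^{n^2}$. If $D$ is infinite, this equals $|D|$, so $\kappa\le\dens L^2(eMe)$.

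\textbf{Main obstacle.} The remaining case is that $L^2(eMe)$ might be separable, or even finite-dimensional. Since $eMe$ is itself a $\mathrm{II}_1$ factor, it is diffuse, so $L^2(eMe)$ is infinite-dimensional. Hence $|D|\ge\aleph_0$ always. If $D$ were countable, step 7 would give $\kappa\le\aleph_0$, contradicting $\kappa>\aleph_0$. So the cardinal arithmetic $|D|^{n^2}=|D|$ applies, and $\dens L^2(eMe)=\kappa$.

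I expect the only points needing care to be these:
(a) the existence of the $v_k$, i.e.\ comparison of projections in a factor together with trace-determined equivalence in type $\mathrm{II}_1$;
(b) the $L^2$-contractivity of the two-sided multiplications used for the density transfer.
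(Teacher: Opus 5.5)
Your proof is correct and follows essentially the same route as the paper. The upper bound comes from $L^2(eMe)$ sitting inside $L^2(M)$, and the lower bound from writing $x=\sum_{j,k}e_jxe_k$, moving each block into $eMe$ via partial isometries between the $e_k$ and subprojections of $e$, and then counting cardinalities. The only cosmetic difference is that the paper handles the countable case directly through the estimate $\kappa\le\max\{\dens L^2(eMe),\aleph_0\}$, whereas you first rule out a finite $D$ using diffuseness of $eMe$; either works.
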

	
	\begin{proof}
		Clearly, $\dens L^2(eMe)\leq\kappa$.  Let $\mu=\dens L^2(eMe)$.  Choose $n\in\NN$ with $1/n\leq\tau(e)$.  In a $\mathrm{II}_1$ factor there are orthogonal projections $e_1,\ldots,e_n$ with $\sum_i e_i=\one$ and $\tau(e_i)=1/n$.  There are also projections $f_i\leq e$ with $\tau(f_i)=1/n$; these $f_i$ need not be mutually orthogonal.  Let $v_i$ be partial isometries with $v_i^*v_i=f_i$ and $v_iv_i^*=e_i$.  For $x\in M$,
		\[
		e_i x e_j=v_i\,(v_i^*xv_j)\,v_j^*,
		\qquad v_i^*xv_j\in f_iMf_j\subset eMe.
		\]
		Thus $M$ is contained in the finite linear span of the subspaces $v_i(eMe)v_j^*$.  Each of these subspaces has $L^2$-density at most $\mu$, so
		\[
		\kappa=\dens L^2(M)\leq\max\{\mu,\aleph_0\}.
		\]
		Since $\kappa>\aleph_0$, this inequality forces $\mu>\aleph_0$ and hence $\max\{\mu,\aleph_0\}=\mu$.  Therefore $\kappa\leq\mu$, and the reverse inequality already observed gives $\mu=\kappa$.
	\end{proof}
	Next, we establish the following small-constraint perturbation lemma.
	\begin{lem}\label{lem:perturb}
		Let $M$ be a $\mathrm{II}_1$ factor with $\dens L^2(M)=\kappa>\aleph_0$ and $\Gamma\subset M_{\sa}$ satisfy $|\Gamma|<\kappa$.  If $0\neq e\in M$ is a projection, then there exists a nonzero $h=h^*\in eMe$ such that
		\[
		\tau(\gamma h)=0,
		\qquad \gamma\in\Gamma.
		\]
	\end{lem}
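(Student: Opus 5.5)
Replace each constraint $\gamma\in\Gamma$ by its compression $e\gamma e$. If $h\in eMe$, then $h=ehe$, and by the trace property
\[
\tau(\gamma h)=\tau(\gamma\, ehe)=\tau(e\gamma e\, h).
\]
So it suffices to find a nonzero $h=h^*\in eMe$ with $\langle \widehat h,\widehat{e\gamma e}\rangle_2=0$ for all $\gamma\in\Gamma$. Each $e\gamma e$ is self-adjoint, so the pairing $\tau(e\gamma e\,h)=\tau((e\gamma e)^*h)$ is a real inner product in $L^2(eMe,\tau)_{\sa}$. The problem therefore becomes: the closed real subspace
\[
K=\overline{\spanop_{\RR}}\{\widehat{e\gamma e}:\gamma\in\Gamma\}\subset L^2(eMe,\tau)_{\sa}
\]
is proper, and some element of $(eMe)_{\sa}$ lies outside it.

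The key step is the cardinality comparison. The rational linear span of $\{e\gamma e\}$ is dense in $K$. Hence
\[
\dens K\le\max\{|\Gamma|,\aleph_0\}.
\]
By Lemma~\ref{lem:corner-density}, $\dens L^2(eMe)=\kappa$. Since $L^2(eMe)=L^2(eMe)_{\sa}+iL^2(eMe)_{\sa}$, this also gives
\[
\dens L^2(eMe,\tau)_{\sa}=\kappa.
\]
Because $|\Gamma|<\kappa$ and $\aleph_0<\kappa$, we get $\dens K<\kappa$. Therefore $K\neq L^2(eMe,\tau)_{\sa}$.

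The remaining step is to produce a bounded witness, not just an $L^2$ vector. Suppose every $\widehat x$ with $x\in(eMe)_{\sa}$ lay in $K$. Since $(eMe)_{\sa}$ is $L^2$-dense in $L^2(eMe,\tau)_{\sa}$, we would get $K=L^2(eMe,\tau)_{\sa}$, a contradiction. So choose $x\in(eMe)_{\sa}$ with $\widehat x\notin K$, and set
\[
h=x-E(x),
\]
where $E$ denotes orthogonal projection onto $K$. The danger is that $E(x)$ is only an $L^2$ vector and need not be bounded, so this $h$ might not lie in $M$.

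This is the main obstacle. The fix is to take orthogonal complements inside $eMe$ itself: I need a nonzero bounded self-adjoint $h\in eMe$ lying in $K^\perp$, which is a nonzero closed subspace of $L^2(eMe,\tau)_{\sa}$. First, enlarge $\Gamma$: the set $\Gamma'$ of rational-coefficient $*$-polynomials in $\{e\gamma e\}$ has $|\Gamma'|<\kappa$. By Lemma~\ref{lem:l2-cardinal-facts}(i), the von Neumann subalgebra
\[
Q=W^*(\{e\gamma e:\gamma\in\Gamma\}\cup\{e\})\subset eMe
\]
has $\dens L^2(Q)<\kappa$, and it contains every $e\gamma e$. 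Since $\dens L^2(eMe)=\kappa$, we have $Q\ne eMe$. Hence, by Lemma~\ref{lem:l2-cardinal-facts}(iii), some $x\in(eMe)_{\sa}$ satisfies $\widehat x\notin L^2(Q)$; if it were otherwise, $eMe\subset Q$. Now use the bounded conditional expectation and put
\[
h=x-E_Q(x).
\]
This $h$ lies in $(eMe)_{\sa}$ and is nonzero. It is orthogonal to $L^2(Q)\supset K$, so $\tau(e\gamma e\,h)=0$ for every $\gamma\in\Gamma$, and hence $\tau(\gamma h)=0$.
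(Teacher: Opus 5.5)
Your proof is correct and is essentially the paper's argument: a von Neumann algebra generated by fewer than $\kappa$ elements has $L^2$-density $<\kappa$, Lemma~\ref{lem:corner-density} then gives a bounded self-adjoint $x\in eMe$ outside it, and subtracting the trace-preserving conditional expectation gives the bounded witness $h$. The only variation is that you first compress the constraints via $\tau(\gamma h)=\tau(e\gamma e\,h)$ and work with $E_Q\colon eMe\to Q$ inside the corner, whereas the paper takes $D=W^*(\{e\}\cup\Gamma)\subset M$ and uses $e\in D$ with bimodularity, $E_D(eae)=eE_D(a)e$, to keep $a-E_D(a)$ in $eMe$.
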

	
	\begin{proof}
		Let $D=W^*(\{e\}\cup\Gamma)$.  If $\lambda=|\Gamma|$, then the unital
		$C^*$-algebra generated by $\{e\}\cup\Gamma$ has norm density at most
		$\max\{\lambda,\aleph_0\}<\kappa$, because rational-coefficient
		$*$-polynomials in the generators are norm dense in it.  By
		Lemma~\ref{lem:l2-cardinal-facts}(i),
		\[
		\dens L^2(D)<\kappa.
		\]
		Hence also $\dens L^2(eDe)<\kappa$.
		
		By Lemma~\ref{lem:corner-density},
		\[
		\dens L^2(eMe)=\kappa.
		\]
		Since
		$L^2(eMe)=L^2(eMe)_{\sa}+iL^2(eMe)_{\sa}$
		and similarly for \(eDe\), the real Hilbert space
		\(L^2(eMe)_{\sa}\) has density \(\kappa\), while
		\(L^2(eDe)_{\sa}\) has density \(<\kappa\).  Thus
		\(L^2(eDe)_{\sa}\) is a proper closed real subspace of
		\(L^2(eMe)_{\sa}\). Choose
$
		\eta=\eta^*\in L^2(eMe)_{\sa}\setminus L^2(eDe)_{\sa}.
$
		Since $(eMe)_{\sa}$ is $L^2$-dense in $L^2(eMe)_{\sa}$ and
		$L^2(eDe)_{\sa}$ is closed, we may choose
		\(a=a^*\in eMe\) so close to \(\eta\) in \(L^2\)-norm that
$
		\widehat a\notin L^2(eDe)_{\sa}.
$
		In particular,
$
		a\in eMe\setminus eDe .
$
		Let
$
		h=a-E_D(a),
$
		where $E_D:M\to D$ is the trace-preserving conditional expectation. Since $e\in D$ and $a=eae$, bimodularity of $E_D$ gives
		\[
		E_D(a)=E_D(eae)=eE_D(a)e\in eDe.
		\]
		Thus $h\in eMe$.  Moreover $h\neq0$, because otherwise $a=E_D(a)\in eDe$.
		
		Finally, for $\gamma\in\Gamma\subset D$, trace preservation and
		$D$-bimodularity of $E_D$ give
		\[
		\tau(\gamma a)
		=\tau(E_D(\gamma a))
		=\tau(\gamma E_D(a)).
		\]
		Therefore $\tau(\gamma h)=0$ for every $\gamma\in\Gamma$.
	\end{proof}
Lemma~\ref{lem:large-relative-norming} is the cardinal-density counterpart of the
finite-dimensional norming lemma used in the separable proof of
Kadison's problem~\cite[Lemma~2]{HTZ26}.  The latter is obtained from
the finite-constraint noncommutative Lyapunov theorem of
Akemann--Weaver~\cite[Theorem~3.1]{AW03}, which belongs to the
operator-algebraic Lyapunov theory developed in~\cite{AA91} and is
closely connected with the facial geometry of positive contractions
studied in~\cite{AP92}.

There is, however, an essential difference.  In Lemma~\ref{lem:large-relative-norming}, the
orthogonality conditions consist of all constraints coming from a von
Neumann subalgebra \(N\) of smaller \(L^2\)-density, together with an
additional family \(\Gamma\) of cardinality strictly smaller than
\(\kappa=\dens L^2(M)\).  Thus the corresponding affine slice may have
infinite, and even uncountable, codimension, so
\cite[Theorem~3.1]{AW03} is not directly applicable.  The new point is
that the strict density gap produces a nonzero perturbation in every
nontrivial spectral corner while annihilating all the prescribed
constraints.  This forces an extreme point of the affine slice to be a
projection.  
	\begin{lem}\label{lem:large-relative-norming}
Let $M$ be a $\mathrm{II}_1$ factor with $\dens L^2(M)=\kappa>\aleph_0$. Let
$N\subseteq M$ be a von Neumann subalgebra satisfying
$\dens L^2(N)<\kappa$, and let $\Gamma\subseteq M_{\mathrm{sa}}$ be a set of
cardinality less than $\kappa$.
Suppose that $0\neq r=r^*\in M$ satisfies $E_N(r)=0$ and $\tau(\gamma r)=0$ for every $\gamma\in\Gamma$.
		Then there exists $s\in\mathcal S_0(M)$ such that
		\[
		E_N(s)=0,
		\qquad
		\tau(\gamma s)=0\quad(\gamma\in\Gamma),\qquad
	\text{and}\qquad
		\langle r,s\rangle_2=\|r\|_2^2/\|r\|_\infty.
		\]
	\end{lem}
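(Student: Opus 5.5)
Write $\|r\|_\infty$ for the operator norm of $r$. The plan is to realize $s$ as $2p-\one$ for a projection $p$ of trace $1/2$ lying in a suitable affine slice, obtained as an extreme point of a weak$^*$-compact convex set.

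\textbf{The convex set.} Consider
\[
K=\Bigl\{x\in M:\ 0\le x\le \one,\ \tau(x)=\tfrac12,\ E_N(2x-\one)=0,\ \tau\bigl(\gamma(2x-\one)\bigr)=0\ (\gamma\in\Gamma)\Bigr\}.
\]
Each condition is $\sigma$-weakly closed, since $E_N$ is normal and $\tau(\gamma\,\cdot)$ is normal. The positive part of the unit ball is $\sigma$-weakly compact, so $K$ is a compact convex set. It is nonempty: $x=\tfrac12(\one+r/\|r\|_\infty)$ lies in $K$, because $\tau(r)=\tau(E_N(r))=0$. The functional $\phi(x)=\tau(r(2x-\one))=2\tau(rx)$ is $\sigma$-weakly continuous and affine. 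At the point above it equals $\|r\|_2^2/\|r\|_\infty$.

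\textbf{Extreme points are projections.} This is the key step. Let $x$ be an extreme point of $K$ and suppose $x$ is not a projection. Then for some $\epsilon>0$ the spectral projection $e=\chi_{[\epsilon,1-\epsilon]}(x)$ is nonzero. Apply Lemma~\ref{lem:perturb} to the corner $eMe$ with the constraint family
\[
\Gamma'=\Gamma\cup\{\one\}\cup\mathcal D,
\]
where $\mathcal D$ is an $L^2$-dense subset of $N_{\sa}$ of cardinality $\max\{\dens L^2(N),\aleph_0\}<\kappa$. Since $\kappa>\aleph_0$, we still have $|\Gamma'|<\kappa$; here the strict density gap is used. This produces a nonzero $h=h^*\in eMe$ with $\tau(\gamma h)=0$ for all $\gamma\in\Gamma'$. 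By $L^2$-density of $\mathcal D$ and continuity, $\tau(nh)=0$ for all $n\in N_{\sa}$, hence for all $n\in N$, which gives $E_N(h)=0$; the constraint $\one\in\Gamma'$ gives $\tau(h)=0$. Normalize so that $\|h\|\le\epsilon$. Since $exe\ge\epsilon e$ and $e(\one-x)e\ge \epsilon e$, and $e$ commutes with $x$, both $x\pm h$ satisfy $0\le x\pm h\le\one$ and lie in $K$. Then $x=\tfrac12\bigl((x+h)+(x-h)\bigr)$ contradicts extremality, so every extreme point of $K$ is a projection.

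\textbf{Conclusion.} By Krein--Milman (or Bauer's maximum principle), the continuous affine functional $\phi$ attains its maximum over $K$ at an extreme point $p$, so $\phi(p)\ge\|r\|_2^2/\|r\|_\infty$. To get the exact value rather than just an inequality, observe that the set of points of $K$ where $\phi$ equals $c:=\|r\|_2^2/\|r\|_\infty$ is itself convex, and nonempty since it contains $\tfrac12(\one+r/\|r\|_\infty)$. So intersect $K$ with the hyperplane $\{\phi=c\}$. In the perturbation step, also add $r$ to $\Gamma'$ (still fewer than $\kappa$ constraints), so that $h$ satisfies $\tau(rh)=0$ and the perturbations $x\pm h$ stay in the slice. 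The slice $K\cap\{\phi=c\}$ is nonempty, compact and convex, so by Krein--Milman it has an extreme point $p$. The same argument shows $p$ is a projection. Setting $s=2p-\one$, we get $s=s^*=s^{-1}$, $\tau(s)=0$ from $\tau(p)=\tfrac12$, $E_N(s)=0$, $\tau(\gamma s)=0$ for $\gamma\in\Gamma$, and $\langle r,s\rangle_2=\tau(rs)=c$. Reality of the inner product holds because both $r$ and $s$ are self-adjoint.

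The main obstacle is the perturbation step: the constraints coming from $N$ are infinitely many, and one must reduce them to fewer than $\kappa$ scalar conditions via a dense subset of $N$. Lemma~\ref{lem:perturb} then supplies the needed $h$ in every nonzero spectral corner.
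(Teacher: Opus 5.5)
Your proposal is correct and follows essentially the same route as the paper. The paper's convex set $K$ is exactly your slice $K\cap\{\phi=c\}$: it imposes the $N$-constraints through the dense set $\mathcal D$ rather than through $E_N(2x-\one)=0$, which is equivalent, and $\tau(x)=\tfrac12$ is automatic from $\one\in N$. As in your argument, an extreme point is shown to be a projection by applying Lemma~\ref{lem:perturb} to $\mathcal D\cup\Gamma\cup\{r\}$ in a spectral corner $1_{[\varepsilon,1-\varepsilon]}(p)$, and then $s=2p-\one$. Your preliminary maximization of $\phi$ via Bauer's principle is unnecessary, and you rightly replace it by the slice, since the lemma requires equality.
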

	
	\begin{proof}
 Put $c=\|r\|_\infty$ and
$
		q=(\one+r/c)/2.
$
		Then $0\leq q\leq\one$.  Choose an $L^2$-dense subset $\mathcal D\subset N_{\sa}$ consisting of bounded operators such that
		\[
		|\mathcal D|\leq \max\{\dens L^2(N),\aleph_0\}<\kappa.
		\]
		Consider the set
		\[
		K=\left\lbrace 0\leq x=x^*\leq\one:\
		\begin{array}{l}
			\tau(yx)=\tau(yq)\quad (y\in\mathcal D),\\
			\tau(\gamma x)=\tau(\gamma q)\quad (\gamma\in\Gamma),\\
			\tau(rx)=\tau(rq)
		\end{array}
\right\rbrace.
		\]
		$K$ is nonempty, because $q\in K$.  It is ultraweakly compact and convex: indeed, it is the intersection of the ultraweakly compact positive unit ball of $M$ with ultraweakly closed affine hyperplanes defined by normal functionals.  By the Krein--Milman theorem~\cite[p.~142, Theorem~V.7.4]{Con90}, the set $K$ has an extreme point; fix one and denote it by $p$.
		
		We claim that $p$ is a projection.  Suppose not.  Since $0\leq p\leq1$ and
		$p$ is not a projection, the spectral projection of $p$ corresponding to the
		open interval $(0,1)$ is nonzero.  Since
		$(0,1)=\bigcup_{n=2}^\infty [1/n,1-1/n],$
		there is some $0<\varepsilon<1/2$ such that
$
		e=1_{[\varepsilon,1-\varepsilon]}(p)
$
		is nonzero.
		
		Apply Lemma~\ref{lem:perturb} to the set
$
		\mathcal D\cup\Gamma\cup\{r\}
$
		and to the projection $e$.  This set has cardinal $<\kappa$.  We get a
		nonzero $h=h^*\in eMe$ such that
		\[
		\tau(yh)=0\quad(y\in\mathcal D),\qquad
		\tau(\gamma h)=0\quad(\gamma\in\Gamma),\qquad
		\tau(rh)=0.
		\]
		After replacing $h$ by a sufficiently small nonzero real scalar multiple, we
		may assume $\|h\|_\infty<\varepsilon$.  Since \(e\) is a spectral projection
		of \(p\), it commutes with \(p\).  Also \(h=ehe\).  Hence, with respect to
		the decomposition \(eL^2(M)\oplus(1-e)L^2(M)\), the operators \(p\pm h\)
		are block diagonal:
		\[
		p\pm h=(pe\pm h)\oplus p(1-e).
		\]
		On \(eL^2(M)\) we have
		\[
		\varepsilon e\leq pe\leq(1-\varepsilon)e.
		\]
		Since \(h=h^*\in eMe\), we have
		\[
		-\|h\|_\infty e\le h\le \|h\|_\infty e.
		\]
		As \(\|h\|_\infty<\varepsilon\), it follows that
		\[
		0\leq pe\pm h\leq e.
		\]
		On \((1-e)L^2(M)\), the operator \(p(1-e)\) is a positive contraction.
		Therefore
		\[
		0\leq p\pm h\leq1.
		\]
		The displayed orthogonality relations show that $p+h$ and $p-h$ satisfy all
		defining affine constraints of $K$.  Thus $p\pm h\in K$, and
		\[
		p=\frac12(p+h)+\frac12(p-h)
		\]
		is a nontrivial convex decomposition in $K$, contradicting the extremality
		of $p$.  Hence $p$ is a projection.
		
		Put $s=2p-\one$.  For $y\in\mathcal D$,
		\[
		\tau(yq)=\frac12\tau(y)+\frac{1}{2c}\tau(yr)=\frac12\tau(y),
		\]
		since $E_N(r)=0$. Thus $\tau(ys)=0$ for every $y\in\mathcal D$.  If \(z\in N_{\mathrm{sa}}\), choose \(y_i\in\mathcal D\) with
		\(\|y_i-z\|_2\to0\). Since \(s\in M\), the functional
$
		w\longmapsto\tau(ws)
$
		is \(L^2\)-continuous. Hence \(\tau(zs)=0\) for every
		\(z\in N_{\mathrm{sa}}\), and therefore
		\[
		s\perp L^2(N,\tau)_{\mathrm{sa}}.
		\]
		Since \(E_N(s)\) is the \(L^2\)-orthogonal projection of \(s\) onto
		\(L^2(N,\tau)_{\mathrm{sa}}\), it follows that
		\[
		E_N(s)=0.
		\]  Similarly, for \(\gamma\in\Gamma\),
		\[
		\tau(\gamma q)=\frac12\tau(\gamma)+\frac{1}{2c}\tau(\gamma r)
		=\frac12\tau(\gamma),
		\]
		so \(\tau(\gamma s)=0\).  Since \(\one\in N\), the identity \(E_N(s)=0\)
		also gives \(\tau(s)=0\).  Therefore \(s\in\mathcal S_0(M)\).
		
		Finally, using the convention that the inner product is linear in the first
		variable and using traciality,
		\[
		\begin{aligned}
			\langle r,s\rangle_2
			&=\tau(sr)
			=\tau(rs) \\
			&=\tau(r(2p-\one)) \\
			&=2\tau(rp)-\tau(r) \\
			&=2\tau(rq)-\tau(r) \\
			&=2\left(\frac12\tau(r)+\frac{1}{2c}\tau(r^2)\right)-\tau(r)
			=\frac{\|r\|_2^2}{\|r\|_\infty}.
		\end{aligned}
		\]
	\end{proof}
	
	\section{Completed blocks and certified backgrounds}\label{sec:certified-backgrounds}
	
By \eqref{eq:conditional-expectation-orthogonality}, orthogonality relative
to a von Neumann subalgebra is naturally expressed through the
trace-preserving conditional expectation.  This point of view is closely related to Popa's theory
of orthogonal subalgebras~\cite{Pop83}.  There is also a conceptual
connection with Pimsner--Popa bases~\cite{PP86} and with the bounded
homogeneous orthonormal bases of
Ioana--Peterson--Popa~\cite[Definition~4.1]{IPP08}, since all of these
notions use bounded elements of a von Neumann algebra to represent
complete orthogonal systems.

The notion used here is nevertheless different from those
module-theoretic bases.  We impose scalar \(L^2\)-orthogonality rather
than orthogonality for a subalgebra-valued inner product, we do not
assume finite index, and our basis elements are required to be
trace-zero symmetries.  The terminology below is introduced in order
to record precisely those Hilbert-space decompositions whose
orthogonal projections can still be evaluated by bounded elements of
the ambient algebra.

	\begin{defin}[Completed block]\label{def:block}
		A \emph{completed block} in $M$ is a triple
	$	\mathfrak B=(A,Q,T),$
		where $A\subset Q\subset M$ are von Neumann subalgebras and
		$T\subset\mathcal S_0(M)$ is an orthonormal family such that
		\[
		L^2(Q)_{\sa}
		=
		L^2(A)_{\sa}\oplus\overline{\spanop_{\RR}}T .
		\]
		The sum is an orthogonal direct sum of real Hilbert spaces.
		
		If a von Neumann subalgebra $N\subset M$ is specified, we say that
		$\mathfrak B$ is a \emph{completed block over $N$} if, in addition,
		\[
		A\subset N
		\qquad\text{and}\qquad
		E_N(t)=0\quad(t\in T).
		\]
		In that case $T\perp L^2(N)_{\sa}$, and in particular
		$T\perp L^2(A)_{\sa}$.
		
		In either case, the displayed decomposition implies $T\subset Q$: indeed
		$t\in T$ is an element of $M$ whose $L^2$-vector belongs to $L^2(Q)$, so
		Lemma~\ref{lem:l2-cardinal-facts}(iii) gives $t\in Q$.
	\end{defin}
Although the following identity is elementary, it is the basic
boundedness mechanism used throughout the certificate construction.
It expresses the orthogonal projection onto the complement of a
nested subalgebra by trace-preserving conditional expectations, and
therefore keeps that projection inside the ambient von Neumann
algebra.
	\begin{lem}[Projection formula]\label{lem:block-projection}
		If $\mathfrak B=(A,Q,T)$ is a completed block, then for every $x=x^*\in M$ the orthogonal projection of $x$ onto $\overline{\spanop_{\RR}}T$ is
\begin{equation}\label{eq:block-projection-formula}
	P_T^{\mathfrak B}(x)
	=
	E_Q(x)-E_A(E_Q(x)).
\end{equation}
		In particular $P_T^{\mathfrak B}(x)\in M_{\sa}$.
	\end{lem}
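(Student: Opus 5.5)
The plan is to identify the three orthogonal projections involved in the block decomposition and then compose them. Let $e_Q$ and $e_A$ denote the orthogonal projections of $L^2(M)$ onto $L^2(Q)$ and $L^2(A)$. By Lemma~\ref{lem:l2-cardinal-facts}(iii) and its proof, $e_Q\widehat{x}=\widehat{E_Q(x)}$ and $e_A\widehat{y}=\widehat{E_A(y)}$ for bounded $x,y$. For $x=x^*$, $E_Q(x)$ is again self-adjoint, since conditional expectations are $*$-preserving. So restricting to self-adjoint elements keeps us inside the real Hilbert space $L^2(M)_{\sa}$. On that space, $e_Q$ and $e_A$ restrict to the real orthogonal projections onto $L^2(Q)_{\sa}$ and $L^2(A)_{\sa}$; their real-orthogonality agrees with complex orthogonality because $\tau(ba)\in\RR$ for self-adjoint $a,b$, as in Lemma~\ref{lem:trace-zero-reduction}.

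Set $K=\overline{\spanop_{\RR}}T$. Since $K\subset L^2(Q)_{\sa}\subset L^2(M)_{\sa}$, the projection of $\widehat x$ onto $K$ factors through $L^2(Q)_{\sa}$. Concretely, $\widehat x-e_Q\widehat x$ is orthogonal to $L^2(Q)_{\sa}\supset K$. Hence $P_K\widehat x=P_K e_Q\widehat x=P_K\widehat{E_Q(x)}$.

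Inside $L^2(Q)_{\sa}$, the defining orthogonal decomposition $L^2(Q)_{\sa}=L^2(A)_{\sa}\oplus K$ gives that $P_K$ restricted to $L^2(Q)_{\sa}$ equals $\mathrm{id}-e_A$. Applying this to $\xi=\widehat{E_Q(x)}\in L^2(Q)_{\sa}$ yields
\[
P_T^{\mathfrak B}(x)=\widehat{E_Q(x)}-\widehat{E_A(E_Q(x))},
\]
which is \eqref{eq:block-projection-formula}. The right-hand side is the vector of the bounded self-adjoint element $E_Q(x)-E_A(E_Q(x))\in Q\subset M$, so $P_T^{\mathfrak B}(x)\in M_{\sa}$.

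No step is a genuine obstacle. The only point needing care is the bookkeeping between real and complex Hilbert spaces: one must check that the complex projections $e_Q,e_A$ map $L^2(M)_{\sa}$ into the corresponding self-adjoint parts. This follows from $E_Q(x^*)=E_Q(x)^*$ together with $L^2$-density of $M_{\sa}$ in $L^2(M)_{\sa}$ and continuity of the projections.
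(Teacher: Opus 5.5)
Your proof is correct and follows the paper's route: identify $E_Q$ and $E_A$ with the $L^2$-orthogonal projections, and use the block decomposition to see that $\mathrm{id}-e_A$ on $L^2(Q)_{\sa}$ is the projection onto $\overline{\spanop_{\RR}}T$. Your extra bookkeeping (factoring $P_K$ through $e_Q$, and checking that real and complex orthogonality agree on self-adjoint vectors) makes explicit what the paper's two-line proof leaves implicit.
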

	
	\begin{proof}
		The map $E_Q$ is the $L^2$-orthogonal projection onto $L^2(Q)$ and $E_A$ is the $L^2$-orthogonal projection from $L^2(Q)$ onto $L^2(A)$.  The completed-block decomposition identifies $L^2(Q)_{\sa}\ominus L^2(A)_{\sa}$ with $\overline{\spanop_{\RR}}T$.
	\end{proof}
	
For an arbitrary, possibly uncountable, orthonormal family
\(\Omega\subset L^2(M)_{\sa}\), the Hilbert-space projection onto
\(\overline{\spanop_{\RR}}\Omega\) is always well defined, but its value
on an element of \(M_{\sa}\) need not a priori be represented by a
bounded element of \(M\).  The following recursive formalism is
introduced to retain precisely the additional von Neumann algebraic
data needed to guarantee this bounded realizability.
The construction is related to projectional skeletons
and projectional resolutions of the identity in nonseparable Banach
space theory~\cite{Kub09}: both organize a large space by compatible
bounded projections onto smaller pieces.  The present construction is
more rigid and more specialized.  Every projection occurring in a
certificate is required to be computable by finitely many
trace-preserving conditional expectations, and its value on
\(M_{\sa}\) must remain in \(M_{\sa}\).  The word ``finite'' below
refers only to the depth of the certificate tree; the individual
layers may have arbitrary cardinality.
	
	\begin{defin}[Finite-layer certificates and certified backgrounds]\label{def:certified}
		A \emph{finite-layer certificate} $\mathfrak c$ is defined recursively,
		together with its underlying orthonormal family
		\[
		\Omega(\mathfrak c)\subset\mathcal S_0(M)
		\]
		and, for every $x=x^*\in M$, a bounded self-adjoint element
		$P_{\mathfrak c}(x)\in M_{\sa}$.
		
		\smallskip
		
		\noindent\emph{The empty certificate.}
		There is an empty certificate $\varnothing$ with
		\[
		\Omega(\varnothing)=\varnothing,
		\qquad
		P_{\varnothing}(x)=0 .
		\]
		
		\smallskip
		
		\noindent\emph{Adding one layer.}
		Suppose that $\mathfrak c^-$ is a finite-layer certificate with
		\[
		\Omega^-=\Omega(\mathfrak c^-).
		\]
		Suppose also that $\mathfrak c_\Theta$ is a finite-layer certificate with
		\[
		\Theta=\Omega(\mathfrak c_\Theta)\subset\Omega^- .
		\]
		Let $U\subset\mathcal S_0(M)$ be an orthonormal family which is orthogonal to
		$\Omega^-$.  Assume that there are von Neumann subalgebras
		\[
		C\subset R\subset M
		\]
		such that
		\[
		L^2(R)_{\sa}
		=
		L^2(C)_{\sa}\oplus
		\overline{\spanop_{\RR}}(\Theta\cup U).
		\]
		Then the finite data
		\[
		\mathfrak c=
		\bigl(\mathfrak c^-,\mathfrak c_\Theta,C,R,U\bigr)
		\]
		is a finite-layer certificate with underlying family
		\[
		\Omega(\mathfrak c)=\Omega^-\dot\cup U.
		\]
		We say that $U$ is the \emph{new layer} of $\mathfrak c$, and that it is a
		layer over the certified subbackground $(\Theta,\mathfrak c_\Theta)$.
		
		For $x=x^*\in M$, define
		\[
		P_U^{\mathfrak c}(x)
		=
		\bigl(E_R(x)-E_C(E_R(x))\bigr)-P_{\mathfrak c_\Theta}(x),
		\]
		and
		\[
		P_{\mathfrak c}(x)
		=
		P_{\mathfrak c^-}(x)+P_U^{\mathfrak c}(x).
		\]
		
		A \emph{finite-layer certified background} is a pair
		$(\Omega,\mathfrak c)$, where $\mathfrak c$ is a finite-layer certificate and
		$\Omega=\Omega(\mathfrak c)$.  When the certificate is fixed, we suppress it
		from the notation and write simply $P_\Omega(x)$ for $P_{\mathfrak c}(x)$.
		
		A subfamily $\Theta\subset\Omega$ is called a \emph{certified subbackground}
		when it is equipped with some finite-layer certificate
		$\mathfrak c_\Theta$ satisfying $\Omega(\mathfrak c_\Theta)=\Theta$.
		This certificate need not be a literal subcertificate of the fixed certificate
		for $\Omega$; it is part of the finite data being used.  The adjective
		\emph{finite-layer} refers only to the finite depth of the certificate.  The
		individual layers may have arbitrary cardinality.
	\end{defin}
	To make the inductions on certificates formally unambiguous, we attach
	to each certificate a finite complexity rank.  This rank is only a
	bookkeeping device for the recursive syntax introduced above and does
	not impose any cardinality restriction on the layers.
	\begin{rem}\label{rem:certificate-complexity}
		We define the complexity of a finite-layer certificate recursively by
	$
		\ell(\varnothing)=0,
	$
		and, for
	$
		\mathfrak c=(\mathfrak c^-,\mathfrak c_\Theta,C,R,U),
		$
		by
		\[
		\ell(\mathfrak c)
		=
		1+\ell(\mathfrak c^-)+\ell(\mathfrak c_\Theta).
		\]
		All inductions on finite-layer certificates below are taken with respect to
		this finite complexity.
	\end{rem}
	The next lemma is the verification statement for the certificate
	formalism.  Its proof is a finite induction based on
	Lemma~\ref{lem:block-projection}, but its conclusion is essential:
	the recursively defined formula is not merely a formal expression in
	conditional expectations; it is exactly the Hilbert-space projection
	onto the underlying certified family and is represented by an element
	of \(M_{\sa}\).
	\begin{lem}\label{lem:certified-projection}
		Let $\Omega$ be a finite-layer certified background.  Then for every $x=x^*\in M$, the Hilbert-space projection $P_\Omega x$ onto $\overline{\spanop_{\RR}}\Omega$ belongs to $M_{\sa}$.  More precisely, $P_\Omega x$ is a finite sum of expressions obtained from $x$ by trace-preserving conditional expectations appearing in the certificate.
	\end{lem}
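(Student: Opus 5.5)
We argue by induction on the complexity $\ell(\mathfrak c)$ of Remark~\ref{rem:certificate-complexity}, proving the stronger claim that for every finite-layer certificate $\mathfrak c$ and every $x=x^*\in M$, the recursively defined element $P_{\mathfrak c}(x)\in M_{\sa}$ coincides, as an $L^2$-vector, with the orthogonal projection of $\widehat x$ onto $\overline{\spanop_{\RR}}\Omega(\mathfrak c)$. Membership in $M_{\sa}$ and the description as a finite sum of conditional-expectation expressions are then automatic from the recursive definition: each step adds a term $E_R(x)-E_C(E_R(x))$ and subtracts $P_{\mathfrak c_\Theta}(x)$, both self-adjoint elements of $M$ because conditional expectations preserve adjoints. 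The finite complexity guarantees that the unfolding terminates after finitely many steps. The base case $\mathfrak c=\varnothing$ is trivial, since $\Omega=\varnothing$ and $P_\varnothing(x)=0$.

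For the inductive step, let $\mathfrak c=(\mathfrak c^-,\mathfrak c_\Theta,C,R,U)$. Both $\mathfrak c^-$ and $\mathfrak c_\Theta$ have strictly smaller complexity, so by induction $P_{\mathfrak c^-}(x)$ is the projection onto $\overline{\spanop_{\RR}}\Omega^-$ and $P_{\mathfrak c_\Theta}(x)$ is the projection onto $\overline{\spanop_{\RR}}\Theta$. Since $U\perp\Omega^-$ and $\Omega=\Omega^-\dot\cup U$, the projection onto $\overline{\spanop_{\RR}}\Omega$ is the sum of the projections onto $\overline{\spanop_{\RR}}\Omega^-$ and $\overline{\spanop_{\RR}}U$. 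It therefore suffices to show that $P_U^{\mathfrak c}(x)$ is the projection onto $\overline{\spanop_{\RR}}U$.

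Here $(C,R,\Theta\cup U)$ is a completed block: $\Theta\cup U$ is orthonormal because $\Theta\subset\Omega^-\perp U$. Lemma~\ref{lem:block-projection} then shows that $E_R(x)-E_C(E_R(x))$ is the projection of $x$ onto $\overline{\spanop_{\RR}}(\Theta\cup U)$. This closed span is the orthogonal sum of $\overline{\spanop_{\RR}}\Theta$ and $\overline{\spanop_{\RR}}U$, again because $\Theta\perp U$. Subtracting the projection onto $\overline{\spanop_{\RR}}\Theta$, which equals $P_{\mathfrak c_\Theta}(x)$ by the inductive hypothesis, leaves exactly the projection onto $\overline{\spanop_{\RR}}U$.

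One point needs care. The projections here are real-Hilbert-space projections inside $L^2(M)_{\sa}$, while $E_R$ and $E_C$ are complex projections. For self-adjoint $x$, however, $E_R(x)$ is self-adjoint, and the real projection onto a closed subspace of $L^2(M)_{\sa}$ agrees with the restriction of the complex one, as in Lemma~\ref{lem:trace-zero-reduction}. This compatibility, together with correctly invoking the inductive hypothesis on the auxiliary certificate $\mathfrak c_\Theta$ (which need not be a subcertificate of $\mathfrak c^-$), is the only delicate step. The induction on $\ell$, rather than on depth, is what handles the latter cleanly.
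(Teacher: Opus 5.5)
Your proposal is correct and follows essentially the same route as the paper. Both argue by induction on the certificate complexity, apply the completed-block projection formula to $(C,R,\Theta\cup U)$, and use the orthogonal splitting $\overline{\spanop_{\RR}}(\Theta\cup U)=\overline{\spanop_{\RR}}\Theta\oplus\overline{\spanop_{\RR}}U$ to identify $P_U^{\mathfrak c}(x)$ as the projection onto $\overline{\spanop_{\RR}}U$.
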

	
	\begin{proof}
		We prove the assertion by induction on the complexity of the finite certificate.
		The empty certificate is trivial.
		
		Let $
		\mathfrak c=
		\bigl(\mathfrak c^-,\mathfrak c_\Theta,C,R,U\bigr)
$
		be a certificate obtained by adding one layer.  Put
		\[
		\Omega^-=\Omega(\mathfrak c^-),
		\qquad
		\Theta=\Omega(\mathfrak c_\Theta),
		\qquad
		\Omega=\Omega^-\dot\cup U.
		\]
		By the induction hypothesis, $P_{\mathfrak c^-}(x)$ is the Hilbert-space
		projection of $x$ onto $\overline{\spanop_{\RR}}\Omega^-$, and
		$P_{\mathfrak c_\Theta}(x)$ is the Hilbert-space projection of $x$ onto
		$\overline{\spanop_{\RR}}\Theta$.  Both belong to $M_{\sa}$.
		
		The completed-block decomposition
		\[
		L^2(R)_{\sa}
		=
		L^2(C)_{\sa}\oplus
		\overline{\spanop_{\RR}}(\Theta\cup U)
		\]
		and the projection formula \eqref{eq:block-projection-formula} show that
		\[
		E_R(x)-E_C(E_R(x))
		\]
		is the Hilbert-space projection of $x$ onto
		$\overline{\spanop_{\RR}}(\Theta\cup U)$.  Since $U\perp\Omega^-$ and
		$\Theta\subset\Omega^-$, this projection splits orthogonally as the sum of
		the projection onto $\overline{\spanop_{\RR}}\Theta$ and the projection onto
		$\overline{\spanop_{\RR}}U$.  Hence
		\[
		P_U^{\mathfrak c}(x)
		=
		\bigl(E_R(x)-E_C(E_R(x))\bigr)-P_{\mathfrak c_\Theta}(x)
		\]
		is exactly the Hilbert-space projection of $x$ onto
		$\overline{\spanop_{\RR}}U$.  Therefore
		\[
		P_{\mathfrak c}(x)
		=
		P_{\mathfrak c^-}(x)+P_U^{\mathfrak c}(x)
		\]
		is the Hilbert-space projection of $x$ onto
		$\overline{\spanop_{\RR}}\Omega$.  The displayed formula is a finite
		combination of trace-preserving conditional expectations and previously
		constructed projection formulas, so it belongs to $M_{\sa}$.
	\end{proof}

	Consequently, if two finite-layer certificates have the same underlying
	orthonormal family \(\Theta\), then their formulas give the same element of
	\(M_{\sa}\) for every \(x=x^*\in M\), since both compute the Hilbert-space
	projection of \(x\) onto \(\overline{\spanop_{\RR}}\Theta\).

	\section{The strong closure lemma for certified backgrounds}\label{sec:strong-closure}

We illustrate the purpose of the closure construction.
	The closure operation below is used only to ensure the following point: whenever
	a small subfamily of a certified background is selected, it can be enlarged to
	a still small subfamily on which all relevant projection supports are closed.
	Consequently the Hilbert-space projections onto these enlarged subfamilies are
	represented by bounded elements of the ambient von Neumann algebra \(M\).  No
	additional model-theoretic property of \(M\) is being assumed.

We begin with a standard Hilbert-space observation.  Even when an
orthonormal family is uncountable, every individual vector has at most
countably many nonzero coordinates with respect to that family.  This
elementary fact is the reason that the support data used in the
Skolem-hull construction remain countable.
	
	\begin{lem}\label{lem:countable-supports}
		Let $T$ be an orthonormal family in a Hilbert space $H$.  For every
		$\xi\in H$,
		\[
		\supp_T(\xi)=\{t\in T:\langle \xi,t\rangle\neq0\}
		\]
		is countable.
	\end{lem}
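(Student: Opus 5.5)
The plan is to use Bessel's inequality. Fix $\xi\in H$ and, for each $n\in\NN$, consider the set
\[
S_n=\{t\in T:|\langle \xi,t\rangle|\ge 1/n\}.
\]
Every $t\in\supp_T(\xi)$ has $|\langle\xi,t\rangle|>0$, so it lies in some $S_n$. Hence
\[
\supp_T(\xi)=\bigcup_{n\ge1}S_n .
\]
It therefore suffices to show that each $S_n$ is finite, since a countable union of finite sets is countable.

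To bound $|S_n|$, take any finite subset $F\subset S_n$. The vectors in $F$ are orthonormal, so the orthogonal projection of $\xi$ onto their span has squared norm $\sum_{t\in F}|\langle\xi,t\rangle|^2$. This is the finite form of Bessel's inequality, and it gives
\[
\frac{|F|}{n^2}\le\sum_{t\in F}|\langle\xi,t\rangle|^2\le\|\xi\|^2 .
\]
Thus $|F|\le n^2\|\xi\|^2$ for every finite $F\subset S_n$. It follows that $S_n$ is finite, with $|S_n|\le n^2\|\xi\|^2$.

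The argument is the same in the real and complex cases, so it applies to the real Hilbert spaces $L^2(M)_{\sa}$ used in the paper. There is no real obstacle. The only point requiring care is that $T$ may be uncountable, so Bessel's inequality should be applied only to finite subfamilies, where it is elementary. One should not use an unordered sum over all of $T$ before countability of the support has been established.
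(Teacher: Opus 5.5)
Your proposal is correct and follows essentially the same route as the paper: decompose $\supp_T(\xi)$ as the countable union of the sets $\{t\in T:|\langle\xi,t\rangle|\ge 1/n\}$ and show each is finite by Bessel's inequality. You also make explicit the application to finite subfamilies and the bound $n^2\|\xi\|^2$; the paper leaves these implicit.
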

	
	\begin{proof}
		For $n\geq1$, the set
		\[
		F_n=\{t\in T:|\langle \xi,t\rangle|\geq 1/n\}
		\]
		is finite by Bessel's inequality.  Since
		\[
		\supp_T(\xi)=\bigcup_{n=1}^\infty F_n,
		\]
		the support is countable.
	\end{proof}

	Here and below, for a regular cardinal \(\chi\), \(H(\chi)\) denotes the set
	of all sets whose transitive closure has cardinality \(<\chi\).  We use only
	the standard elementary consequences of closing \(H(\chi)\) under countably
	many finitary Skolem functions.
	
	We shall use the classical elementary-submodel method for
	\(H(\chi)\).  The following facts are standard consequences of
	Skolemization in a countable first-order language; see
	\cite{CK90,Jech03}.  Closely related elementary-submodel techniques
	are widely used for separable reduction and for the construction of
	bounded projections in functional analysis; see, for example,
	\cite{Kub09,CK14}.  We record the precise form needed here in order to
	fix the cardinal and chain-continuity conventions.
	
	\begin{lem}[Skolem-hull closure fact]\label{lem:skolem-hull}
		Let \(\chi\) be regular and let \(\mathfrak A\) be a countable first-order
		expansion of \((H(\chi),\in)\) by finitary function symbols, including
		Skolem functions, constants for \(\omega\), and constants for all natural
		numbers.  For \(S\subset H(\chi)\), let \(\Hull_{\mathfrak A}(S)\) denote the
		Skolem hull generated by \(S\) together with the named constants.  Then:
		\begin{enumerate}[label=(\roman*)]
			
			\item \(|\Hull_{\mathfrak A}(S)|\le |S|+\aleph_0\);
			\item \(S_1\subset S_2\) implies
			\(\Hull_{\mathfrak A}(S_1)\subset\Hull_{\mathfrak A}(S_2)\);
			\item for every increasing chain \((S_i)_{i\in I}\),
			\[
			\Hull_{\mathfrak A}\Bigl(\bigcup_i S_i\Bigr)=
			\bigcup_i \Hull_{\mathfrak A}(S_i);
			\]
			\item if \(c\in\Hull_{\mathfrak A}(S)\) is countable in the universe, then
			\(c\subset\Hull_{\mathfrak A}(S)\).
		\end{enumerate}
	\end{lem}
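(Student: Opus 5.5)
The natural approach is to describe the Skolem hull explicitly as a countable increasing union of finitary closures, and to read off all four properties from that description. Write \(\mathcal F\) for the countable set of function symbols of \(\mathfrak A\), with constants regarded as \(0\)-ary functions. Put
\[
S_0=S\cup\{\text{named constants}\},
\qquad
S_{n+1}=S_n\cup\{f(a_1,\dots,a_k): f\in\mathcal F\ k\text{-ary},\ a_i\in S_n\}.
\]
Then \(\Hull_{\mathfrak A}(S)=\bigcup_n S_n\). This set is closed under every \(f\in\mathcal F\), since any finite tuple from the union already lies in some \(S_n\). It is also contained in every \(\mathcal F\)-closed superset of \(S\). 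Because \(\mathcal F\) contains Skolem functions, the Tarski--Vaught criterion shows that this hull is an elementary substructure of \(\mathfrak A\). That fact is needed only for (iv).

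For (i), induct on \(n\) to get \(|S_n|\le |S|+\aleph_0\). There are countably many symbols and at most \((|S|+\aleph_0)^{<\omega}=|S|+\aleph_0\) finite tuples, so each step preserves the bound, and a countable union of such sets satisfies it as well. Part (ii) is immediate from the stagewise description, since \(S_n\) depends monotonically on \(S\). For (iii), the inclusion \(\supseteq\) follows from (ii). For \(\subseteq\), note that each element of \(\Hull(\bigcup_i S_i)\) is a finite term built from finitely many elements of \(\bigcup_i S_i\). Since the chain is increasing, those finitely many elements lie in a single \(S_i\), so the element belongs to \(\Hull(S_i)\). Equivalently, \(\bigcup_i\Hull(S_i)\) is an increasing union of \(\mathcal F\)-closed sets, is therefore \(\mathcal F\)-closed, and contains \(\bigcup_i S_i\).

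The only genuinely set-theoretic point is (iv), and I expect it to be the main obstacle. Let \(c\in\Hull(S)\) be countable. If \(c=\varnothing\) there is nothing to prove. Otherwise \(H(\chi)\) contains a surjection \(g:\omega\to c\), because \(\chi\) is regular and uncountable, so \(H(\chi)\) contains all countable functions between its elements. Hence \(H(\chi)\models\exists g\,(g:\omega\to c\text{ onto})\). By elementarity, together with the constant for \(\omega\) in the hull, there is such a \(g\) in \(\Hull(S)\). For each \(n\in\omega\), the constant for \(n\) lies in the hull, and the value \(g(n)\) is definable from \(g\) and \(n\) by a formula with a unique witness. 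Applying the Skolem function for that formula, elementarity puts \(g(n)\in\Hull(S)\). Therefore \(c=\{g(n):n\in\omega\}\subset\Hull(S)\).

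The care in (iv) lies in confirming three things: that \(\chi>\aleph_0\), which is implicit in the intended use, where \(\chi\) is taken large and regular; that the surjection really lies in \(H(\chi)\); and that the unique witness is recovered by the Skolem function rather than merely some witness. Uniqueness of the witness handles the last point.
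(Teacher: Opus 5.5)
Your proposal is correct and follows the same route as the paper. Parts (i)--(iii) come from the stagewise closure under countably many finitary functions, using that every finite tuple from an increasing union lies in a single member of the chain; part (iv) comes from elementarity, which gives a surjection \(g:\omega\to c\) in the hull, and each \(g(n)\) is definable from \(g\) and \(n\). You spell out details the paper leaves implicit, such as \(\chi>\aleph_0\) (which is actually forced by \(\omega\in H(\chi)\)) and the unique-witness point.
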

	
	\begin{proof}
		The first three assertions follow from the usual construction of the closure
		under countably many finitary Skolem functions; in the chain-continuity
		statement, every finite tuple from an increasing union is already contained in
		one member of the chain.  For (iv), the hull is an elementary submodel of the
		Skolemized structure and contains every natural number.  Since \(c\) is
		countable, \(H(\chi)\) satisfies that there is a surjection
		\(f:\omega\to c\).  By elementarity and the Skolem functions, such an \(f\)
		belongs to the hull.  For every \(n\in\omega\), the value \(f(n)\) is definable
		from \(f\) and \(n\), and hence belongs to the hull.  Thus
		\(c\subset\Hull_{\mathfrak A}(S)\).  This is the standard Skolem-hull argument;
		see, for example, \cite{Jech03,CK90}.
	\end{proof}
	
	The elementary-submodel and projectional-skeleton methods in
	functional analysis provide general mechanisms for passing to small
	subspaces while preserving selected operations and projections; see
	\cite{Kub09,CK14}.  The next lemma is an operator-algebraic
	implementation of this principle for a single completed block.
	
	The standard Skolem-hull ingredients alone do not give the conclusion
	needed here.  The hull must also be closed under the relevant
	conditional expectations and under the support map of the
	completed-block projection.  Countability of Hilbert-space supports
	then allows one to reconstruct smaller von Neumann algebras
	\(A_S\subset A\) and \(Q_S\subset Q\) for which the selected family
	\(S\) remains a complete orthogonal complement.  Thus both the
	von Neumann algebraic structure and the bounded projection formula are
	preserved.  To the best of our knowledge, this precise closed-subblock
	statement is new.
	
	\begin{lem}[Closed subblocks of a completed block]\label{lem:closed-subblock}
		Let $\mathfrak B=(A,Q,T)$ be a completed block.  There is a closure operation $\cl_{\mathfrak B}$ on subsets of $T$ with the following properties:
		\begin{enumerate}[label=(\roman*)]
			\item $S\subset\cl_{\mathfrak B}(S)$;
			\item $\cl_{\mathfrak B}(\cl_{\mathfrak B}(S))=\cl_{\mathfrak B}(S)$;
			\item $|\cl_{\mathfrak B}(S)|\leq |S|+\aleph_0$;
			\item $S_1\subset S_2$ implies $\cl_{\mathfrak B}(S_1)\subset\cl_{\mathfrak B}(S_2)$;
			\item for every increasing chain $(S_i)_{i\in I}$,
			\[
			\cl_{\mathfrak B}\Bigl(\bigcup_iS_i\Bigr)=\bigcup_i\cl_{\mathfrak B}(S_i);
			\]
			\item if $S=\cl_{\mathfrak B}(S)$, then there are von Neumann algebras $A_S\subset A$ and $Q_S\subset Q$ such that
			\[
			(A_S,Q_S,S)
			\]
			is a completed block.
		\end{enumerate}
	\end{lem}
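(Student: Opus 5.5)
We plan to define \(\cl_{\mathfrak B}(S)=\Hull_{\mathfrak A}(S)\cap T\), where \(\mathfrak A\) is a countable Skolemized expansion of \((H(\chi),\in)\), with \(\chi\) a sufficiently large regular cardinal. We name as constants the data \(M,\tau,A,Q,T\), the maps \(E_A,E_Q\), the scalar fields, and \(\omega\) with all natural numbers. We also add the following function symbols: the \(*\)-algebra operations over \(\mathbb Q+i\mathbb Q\); the trace; \(E_A\) and \(E_Q\); and a support map \(\sigma(x)\), which sends a self-adjoint \(x\) to the countable set \(\supp_T(\widehat{x-E_A(E_Q x)})\), countable by Lemma~\ref{lem:countable-supports}. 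By elementarity, adding these definable functions costs nothing, but naming them makes the closure properties explicit.

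Properties (i), (iii), (iv) and (v) then follow directly from Lemma~\ref{lem:skolem-hull}(i)--(iii). For (ii), put \(H_S=\Hull_{\mathfrak A}(S)\). Since \(\cl(S)\subset H_S\), we get \(H_{\cl(S)}\subset H_S\), and hence \(\cl(\cl(S))\subset \cl(S)\). The reverse inclusion is (i).

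The real content is (vi). Assume \(S=\cl(S)\) and let \(H=H_S\). We set
\[
A_S=W^*(A\cap H),\qquad Q_S=W^*(Q\cap H).
\]
Then \(A\cap H\) and \(Q\cap H\) are unital \(*\)-algebras over \(\mathbb Q+i\mathbb Q\), closed under \(E_A\) and \(E_Q\), and \(T\cap H=S\). We will check the required properties as follows.

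(a) First, \(S\subset Q\cap H\), and \(S\perp L^2(A_S)\) because \(S\perp L^2(A)\supset L^2(A_S)\). So \(L^2(A_S)_{\sa}\oplus\overline{\spanop_\RR}S\subset L^2(Q_S)_{\sa}\).

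(b) For the reverse inclusion, it suffices, by density of \((Q\cap H)_{\sa}\) in \(L^2(Q_S)_{\sa}\) (Kaplansky, as in Lemma~\ref{lem:l2-cardinal-facts}), to decompose each \(x=x^*\in Q\cap H\). Write \(x=E_A(x)+(x-E_A(x))\). We have \(E_A(x)\in A\cap H\subset A_S\). By Lemma~\ref{lem:block-projection}, \(x-E_A(x)\) is the projection of \(x\) onto \(\overline{\spanop_\RR}T\), so it lies in the closed span of \(\{t:t\in\sigma(x)\}\). Since \(\sigma(x)\in H\) is countable, Lemma~\ref{lem:skolem-hull}(iv) gives \(\sigma(x)\subset H\cap T=S\). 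Hence \(x-E_A(x)\in\overline{\spanop_\RR}S\).

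(c) Finally, we must check that the expectation onto \(A_S\) does not interfere. Note that \(E_A(x)\in A_S\) already, so no expectation onto \(A_S\) itself is needed. The decomposition of \(L^2(Q_S)_{\sa}\) therefore holds on a dense set. Both summands are closed and mutually orthogonal, so their sum is closed, and the identity extends to all of \(L^2(Q_S)_{\sa}\).

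We expect (b) to be the main obstacle. The delicate point is that the hull must be closed under the support map and under \(E_A\), and that \(\sigma(x)\), being a countable element of \(H\), is absorbed into \(H\). We will also double-check two technical points: that \(H\cap Q\) is dense in \(L^2(Q_S)\), which holds since its generated von Neumann algebra is \(Q_S\); and that the Skolem constants make \(H\cap Q\) closed under the \(\mathbb Q+i\mathbb Q\)-algebra operations, so that Kaplansky's theorem applies to its norm closure.
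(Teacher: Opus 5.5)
Your proposal is correct and follows essentially the same route as the paper. In both, $\cl_{\mathfrak B}(S)=T\cap\Hull(S)$ is taken in a countable Skolemized expansion naming $E_A$, $E_Q$, the algebraic operations and the support map, and (vi) comes from the block projection formula, absorption of the countable support $\sigma(x)$ into the hull via Lemma~\ref{lem:skolem-hull}(iv), and a Kaplansky density argument. The only cosmetic difference is that you set $Q_S=W^*(Q\cap H)$ and decompose every self-adjoint element of $Q\cap H$, whereas the paper sets $Q_S=W^*(A_S,S)$ and decomposes rational $*$-words in $A\cap\mathcal E$ and $S$; the two algebras coincide a posteriori by Lemma~\ref{lem:l2-cardinal-facts}(iii).
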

	
	\begin{proof}
		We spell out the Skolem-hull construction. Choose a regular cardinal \(\chi\) such that
		\[
		M,A,Q,T,\tau,E_A,E_Q,\omega,\ [T]^{\leq\aleph_0}
		\]
		belong to \(H(\chi)\), and such that the graphs of the algebraic operations on
		\(M\) also belong to \(H(\chi)\).  We take \(\chi\) large enough for all these
		requirements throughout the argument below.
		
		Work in a countable first-order expansion \(\mathfrak A_{\mathfrak B}\) of
		\((H(\chi),\in)\).  The objects \(M,A,Q,T\), the algebraic operations on \(M\),
		and the conditional expectations \(E_A,E_Q\) are named as single predicates,
		function symbols, or set-valued parameters. The algebraic operations include the constants \(0_M\) and \(\one_M\),
		addition, multiplication, adjoint, and scalar multiplication by each
		\(\lambda\in\mathbb Q+i\mathbb Q\).  We do not add constants for all individual
		elements of \(M,A,Q\), or \(T\).
		
		We include a constant for \(\omega\) and constants for every natural number.
		Hence every Skolem hull \(\mathcal E\) considered below satisfies
		\[
		\omega\in\mathcal E
		\qquad\text{and}\qquad
		\omega\subset\mathcal E .
		\]
		Every partial map used below is made into a total function on \(H(\chi)\) by
		assigning an arbitrary fixed value outside its intended domain.
		
		As a function symbol in the expansion, include the total support map
		\[
		\sigma_{\mathfrak B}(x)=
		\supp_T\bigl(E_Q(x)-E_A(E_Q(x))\bigr),
		\qquad x=x^*\in M,
		\]
		again with an arbitrary fixed value outside \(M_{\sa}\).  By
		Lemma~\ref{lem:countable-supports}, for \(x=x^*\in M\) the value
		\(\sigma_{\mathfrak B}(x)\) is a countable subset of \(T\), and hence belongs
		to \(H(\chi)\).
		
		The language is countable, since only countably many algebraic operations,
		conditional expectations, support maps, and named structural parameters have
		been added.  Add countably many finitary Skolem functions for this countable
		structure.
		For \(S\subset T\), let \(\Hull_{\mathfrak B}(S)\) be the Skolem hull generated
		by \(S\), together with the fixed structural parameters just described, the
		constant \(\omega\), and the constants for all natural numbers.  Define
		\[
		\cl_{\mathfrak B}(S)=T\cap\Hull_{\mathfrak B}(S).
		\]
		Thus the hull is generated from \(S\), the natural numbers, and the fixed
		structural objects, but not from all individual elements of \(M,A,Q\), or \(T\).
		
		By Lemma~\ref{lem:skolem-hull}, the size, monotonicity, and chain-continuity
		assertions follow from the countability of the language and the finitarity of
		the Skolem functions.
		Idempotence follows because
		\[
		\Hull_{\mathfrak B}(S)
		\subset \Hull_{\mathfrak B}(\cl_{\mathfrak B}(S))
		\subset \Hull_{\mathfrak B}(\Hull_{\mathfrak B}(S))
		=\Hull_{\mathfrak B}(S).
		\]
		
		Assume now that $S=\cl_{\mathfrak B}(S)$, and set $\mathcal E=\Hull_{\mathfrak B}(S)$.  Then $S=T\cap\mathcal E$.  We shall use the standard elementary-hull fact: if $c\in\mathcal E$ is countable and $\omega\subset\mathcal E$, then $c\subset\mathcal E$.  Indeed, elementarity gives a surjection $f:\omega\to c$ with $f\in\mathcal E$, and then $f(n)\in\mathcal E$ for every $n\in\omega$.
		
		Define
		\[
		A_S=W^*(A\cap\mathcal E),
		\qquad
		Q_S=W^*(A_S,S).
		\]
		We show that $(A_S,Q_S,S)$ is a completed block. Let \(w=w(a_1,\ldots,a_m,s_1,\ldots,s_n)\) be a bounded self-adjoint
		\(*\)-word with coefficients in \(\mathbb Q+i\mathbb Q\), where
		\(a_i\in A\cap\mathcal E\) and \(s_j\in S\). Since
		\(S\subset T\subset Q\) and \(A\subset Q\), we have \(w\in Q\).  Moreover,
		because the algebraic operations are included in the Skolemized structure,
		\(w\in\mathcal E\).  In the original block,
		\[
		w=E_A(w)+P_T^{\mathfrak B}(w).
		\]
		Since $E_A$ is part of the structure, $E_A(w)\in A\cap\mathcal E$. Moreover
		\[
		\supp_T(P_T^{\mathfrak B}(w))=\sigma_{\mathfrak B}(w)\in\mathcal E .
		\]
		This set is countable.  Since \(\mathcal E\) contains \(\omega\) and all
		natural numbers, Lemma~\ref{lem:skolem-hull}\emph{(iv)} gives
		\[
		\sigma_{\mathfrak B}(w)\subset\mathcal E .
		\]
		Hence
		\[
		\sigma_{\mathfrak B}(w)\subset T\cap\mathcal E=S,
		\]
		and therefore
		\[
		P_T^{\mathfrak B}(w)\in\overline{\spanop_{\RR}}S.
		\]
		Let \(\mathcal A_S\) be the unital rational-coefficient algebraic
		\(*\)-algebra generated by \(A\cap\mathcal E\) and \(S\), and let
$
		B_S=\overline{\mathcal A_S}^{\|\cdot\|}
	$
		be its norm closure.  Then
		\[
		Q_S=W^*(A_S,S)=W^*(A\cap\mathcal E,S)=B_S'' .
		\]
		Kaplansky's density theorem \cite[p.~82, Theorem~II.4.8]{Tak02} applies to the $C^*$-algebra $B_S$: the
		self-adjoint part of the unit ball of $B_S$ is strong-$*$ dense in the
		self-adjoint part of the unit ball of $Q_S$.  Since $\mathcal A_S$ is norm
		dense in $B_S$, and since bounded strong convergence implies
		$L^2$-convergence in a finite von Neumann algebra, the bounded self-adjoint rational-coefficient \(*\)-words in
		\(A\cap\mathcal E\) and \(S\) are \(L^2\)-dense in
		\(L^2(Q_S)_{\sa}\). Hence
		\[
		L^2(Q_S)_{\sa}\subset L^2(A_S)_{\sa}+\overline{\spanop_{\RR}}S.
		\]
		The reverse inclusion is clear from the definition of $Q_S$, and the sum is orthogonal because $S\perp L^2(A)_{\sa}$ and $A_S\subset A$.  Thus
		\[
		L^2(Q_S)_{\sa}=L^2(A_S)_{\sa}\oplus\overline{\spanop_{\RR}}S.
		\]
	\end{proof}
	Lemma~\ref{lem:closed-subblock} treats a single completed block.  The
	next proposition propagates that closure construction through an
	arbitrary finite certificate tree.  This requires simultaneous control
	of the old certified background, the certified support over which a
	new layer is attached, and the completed block witnessing that layer.
	
	The construction is conceptually analogous to the coherence
	requirements in projectional-skeleton methods~\cite{Kub09}, but here
	the closed pieces must retain finite-layer certificates and their
	projections must remain expressible by conditional expectations in
	\(M\).  A further essential point is compatibility under adjoining a
	new layer: taking a closed subfamily of the enlarged background must
	recover a closed subfamily of the old background.  This compatibility
	will be used at every successor stage of the all-cardinal extension
	argument.  The proposition is one of the principal new technical
	ingredients of the paper.
	\begin{prop}[Strong closure for finite-layer backgrounds]\label{prop:strong-closure}
		Let $\Omega$ be a finite-layer certified background with a fixed finite certificate.  There is a closure operation $
		\cl_\Omega:\mathcal P(\Omega)\to\mathcal P(\Omega)$
		such that:
		\begin{enumerate}[label=(\roman*)]
			\item $S\subset\cl_\Omega(S)$;
			\item $\cl_\Omega(\cl_\Omega(S))=\cl_\Omega(S)$;
			\item $|\cl_\Omega(S)|\leq |S|+\aleph_0$;
			\item $S_1\subset S_2$ implies $\cl_\Omega(S_1)\subset\cl_\Omega(S_2)$;
			\item for every increasing chain $(S_i)_{i\in I}$,
		$
			\cl_\Omega\Bigl(\bigcup_iS_i\Bigr)=\bigcup_i\cl_\Omega(S_i);
		$
			\item if $S=\cl_\Omega(S)$, then $S$ is itself a finite-layer certified background.  Consequently $P_S(x)\in M_{\sa}$ for every $x=x^*\in M$.
		\end{enumerate}
		Moreover the closure operations may be chosen compatibly with adding one more layer: if $U$ is a new layer over a $\cl_\Omega$-closed certified subbackground $\Theta\subset\Omega$ and $\Omega^+=\Omega\dot\cup U$, then $\cl_{\Omega^+}$ can be chosen so that every $\cl_{\Omega^+}$-closed $\Xi\subset\Omega^+$ satisfies
		\[
		\Xi\cap\Omega=\cl_\Omega(\Xi\cap\Omega).
		\]
	\end{prop}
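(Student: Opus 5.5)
We would prove the proposition by induction on the complexity $\ell(\mathfrak c)$ of the fixed certificate (Remark~\ref{rem:certificate-complexity}), using a single Skolem hull rather than composing closure operators layer by layer. Concretely, fix a regular cardinal $\chi$ so large that $M$, $\tau$, all algebras $C,R$ occurring anywhere in the (finite) certificate tree of $\mathfrak c$, their conditional expectations, all layers $U$, the families $\Omega(\mathfrak c')$ for every subcertificate $\mathfrak c'$, and $[\Omega]^{\le\aleph_0}$ lie in $H(\chi)$. Work in a countable expansion $\mathfrak A_\Omega$ of $(H(\chi),\in)$ naming these finitely many structural objects, the algebraic operations of $M$ over $\mathbb Q+i\mathbb Q$, constants for $\omega$ and every natural number, and, for every layer $(\mathfrak c^-,\mathfrak c_\Theta,C,R,U)$ in the tree, the support map $x\mapsto \supp_{\Theta\cup U}\bigl(E_R(x)-E_C(E_R(x))\bigr)$. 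Define $\cl_\Omega(S)=\Omega\cap\Hull_{\mathfrak A_\Omega}(S)$. Properties (i)--(v) then follow exactly as in Lemma~\ref{lem:closed-subblock} from Lemma~\ref{lem:skolem-hull}.

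For (vi), let $S=\cl_\Omega(S)$ and $\mathcal E=\Hull(S)$. I would prove, by induction over the subcertificates $\mathfrak c'$ of the tree, that $S\cap\Omega(\mathfrak c')$ carries a finite-layer certificate. For a layer $(\mathfrak c^-,\mathfrak c_\Theta,C,R,U)$, put $C_S=W^*(C\cap\mathcal E)$ and $R_S=W^*\bigl(C_S,S\cap(\Theta\cup U)\bigr)$. The argument of Lemma~\ref{lem:closed-subblock}, applied to the completed block $(C,R,\Theta\cup U)$, shows $L^2(R_S)_{\sa}=L^2(C_S)_{\sa}\oplus\overline{\spanop_{\RR}}\bigl(S\cap(\Theta\cup U)\bigr)$: a rational $*$-word $w$ in $C\cap\mathcal E$ and $S$ lies in $\mathcal E$, $E_C(w)\in\mathcal E$, and its support is a countable element of $\mathcal E$, hence contained in $\Omega\cap\mathcal E=S$; Kaplansky density finishes the argument. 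By induction $S\cap\Omega^-$ and $S\cap\Theta$ are certified, so $(\mathfrak c^-_S,\mathfrak c_{\Theta,S},C_S,R_S,S\cap U)$ is a valid one-layer extension. Its underlying family is $(S\cap\Omega^-)\dot\cup(S\cap U)$, and $S\cap U\perp S\cap\Omega^-$ holds automatically. Lemma~\ref{lem:certified-projection} then gives $P_S(x)\in M_{\sa}$.

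For compatibility, given $U$ over a $\cl_\Omega$-closed $\Theta$ and $\Omega^+=\Omega\dot\cup U$, choose $\mathfrak A_{\Omega^+}$ as an expansion of $\mathfrak A_\Omega$ (same $\chi$, enlarged if necessary, plus names for the new block $C,R,U$ and its support map). If $\Xi=\Omega^+\cap\Hull_{\mathfrak A_{\Omega^+}}(\Xi)$, then the reduct of this hull is closed under the $\mathfrak A_\Omega$ Skolem functions. Hence $\Hull_{\mathfrak A_\Omega}(\Xi\cap\Omega)\subset\Hull_{\mathfrak A_{\Omega^+}}(\Xi)$, which gives $\cl_\Omega(\Xi\cap\Omega)\subset\Xi\cap\Omega$. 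The reverse inclusion is trivial.

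The main obstacle is this compatibility step together with the inductive certification. The subtlety is that $\Xi\cap\Omega$ need not equal $\Omega\cap\Hull_{\mathfrak A_\Omega}(\Xi\cap\Omega)$ merely by the inclusion of hulls, since the hull in the richer language can be larger. The inclusion above yields only $\cl_\Omega(\Xi\cap\Omega)\subseteq\Xi\cap\Omega$, which together with (i) gives equality. So the care lies in ensuring that the Skolem functions of $\mathfrak A_{\Omega^+}$ genuinely extend those of $\mathfrak A_\Omega$ (fix them once and for all and only add new ones). A second point needing care is that in the layer step the subcertificate $\mathfrak c_{\Theta,S}$ must certify exactly $S\cap\Theta$. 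This is where we need $\Theta$, $\Omega^-$ to be named parameters, so that intersections with them commute with taking the hull.
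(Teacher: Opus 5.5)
Your argument is correct and uses the same ingredients as the paper: a Skolem hull in a countable language naming the algebraic operations, the conditional expectations and the block support maps; absorption of countable supports into the hull; Kaplansky density; and compatibility by expanding the language. The difference is in how you organize (vi).

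\emph{The paper's route.} It builds $\cl_\Omega$ on top of previously constructed closures $\cl_{\Omega^-}$ and $\cl_\Theta$. It verifies that $S\cap\Omega^-$, $\Theta\cap\mathcal E$ and $(\Theta\cup U)\cap\mathcal E$ are closed for $\cl_{\Omega^-}$, $\cl_\Theta$ and the block closure respectively. It then applies the induction hypothesis and Lemma~\ref{lem:closed-subblock} as black boxes, each with its own smaller hull. This makes the inductive step essentially the same as the compatibility step.

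\emph{Your route.} You take one hull $\mathcal E$ for the whole tree and redo the sub-block computation at each node with $W^*(C'\cap\mathcal E)$. This needs only that each $E_{C'}$ and each support map is named, and that $\Theta'\cup U'\subset\Omega$, so that supports land in $\Omega\cap\mathcal E=S$. It is more self-contained, since no closure operators on sub-backgrounds are needed.

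\emph{Small repairs.}
In the block step, take words in $C\cap\mathcal E$ and $S\cap(\Theta\cup U)$, not in all of $S$. Otherwise $w$ need not lie in $R$, and $w-E_C(w)$ need not be the block projection.

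For (vi) of $\cl_{\Omega^+}$, your recursion also needs the blocks of the certificate attached to $\Theta$ to be named; your recipe for the whole tree of $\Omega^+$ would do this, but your parenthetical lists only the new block. Alternatively, use the paper's observation: $\Theta\cap\mathcal E$ is $\cl_\Omega$-closed, because $\Theta=\cl_\Omega(\Theta)$ and $\mathcal E$ is closed under the $\mathfrak A_\Omega$-Skolem functions. It is therefore certified by (vi) for $\Omega$.

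The first sentence of your ``main obstacle'' paragraph is misstated. What can differ are the hulls, with $\Hull_{\mathfrak A_\Omega}(\Xi\cap\Omega)\subseteq\Hull_{\mathfrak A_{\Omega^+}}(\Xi)$. The identity $\Xi\cap\Omega=\cl_\Omega(\Xi\cap\Omega)$ does hold, as you then show.
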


\begin{proof}
	We prove the closure and compatibility assertions simultaneously by
	induction on the complexity of the fixed finite certificate.
	
	Fix once and for all a regular cardinal \(\chi\) large enough that the
	entire finite certificate tree for \(\Omega\), all completed-block data
	occurring in that tree, all relevant conditional expectations and support
	maps, and the graphs of the algebraic operations on \(M\) belong to
	\(H(\chi)\). All Skolem hulls used in this proof are taken inside this same
	structure \(H(\chi)\).
	
	At each node of the certificate tree, we use a countable expansion of
	\(H(\chi)\). When passing to a later node, we enlarge the language by the
	structural data and the Skolem functions used at the earlier nodes. Every
	partial operation is extended to a total function on \(H(\chi)\) by
	assigning a fixed value outside its natural domain. Since the certificate
	tree has finite complexity, only finitely many previously constructed
	languages occur, and their union is still countable. Thus every previously
	used Skolem function is a total function on the same underlying set
	\(H(\chi)\), and the closure operations used below are formally compatible.
	
	The empty certificate is trivial. Suppose that
	\[
	\Omega=\Omega^-\dot\cup U,
	\]
	where \(\Omega^-\) has already been equipped with a closure operation
	having the required properties, and where \(U\) is a layer over a certified
	subbackground \(\Theta\subset\Omega^-\). Thus there are von Neumann
	algebras \(C\subset R\subset M\) such that
	\[
	L^2(R)_{\mathrm{sa}}
	=
	L^2(C)_{\mathrm{sa}}
	\oplus
	\overline{\spanop_{\RR}}(\Theta\cup U).
	\]
	
	Use a countable Skolemized expansion of the fixed structure \(H(\chi)\)
	containing:
	\begin{itemize}[leftmargin=2em]
		\item all data and Skolem functions used for the closure operations already
		constructed on \(\Omega^-\) and on the certified subbackgrounds occurring
		in its finite certificate, in particular on \(\Theta\);
		\item the completed-block data \((C,R,\Theta\cup U)\) and the corresponding
		support map from Lemma~\ref{lem:closed-subblock};
		\item the algebraic operations on \(M\), the relevant conditional
		expectations, and constants for \(\omega\) and all natural numbers.
	\end{itemize}
		Since the fixed certificate has finite complexity, only finitely many
		previously constructed closure languages occur here; their union is still a
		countable language.
		
		For $S\subset\Omega$, let $\Hull_\Omega(S)$ be the hull of $S\cup\{\omega\}$ together with the structural parameters in this structure, and define
		\[
		\cl_\Omega(S)=\Omega\cap\Hull_\Omega(S).
		\]
		Lemma~\ref{lem:skolem-hull} gives the size, monotonicity, and chain-continuity assertions, and the same hull calculation as in Lemma~\ref{lem:closed-subblock} gives idempotence.
		
		Assume $S=\cl_\Omega(S)$ and put $\mathcal E=\Hull_\Omega(S)$.  Then $S=\Omega\cap\mathcal E$.  Let
		\[
		S^- = S\cap\Omega^-,
		\qquad
		U_S=S\cap U,
		\qquad
		\Theta_S=\Theta\cap\mathcal E.
		\]
		Because the Skolem functions defining \(\cl_{\Omega^-}\) are included in the
		present language, \(S^-\) is \(\cl_{\Omega^-}\)-closed.  Indeed,
		\[
		\cl_{\Omega^-}(S^-)
		\subset \Omega^-\cap\mathcal E
		=
		S^-,
		\]
		and the reverse inclusion follows from the definition of a closure operation.
		Hence \(S^-\) is certified by the induction hypothesis.  Let
		\(\cl_\Theta\) be a strong closure operation for the certified background
		\(\Theta\), supplied by the induction hypothesis for the finite certificate
		of \(\Theta\).  The Skolem functions defining \(\cl_\Theta\) have been
		included in the present language.  Therefore
		\[
		\cl_\Theta(\Theta_S)
		\subset \Theta\cap\mathcal E
		=
		\Theta_S,
		\]
		and the reverse inclusion follows from extensivity of the closure operation.
		Thus \(\Theta_S\) is \(\cl_\Theta\)-closed, and hence is a certified
		subbackground.  Since \(\Theta\subset\Omega^-\), we also have
		\(\Theta_S\subset S^-\).  Because the Skolem functions for the completed block
		\(\mathfrak B=(C,R,\Theta\cup U)\) are included in the present language,
		\[
		T_S=(\Theta\cup U)\cap\mathcal E=\Theta_S\cup U_S
		\]
		is closed for the block closure from Lemma~\ref{lem:closed-subblock}.  Indeed,
		\[
		\cl_{\mathfrak B}(T_S)
		\subset
		(\Theta\cup U)\cap\mathcal E
		=
		T_S,
		\]
		and the reverse inclusion is automatic. Applying that lemma to this closed set gives von Neumann algebras $C_S\subset C$ and $R_S\subset R$ such that
		\[
		L^2(R_S)_{\sa}=L^2(C_S)_{\sa}\oplus\overline{\spanop_{\RR}}(\Theta_S\cup U_S).
		\]
		Thus $U_S$ is a layer over the certified subbackground $\Theta_S\subset S^-$.  It follows that
		\[
		S=S^-\dot\cup U_S
		\]
		is finite-layer certified.
		
		For the final compatibility assertion, construct \(\cl_{\Omega^+}\) by using
		a Skolem language which contains all Skolem functions used for \(\cl_\Omega\)
		and the completed-block data defining the new layer \(U\) over \(\Theta\).
		Let \(\Xi=\cl_{\Omega^+}(\Xi)\) and
		\(\mathcal E=\Hull_{\Omega^+}(\Xi)\).  Then
		\[
		\Xi=\Omega^+\cap\mathcal E,
		\qquad
		\Xi\cap\Omega=\Omega\cap\mathcal E.
		\]
		Since \(\mathcal E\) is closed under the finitary Skolem functions defining
		\(\cl_\Omega\),
		\[
		\cl_\Omega(\Xi\cap\Omega)\subset \Omega\cap\mathcal E=\Xi\cap\Omega.
		\]
		The reverse inclusion follows from extensivity, and therefore
		\[
		\Xi\cap\Omega=\cl_\Omega(\Xi\cap\Omega).
		\]
		
		We also need the old support of the new layer to be certified.  Since
		\(\Theta=\cl_\Omega(\Theta)\) and \(\mathcal E\) is closed under the Skolem
		functions defining \(\cl_\Omega\), we have
		\[
		\cl_\Omega(\Theta\cap\mathcal E)
		\subset
		\cl_\Omega(\Theta)\cap\mathcal E
		=
		\Theta\cap\mathcal E.
		\]
		Again the reverse inclusion follows from extensivity, so
		\[
		\Theta\cap\mathcal E
		=
		\cl_\Omega(\Theta\cap\mathcal E).
		\]
		Thus \(\Theta\cap\mathcal E\) is a certified subbackground by the closure
		assertions already proved for \(\Omega\).
		
		Finally, apply Lemma~\ref{lem:closed-subblock} to the completed block
		\((C,R,\Theta\cup U)\).  Since the corresponding block-closure Skolem
		functions are included in the language,
		\[
		(\Theta\cup U)\cap\mathcal E
		=
		(\Theta\cap\mathcal E)\cup(\Xi\cap U)
		\]
		is closed for that block closure.  Hence there are von Neumann algebras
		\(C_\Xi\subset C\) and \(R_\Xi\subset R\) such that
		\[
		L^2(R_\Xi)_{\sa}
		=
		L^2(C_\Xi)_{\sa}
		\oplus
		\overline{\spanop_{\RR}}\bigl((\Theta\cap\mathcal E)\cup(\Xi\cap U)\bigr).
		\]
		Thus \(\Xi\cap U\) is a certified layer over the certified subbackground
		\(\Theta\cap\mathcal E\).  Since
		\[
		\Theta\cap\mathcal E\subset \Xi\cap\Omega
		\]
		and \(\Xi\cap\Omega\) is \(\cl_\Omega\)-closed, the closure assertions already
		proved for \(\Omega\) show that \(\Xi\cap\Omega\) is finite-layer certified.
		Therefore
		\[
		\Xi=(\Xi\cap\Omega)\dot\cup(\Xi\cap U)
		\]
		is finite-layer certified by adjoining the layer \(\Xi\cap U\) over the
		certified subbackground \(\Theta\cap\mathcal E\subset\Xi\cap\Omega\).
	\end{proof}
	
	For later applications, we isolate the following formal consequences
	of Proposition~\ref{prop:strong-closure} and the recursive definition
	of a certificate.  The corollary introduces no additional construction;
	its purpose is to provide the precise restriction and adjoining
	operations used in the transfinite induction.

	\begin{cor}\label{cor:cert-restrict-adjoin}
		Let $(\Omega,\mathfrak c_\Omega)$ be a finite-layer certified background, and
		let $\cl_\Omega$ be a closure operation supplied by
		Proposition~\ref{prop:strong-closure}.
		
		\begin{enumerate}[label=(\roman*)]
			\item If $S\subset\Omega$ satisfies $S=\cl_\Omega(S)$, then $S$ admits a
			finite-layer certificate $\mathfrak c_S$.  With this certificate,
			$P_{\mathfrak c_S}(x)$ is the Hilbert-space projection of $x$ onto
			$\overline{\spanop_{\RR}}S$ for every $x=x^*\in M$.
			
			\item Suppose $S=\cl_\Omega(S)$ and $U\subset\mathcal S_0(M)$ is an
			orthonormal family orthogonal to $\Omega$.  Suppose further that there
			are von Neumann algebras $C\subset R\subset M$ such that
			\[
			L^2(R)_{\sa}
			=
			L^2(C)_{\sa}\oplus
			\overline{\spanop_{\RR}}(S\cup U).
			\]
			Then $\Omega\dot\cup U$ is a finite-layer certified background.  A
			certificate is obtained by taking the old certificate
			$\mathfrak c_\Omega$, the certificate $\mathfrak c_S$ from part
			\emph{(i)}, and adjoining the single layer $U$ over $S$.
			
			\item The strong closure operation on $\Omega\dot\cup U$ may be chosen
			compatibly with $\cl_\Omega$ in the sense of
			Proposition~\ref{prop:strong-closure}.
		\end{enumerate}
	\end{cor}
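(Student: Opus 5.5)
The corollary only repackages Proposition~\ref{prop:strong-closure} and the recursive Definition~\ref{def:certified}, so the proof is a direct verification, taken part by part.

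For (i), suppose $S=\cl_\Omega(S)$. Proposition~\ref{prop:strong-closure}(vi) states that $S$ is a finite-layer certified background. By definition this means there is a finite-layer certificate $\mathfrak c_S$ with $\Omega(\mathfrak c_S)=S$; we fix one. Lemma~\ref{lem:certified-projection}, applied to $(S,\mathfrak c_S)$, then shows that $P_{\mathfrak c_S}(x)$ is the Hilbert-space projection of $x$ onto $\overline{\spanop_{\RR}}S$ and that it lies in $M_{\sa}$. By the remark following that lemma, this element does not depend on which certificate for $S$ was chosen.

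For (ii), we check the hypotheses of the ``adding one layer'' clause of Definition~\ref{def:certified}, using the following data:
\begin{itemize}
\item $\mathfrak c^-=\mathfrak c_\Omega$, so that $\Omega^-=\Omega$;
\item $\mathfrak c_\Theta=\mathfrak c_S$, with $\Theta=S\subset\Omega$;
\item the new layer $U$, which is an orthonormal family in $\mathcal S_0(M)$ orthogonal to $\Omega$ by hypothesis;
\item the von Neumann algebras $C\subset R$, which realize the required decomposition $L^2(R)_{\sa}=L^2(C)_{\sa}\oplus\overline{\spanop_{\RR}}(S\cup U)$ by hypothesis.
\end{itemize}
The tuple $(\mathfrak c_\Omega,\mathfrak c_S,C,R,U)$ is therefore a finite-layer certificate with underlying family $\Omega\dot\cup U$. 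This union is disjoint because $U\perp\Omega$ and every element of $U$ is a unit vector, hence nonzero. Its complexity is $1+\ell(\mathfrak c_\Omega)+\ell(\mathfrak c_S)$, which is finite.

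For (iii), we invoke the ``moreover'' clause of Proposition~\ref{prop:strong-closure} with $\Theta=S$. That clause requires $U$ to be a new layer over a $\cl_\Omega$-closed certified subbackground, and this is exactly the situation produced in (ii). The proposition then gives a closure operation $\cl_{\Omega\dot\cup U}$ with the properties (i)--(vi) of the proposition, such that every closed $\Xi$ satisfies $\Xi\cap\Omega=\cl_\Omega(\Xi\cap\Omega)$.

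There is no substantial obstacle here. The one point needing care is in (iii): the certificate for $\Omega\dot\cup U$ built in (ii) must be the fixed certificate with respect to which the proposition constructs $\cl_{\Omega\dot\cup U}$, and its last layer must be attached over a $\cl_\Omega$-closed set, as the compatibility clause requires. Both hold by construction, since the certificate is exactly the one-layer extension of $\mathfrak c_\Omega$ over $S$, and $S=\cl_\Omega(S)$.
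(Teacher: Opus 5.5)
Your proposal is correct and follows the paper's proof essentially verbatim: (i) from Proposition~\ref{prop:strong-closure}(vi) together with Lemma~\ref{lem:certified-projection}, (ii) by directly checking the one-layer clause of Definition~\ref{def:certified}, and (iii) from the compatibility clause of Proposition~\ref{prop:strong-closure}. Your extra remarks are accurate details that the paper leaves implicit: the disjointness of $\Omega\dot\cup U$, the independence of $P_{\mathfrak c_S}(x)$ from the chosen certificate, and the fact that the last layer is attached over the $\cl_\Omega$-closed set $S$.
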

	
	\begin{proof}
		Part (i) is exactly Proposition~\ref{prop:strong-closure}(vi), together with
		Lemma~\ref{lem:certified-projection}.  Part (ii) follows directly from the
		recursive definition of finite-layer certificates in
		Definition~\ref{def:certified}.  Part (iii) is the compatibility assertion in
		Proposition~\ref{prop:strong-closure}.
	\end{proof}

	\section{The countable certified extension}\label{sec:countable-extension}
	We first prove the extension theorem in the countable case.  Its
	analytic descent mechanism is modeled on the greedy norm-reduction
	argument and diagonalization used in
	\cite[Claim~3 and the proof of Theorem~1]{HTZ26}.  In the present
	construction, however, a task must not merely be revisited infinitely
	often: its visiting times \(n_k\) must satisfy
	\(\sum_k n_k^{-1}=\infty\), because the available distance decrease is
	of harmonic order.  The following elementary strengthening of the
	usual dovetailing argument supplies exactly this schedule.
	
	\begin{lem}\label{lem:bookkeeping}
		Suppose that a recursion is indexed by \(n\geq1\), and that at each stage
		at most countably many new tasks may appear.  Then the tasks can be labelled
		and scheduled so that every task which eventually appears is processed
		infinitely often at times \(n_k\) satisfying
		\[
		\sum_k\frac1{n_k}=\infty .
		\]
	\end{lem}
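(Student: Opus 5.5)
The plan is an explicit dovetailing schedule built from a fixed enumeration of pairs, with the harmonic-divergence requirement secured by visiting each task at a positive-density set of times.

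First, label the tasks. Fix a bijection \(\pi:\NN\to\NN\times\NN\), say \(\pi^{-1}(m,j)=2^{m-1}(2j-1)\). At each stage \(n\), the at most countably many tasks appearing at that stage are enumerated as \(t_{n,1},t_{n,2},\dots\). If there are only finitely many, or none, we pad the list with dummy tasks, which are processed vacuously. A task appearing at stage \(n\) with index \(j\) thus receives the label \((n,j)\), and we set \(m_0=\pi^{-1}(n,j)\).

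Second, define the schedule. Since a task is only known from its appearance stage onward, the schedule must be causal. Write each time \(N\ge1\) uniquely as \(N=2^{m-1}(2k-1)\). At time \(N\) we process the task whose label is \(\pi(m)\), provided that task has already appeared, i.e.\ its first coordinate is at most \(N\). Otherwise we do nothing at time \(N\).

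Third, estimate the visiting times. Fix a task with label \((n,j)=\pi(m_0)\). It is processed at every time \(N=2^{m_0-1}(2k-1)\) with \(N\ge n\). This is an arithmetic progression with common difference \(d=2^{m_0}\), minus finitely many initial terms. Hence its visiting times satisfy \(n_k\le C+dk\) for suitable constants \(C,d\) depending on the task, and therefore
\[
\sum_k\frac1{n_k}\ \ge\ \sum_{k\ge k_0}\frac1{C+dk}=\infty .
\]
In particular, every task is processed infinitely often.

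The only point needing care is causality, together with the fact that distinct tasks never compete for the same time. Both are handled by the uniqueness of the decomposition \(N=2^{m-1}(2k-1)\) and by discarding the finitely many times before the task appears.

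I expect no genuine obstacle. The key observation is that the standard ``revisit infinitely often'' dovetailing already yields visiting times of positive lower density for each task, and positive density forces harmonic divergence. If one wants the tasks labelled in order of appearance, the same argument works with any bijection \(\NN\to\NN\times\NN\) in place of \(\pi\).
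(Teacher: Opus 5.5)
Your proposal is correct and is essentially the paper's own argument. The paper also splits the times into the $2$-adic classes $A_j=\{n:\nu_2(n)=j\}$ (your progressions $2^{m-1}(2k-1)$), dedicates each class to a single label, and reserves disjoint infinite label sets $L_m$ for the tasks first appearing at stage $m$, which is what your bijection $\pi$ onto stage--index pairs does; it then does nothing at the finitely many times before a task has appeared and gets divergence from the arithmetic-progression structure of each class, exactly as you do.
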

	
	\begin{proof}
		For \(j\in\{0,1,2,\ldots\}\), put
		\[
		A_j=\{n\geq1:\nu_2(n)=j\},
		\]
		where \(\nu_2(n)\) denotes the largest \(\nu\) such that \(2^\nu \mid n\).
		Then the sets \(A_j\) partition the positive integers, and
		\[
		\sum_{n\in A_j}\frac1n=\infty
		\qquad(j=0,1,2,\ldots).
		\]
		
		Next partition the label set \(\{0,1,2,\ldots\}\) into pairwise disjoint
		infinite subsets
		\[
		\{0,1,2,\ldots\}=\bigsqcup_{m=0}^\infty L_m .
		\]
		The labels in \(L_m\) will be reserved for tasks which first appear at stage
		\(m\).
		
		At stage \(m\), the new batch of tasks is countable, so we assign those tasks
		injectively to unused labels in the infinite set \(L_m\).  At time \(n\geq1\),
		let \(j=\nu_2(n)\).  If a task has already been assigned label \(j\), we
		process that task; otherwise we do nothing at time \(n\).
		
		If a task first appears at stage \(m\) and receives label \(j\in L_m\), then
		it is processed at all sufficiently late times in \(A_j\).  Removing finitely
		many early terms from \(A_j\) does not change the divergence of the harmonic
		sum.  Hence the visiting times \(n_k\) of this task satisfy
		\[
		\sum_k\frac1{n_k}=\infty .
		\]
	\end{proof}
Inspired by the argument in \cite[Claim~3]{HTZ26}, we obtain the following lemma.
	\begin{lem}\label{lem:sparse-greedy-descent}
		Let \(b=b^*\in M\), and let
$
		V_1\subset V_2\subset\cdots\subset L^2(M,\tau)_{\mathrm{sa}}
$
		be finite-dimensional real subspaces. Assume that \(V_n\) is spanned by
		at most \(n\) orthonormal symmetries. Let
$
		\mathcal N=\{n_1<n_2<\cdots\}\subset\mathbb N
$
		satisfy
$
		\sum_{k=1}^{\infty}1/n_k=\infty.
$
		Suppose that, for every \(n\in\mathcal N\), whenever
$
		r_n=b-P_{V_n}b\ne0,$
		there is a symmetry \(u_n\in M\) such that
		\[
		u_n\perp V_n,
		\qquad
		V_{n+1}\supset V_n\oplus\mathbb Ru_n,
		\qquad
		\langle r_n,u_n\rangle_2
		=
		\frac{\|r_n\|_2^2}{\|r_n\|_\infty}.
		\]
		Then
		\[
		b\in
		\overline{\bigcup_{n\ge1}V_n}^{\,\|\cdot\|_2}.
		\]
	\end{lem}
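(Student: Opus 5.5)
The plan is to track the squared distance $d_n^2=\|r_n\|_2^2=\|b-P_{V_n}b\|_2^2$ and show that it tends to $0$; since $P_{V_n}b\in V_n$, this gives $b\in\overline{\bigcup_n V_n}^{\,\|\cdot\|_2}$ directly. First, because $V_n\subset V_{n+1}$, the sequence $d_n^2$ is nonincreasing in $n$, so it has a limit $\delta\ge0$. The goal is to rule out $\delta>0$ using only the times $n\in\mathcal N$.

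\emph{Step 1 (one-step decrease).} Fix $n\in\mathcal N$ with $r_n\neq0$. Since $u_n$ is a symmetry, $\|u_n\|_2=1$. Since $u_n\perp V_n$, the space $V_n\oplus\RR u_n$ is contained in $V_{n+1}$, and so
\[
d_{n+1}^2\le \bigl\|b-P_{V_n}b-\langle r_n,u_n\rangle_2 u_n\bigr\|_2^2
= d_n^2-\langle r_n,u_n\rangle_2^2
= d_n^2-\frac{d_n^4}{\|r_n\|_\infty^2}.
\]

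\emph{Step 2 (operator-norm bound on the residual).} This is the step that requires care, because the decrease is only useful if $\|r_n\|_\infty$ does not grow too fast. Choose orthonormal symmetries $v_1,\dots,v_m$ spanning $V_n$, with $m\le n$. Then $P_{V_n}b=\sum_i\langle b,v_i\rangle_2 v_i$. Each $v_i$ has $\|v_i\|_\infty=1$, and Cauchy--Schwarz together with Bessel's inequality gives $\sum_i|\langle b,v_i\rangle_2|\le\sqrt m\,\|b\|_2$. Hence
\[
\|r_n\|_\infty\le\|b\|_\infty+\sqrt n\,\|b\|_2\le C\sqrt n,
\qquad C=\|b\|_\infty+\|b\|_2.
\]
Combining with Step 1, for $n\in\mathcal N$ with $r_n\ne0$ we get
\[
d_{n+1}^2\le d_n^2-\frac{d_n^4}{C^2n}.
\]
This is exactly the harmonic-order decrease that the hypothesis $\sum_k 1/n_k=\infty$ is designed to exploit.

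\emph{Step 3 (conclusion).} Suppose $\delta>0$. Then $r_n\neq0$ for every $n$, and $d_n^2\ge\delta$ for all $n$. By monotonicity, $d_{n_{k+1}}^2\le d_{n_k+1}^2$. Telescoping over $k=1,\dots,K$ therefore gives
\[
d_{n_{K+1}}^2\le d_{n_1}^2-\frac{\delta^2}{C^2}\sum_{k=1}^K\frac1{n_k}.
\]
The right-hand side tends to $-\infty$ as $K\to\infty$, which is impossible. Hence $\delta=0$, and so $\|b-P_{V_n}b\|_2\to0$, which proves the lemma.

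The only genuinely delicate point is the $\sqrt n$ operator-norm estimate in Step 2. It relies on the spanning vectors being symmetries, so that each has operator norm one. The rest is the routine monotone-decrease argument.
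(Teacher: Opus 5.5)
Your proof is correct and follows the paper's argument essentially step by step. It uses monotonicity of $\dist(b,V_n)$, the bound $\|r_n\|_\infty\le(\|b\|_\infty+\|b\|_2)\sqrt n$ obtained from Cauchy--Schwarz and Bessel together with the unit operator norm of symmetries, the harmonic-order decrease $\delta^2/(C^2n_k)$ at the visiting times, and telescoping against $\sum_k 1/n_k=\infty$; your $\delta$ is just the square of the paper's limit $d$.
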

	
	\begin{proof}
		Set
		\[
		D_n=\dist(b,V_n),
		\qquad
		d=\lim_{n\to\infty}D_n.
		\]
		Suppose that \(d>0\). In particular, \(b\ne0\), so
		\[
		K_b=\|b\|_\infty+\|b\|_2>0.
		\]
		
		If \(V_n\) is spanned by orthonormal symmetries
		\(v_1,\ldots,v_{m_n}\), where \(m_n\le n\), then Bessel's inequality gives
		\[
		\begin{aligned}
			\|b-P_{V_n}b\|_\infty
			&\le
			\|b\|_\infty+
			\sum_{j=1}^{m_n}|\langle b,v_j\rangle_2|\\
			&\le
			\|b\|_\infty+\sqrt{m_n}\,\|b\|_2\\
			&\le
			K_b\sqrt n.
		\end{aligned}
		\]
		At a visiting time \(n_k\), the vector \(u_{n_k}\) is orthogonal to
		\(V_{n_k}\), and therefore
		\[
		\begin{aligned}
			D_{n_k+1}^2
			&\le
			\dist\bigl(b,V_{n_k}+\mathbb Ru_{n_k}\bigr)^2\\
			&=
			D_{n_k}^2-
			|\langle r_{n_k},u_{n_k}\rangle_2|^2\\
			&=
			D_{n_k}^2-
			\frac{\|r_{n_k}\|_2^4}{\|r_{n_k}\|_\infty^2}\\
			&\le
			D_{n_k}^2-\frac{d^4}{K_b^2n_k}.
		\end{aligned}
		\]
		Since \(D_n\) is decreasing,
		\[
		D_{n_{k+1}}^2\le D_{n_k+1}^2.
		\]
		Consequently, for every \(\ell\ge1\),
		\[
		\sum_{k=1}^{\ell}\frac{d^4}{K_b^2n_k}
		\le D_{n_1}^2.
		\]
		Letting \(\ell\to\infty\) contradicts
		\[
		\sum_{k=1}^{\infty}\frac1{n_k}=\infty.
		\]
		Thus \(d=0\).
	\end{proof}
	
We now combine the relative norming lemma, the sparse greedy descent
argument, and the certificate machinery to prove Lemma~\ref{lem:countable-extension}.  The analytic part of the
construction has been isolated in Lemma~\ref{lem:sparse-greedy-descent}.
At each prescribed visiting time, the relative norming lemma provides a
new symmetry whose correlation with the current residual gives the
required decrease of the squared distance.  Lemma~\ref{lem:bookkeeping}
ensures that every eventual task is revisited along a sequence of stages
\((n_k)\) satisfying
\[
\sum_k \frac{1}{n_k}=\infty,
\]
so that Lemma~\ref{lem:sparse-greedy-descent} forces the corresponding
residual distance to vanish.

The remaining issue is to make this descent compatible with the
operator-algebraic recursion.  During the construction the parameter
set grows, and one must treat every self-adjoint \( * \)-polynomial word
that eventually appears.  Moreover, the Hilbert-space projection onto
the prescribed background must be represented by a bounded element of
\(M\).  This is precisely where the certificate hypothesis is used.
For an eventual self-adjoint word \(y\), the certified projection
\(P_\Omega(y)\) belongs to \(M_{\sa}\), and hence
\[
b(y)=y-E_N(y)-P_\Omega(y)
\]
is a bounded self-adjoint element of \(M\).  After removing its component
in the span of the previously chosen symmetries, the resulting residual
satisfies the hypotheses of
Lemma~\ref{lem:large-relative-norming}.

For comparison, De and Mukherjee~\cite[Theorem~3.4]{DM23} proved a
basis-extension result for uniformly bounded self-adjoint operators in
separable GNS spaces.  The countable extension lemma below has a
different purpose: it preserves orthogonality simultaneously to a
prescribed von Neumann subalgebra and to a possibly uncountable
certified background, absorbs specified algebraic data, and produces a
completed block whose associated projections remain represented inside
\(M\).
	\begin{lem}[Countable certified extension]\label{lem:countable-extension}
		Let $\kappa=\dens L^2(M)>\aleph_0$.  Let $N\subset M$ be a von Neumann subalgebra with $\dens L^2(N)<\kappa$.  Let $\Omega\subset\mathcal S_0(M)$ be a finite-layer certified background with $|\Omega|<\kappa$, $E_N(\Omega)=0$, and a fixed strong closure operation $\cl_\Omega$ as in Proposition~\ref{prop:strong-closure}.  Let $S\subset\Omega$, $X\subset M_{\sa}$, and $Y\subset N_{\sa}$ be countable.  Then there are von Neumann algebras $C\subset N$ and $C\subset R\subset M$, a $\cl_\Omega$-closed set $\Omega_0\subset\Omega$, and an orthonormal family $U\subset\mathcal S_0(M)$ such that
		\[
		S\subset\Omega_0,
		\qquad X\subset R,
		\qquad Y\subset C,
		\]
		\[
		|\Omega_0|,\ |U|,\ \dens L^2(R)\leq\aleph_0,
		\]
		\[
		E_N(U)=0,
		\qquad U\perp\Omega,
		\]
		and
		\[
		L^2(R)_{\sa}=L^2(C)_{\sa}\oplus\overline{\spanop_{\RR}}(\Omega_0\cup U).
		\]
		Consequently $U$ is a layer over $\Omega_0$.
	\end{lem}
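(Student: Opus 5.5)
The plan is a single $\omega$-length recursion. It builds increasing countable sets of generators, so that every self-adjoint $*$-word in the final generators splits as
\[
y=E_N(y)+P_\Omega(y)+b(y),
\]
with $E_N(y)$ landing in $C$, $P_\Omega(y)$ landing in $\overline{\spanop_{\RR}}\Omega_0$, and the bounded residual $b(y)$ landing in $\overline{\spanop_{\RR}}U$.

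\textbf{Setting up the recursion.} We build countable sets
\[
Y_1\subset Y_2\subset\cdots\subset N_{\sa},\qquad
S_1\subset S_2\subset\cdots\subset\Omega,
\]
and a growing finite orthonormal family $\{u_1,\dots,u_{m_n}\}\subset\mathcal S_0(M)$ with $m_n\le n$. Start with $Y_1=Y$ and $S_1=\cl_\Omega(S)$; the set $X$ is kept throughout. At stage $n$, put $\Omega_n=\cl_\Omega(S_n)$, which is countable by Proposition~\ref{prop:strong-closure}(iii). The current generators are $Y_n\cup\Omega_n\cup X\cup\{u_1,\dots,u_{m_n}\}$. The tasks at stage $n$ are the countably many bounded self-adjoint $*$-words $y$ with coefficients in $\mathbb Q+i\mathbb Q$ in these generators.

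\textbf{Processing a task.} For each such word $y$:
\begin{itemize}[leftmargin=2em]
\item Add $E_N(y)$ to $Y_{n+1}$.
\item Add $\supp_\Omega(P_\Omega(y))$ to $S_{n+1}$. It is countable by Lemma~\ref{lem:countable-supports}, and $P_\Omega(y)\in M_{\sa}$ by Lemma~\ref{lem:certified-projection}.
\item Register the bounded residual $b(y)=y-E_N(y)-P_\Omega(y)\in M_{\sa}$ as a task. The words $y$ are scheduled by Lemma~\ref{lem:bookkeeping}.
\end{itemize}

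\textbf{The norming step.} At time $n$, if the scheduled task has residual $r_n=b-P_{V_n}b\neq0$, where $V_n=\spanop_{\RR}\{u_1,\dots,u_{m_n}\}$, we note three facts. First, $E_N(r_n)=0$, because $E_N(b)=0$ and $E_N(u_j)=0$. Second, $r_n\perp\Omega$, because $b\perp\Omega$ and $u_j\perp\Omega$. Third, $r_n\perp V_n$. We then apply Lemma~\ref{lem:large-relative-norming} with $\Gamma=\Omega\cup\{u_1,\dots,u_{m_n}\}$, which has $|\Gamma|<\kappa$. This yields $u\in\mathcal S_0(M)$ with
\[
E_N(u)=0,\qquad u\perp\Omega\cup V_n,\qquad \langle r_n,u\rangle_2=\|r_n\|_2^2/\|r_n\|_\infty .
\]
Adjoin it as $u_{m_n+1}$, so that at most one symmetry is added per time. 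Since $b$ does not depend on $n$ (the projection $P_\Omega$ is onto the whole fixed $\Omega$), Lemma~\ref{lem:sparse-greedy-descent} gives $b(y)\in\overline{\spanop_{\RR}}U$ for every eventual word $y$, where $U=\{u_j\}_j$.

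\textbf{Defining the output.} Put $\Omega_0=\bigcup_n\Omega_n$. It is $\cl_\Omega$-closed by chain continuity (Proposition~\ref{prop:strong-closure}(v)) and is countable. Set
\[
C=W^*\Bigl(\bigcup_n Y_n\Bigr)\subset N,\qquad R=W^*\bigl(C\cup\Omega_0\cup U\cup X\bigr).
\]
Then $R$ and $C$ have countable $L^2$-density by Lemma~\ref{lem:l2-cardinal-facts}(i).

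\textbf{Verifying the decomposition.} The three summands are mutually orthogonal: $U\perp N$ and $\Omega\perp N$, while $C\subset N$, and $U\perp\Omega$. Each summand lies in $L^2(R)_{\sa}$. Every self-adjoint word in the generators of $R$ is an eventual task word. Indeed, a word in generators of $C$ is, up to $L^2$-approximation, a word in $\bigcup_n Y_n$, and each of its letters already appears at some finite stage. Every word in the generators of $C$ lies in $N$, so there $E_N$ is the identity; in general $E_N(y)\in C$. Also $P_\Omega(y)\in\overline{\spanop_{\RR}}\Omega_0$ and $b(y)\in\overline{\spanop_{\RR}}U$. The Kaplansky density argument from Lemma~\ref{lem:closed-subblock} shows these words are $L^2$-dense in $L^2(R)_{\sa}$. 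Hence
\[
L^2(R)_{\sa}=L^2(C)_{\sa}\oplus\overline{\spanop_{\RR}}(\Omega_0\cup U).
\]

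\textbf{Main obstacle.} The step I expect to need the most care is the bookkeeping interplay between these pieces. Tasks are created at every stage, including words involving the newly added $u_j$ and the new $\Omega_n$-elements. Each task's residual $b(y)$ must be fixed once and for all, which is why we project onto all of $\Omega$ rather than onto $\Omega_n$. The schedule must also add at most one symmetry per time, so that $m_n\le n$ as Lemma~\ref{lem:sparse-greedy-descent} requires.
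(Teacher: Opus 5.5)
Your proposal is correct and follows essentially the same route as the paper. The ingredients match: a single $\omega$-length recursion; the residual $b(y)=y-E_N(y)-P_\Omega(y)$ taken against the full fixed background $\Omega$; norming symmetries from Lemma~\ref{lem:large-relative-norming} with $\Gamma=\Omega\cup\{u_1,\dots,u_m\}$; harmonic-divergent scheduling via Lemma~\ref{lem:bookkeeping} feeding the sparse greedy descent; chain continuity of $\cl_\Omega$ for $\Omega_0$; and Kaplansky density for the final decomposition.

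The only differences are cosmetic. You absorb $E_N(y)$ and $\supp_\Omega(P_\Omega(y))$ for every stage-$n$ word at once, whereas the paper does this when the word is processed. You also leave implicit the routine checks $E_N(b(y))=0$ and $b(y)\perp\Omega$, which follow from the $L^2$-continuity of $E_N$ together with $E_N(\Omega)=0$.
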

	
	\begin{proof}
	We run a countable recursive construction with fair bookkeeping.  Maintain countable sets of parameters $\mathcal C\subset N_{\sa}$, $\Omega_*\subset\Omega$, and a finite or countable orthonormal family of already chosen symmetries $U_*$.  Initially
		\[
		\mathcal C=Y,
		\qquad
		\Omega_* = \cl_\Omega(S),
		\qquad
		U_* =\varnothing.
		\]
		We implement the task list in countable batches, using
		Lemma~\ref{lem:bookkeeping}.

		By a formal \(*\)-polynomial word we mean a noncommutative \(*\)-polynomial
		with coefficients in \(\mathbb Q+i\mathbb Q\) in finitely many parameters
		which have appeared by the relevant stage.  For such a word \(w\), we use the
		self-adjoint parts
		\[
		\Re w=\frac{w+w^*}{2},
		\qquad
		\Im w=\frac{w-w^*}{2i}.
		\]
		An \emph{eventual self-adjoint word} is an evaluated element of \(M_{\sa}\)
		which is equal to \(\Re w\) or \(\Im w\) for some such formal word \(w\), after
		all parameters occurring in \(w\) have appeared at some finite stage of the
		construction.

		At stage \(0\), declare all elements of the
		initial parameter set
		\[
		\mathcal C\cup X\cup\Omega_*\cup U_*
		\]
		to have appeared.  Enumerate all formal \( * \)-polynomial words with
		coefficients in \(\mathbb Q+i\mathbb Q\) using these appeared parameters,
		take their real and imaginary self-adjoint parts, and regard the resulting
		self-adjoint words as the first countable batch of tasks.
		
		At the end of each later stage, after \(\mathcal C\), \(\Omega_*\), or
		\(U_*\) has possibly been enlarged, declare all newly added parameters to
		have appeared.  Then enumerate all formal \( * \)-polynomial words with
		coefficients in \(\mathbb Q+i\mathbb Q\) using the appeared parameters, take
		their real and imaginary self-adjoint parts, and add precisely those
		self-adjoint words which have not previously been labelled as the next
		countable batch of tasks.
		
		By Lemma~\ref{lem:bookkeeping}, the labels can be assigned so that every
		eventual self-adjoint word is processed infinitely many times at visiting
		times \(n_k\) satisfying
		\[
		\sum_k\frac1{n_k}=\infty .
		\]
		At a stage whose scheduled label has not yet been assigned, we perform no
		processing and continue to the next stage.  Otherwise the scheduled label
		corresponds to a bounded self-adjoint word over the appeared parameter set,
		and we process that word as follows.
		
Thus suppose that the current scheduled task is represented by a bounded
self-adjoint word \(y\in M_{\sa}\). Define
\begin{equation}\label{eq:countable-residual}
	b(y)=y-E_N(y)-P_\Omega(y).
\end{equation}
The residual \eqref{eq:countable-residual} is represented by a bounded
self-adjoint element of \(M\): indeed,
\(E_N(y)\in N_{\sa}\subset M_{\sa}\), and
\(P_\Omega(y)\in M_{\sa}\) by Lemma~\ref{lem:certified-projection}.
		
		Moreover,
		\[
		E_N(P_\Omega(y))=0,
		\]
		because \(P_\Omega(y)\) is an \(L^2\)-limit of finite real linear combinations
		of elements of \(\Omega\), while \(E_N\) is \(L^2\)-continuous and vanishes on
		\(\Omega\).  Hence
		\[
		E_N(b(y))=0.
		\]
		Also
		\[
		b(y)\perp\overline{\spanop_{\RR}}\Omega.
		\]
		Indeed, \(y-P_\Omega(y)\) is orthogonal to
		\(\overline{\spanop_{\RR}}\Omega\) by definition of \(P_\Omega\), and
		\(E_N(y)\perp\Omega\) because \(E_N(y)\in N_{\sa}\) and \(E_N(\Omega)=0\).
		
	For \(\xi\in\overline{\spanop_{\RR}}\Omega\), write
	\[
	\supp_\Omega(\xi)
	=
	\{\omega\in\Omega:\langle \xi,\omega\rangle_2\neq0\}.
	\]	Add $E_N(y)$ to $\mathcal C$, add the countable set $\supp_\Omega(P_\Omega(y))$ to $\Omega_*$, and then replace $\Omega_*$ by $\cl_\Omega(\Omega_*)$.  This keeps $\Omega_*$ countable by Proposition~\ref{prop:strong-closure}.
		
		At this finite stage, only finitely many symmetries have been chosen; write
		them as \(u_1,\ldots,u_m\).  Since \(b(y)\) and the \(u_i\)'s are
		self-adjoint, the real Hilbert-space projection is
		\[
		P_{\spanop_{\RR}\{u_1,\ldots,u_m\}}b(y)
		=
		\sum_{i=1}^m \langle b(y),u_i\rangle_2 u_i,
		\]
		with real coefficients, and hence belongs to \(M_{\sa}\).  Put
		\[
		r=b(y)-P_{\spanop_{\RR}\{u_1,\ldots,u_m\}}b(y).
		\]
		Thus \(r\in M_{\sa}\).  We record explicitly that \(r\) satisfies the
		hypotheses needed for Lemma~\ref{lem:large-relative-norming}.  Since
		\(E_N(b(y))=0\) and \(E_N(u_i)=0\) for \(1\leq i\leq m\), we have
		\[
		E_N(r)=0.
		\]
		Moreover \(b(y)\perp\Omega\) and \(u_i\perp\Omega\) for all \(i\), hence
		\[
		\tau(\omega r)=0\qquad(\omega\in\Omega).
		\]
		By construction \(r\) is the residual after orthogonal projection onto
		\(\spanop_{\RR}\{u_1,\ldots,u_m\}\), so
		\[
		\tau(u_i r)=0\qquad(1\leq i\leq m).
		\]
		Therefore \(r\) is orthogonal to
		\[
		\Gamma=\Omega\cup\{u_1,\ldots,u_m\}.
		\]
		If $r=0$, choose no new symmetry at this stage.  If $r\neq0$, apply
		Lemma~\ref{lem:large-relative-norming} with this constraint set \(\Gamma\).
		This set has cardinal $<\kappa$.  We obtain $u_{m+1}\in\mathcal S_0(M)$ satisfying
		\[
		E_N(u_{m+1})=0,
		\qquad
		u_{m+1}\perp\Omega,
		\qquad
		u_{m+1}\perp u_i\quad(1\leq i\leq m),
		\]
		and
		\[
		\langle r,u_{m+1}\rangle_2
		=\frac{\|r\|_2^2}{\|r\|_\infty}.
		\]
		Add $u_{m+1}$ to $U_*$.
		
		Let $U$ be the family of all chosen symmetries, let $\Omega_0$ be the union of the increasing sequence of values of $\Omega_*$, let $C=W^*(\mathcal C)$, and put
		\[
		R=W^*(C,X,\Omega_0,U).
		\]
		The sets \(\Omega_0\) and \(U\) are countable.  Moreover, \(R\) is generated
		by countably many bounded elements.  Hence, by
		Lemma~\ref{lem:l2-cardinal-facts}\emph{(i)},
		\[
		\dens L^2(R)\leq\aleph_0 .
		\]
		The set \(\Omega_0\) is \(\cl_\Omega\)-closed by the continuity of the closure
		operation along the increasing sequence of values of \(\Omega_*\).
		
	It remains to prove the completed-block decomposition. Fix an eventual
	self-adjoint word \(y\). After all parameters occurring in \(y\) have
	appeared, the word receives a fixed label and is processed at visiting
	times
	\[
	n_1<n_2<\cdots,
	\qquad
	\sum_k\frac1{n_k}=\infty,
	\]
	by Lemma~\ref{lem:bookkeeping}.
	
	Let \(V_n\) be the real span of all symmetries chosen before stage \(n\).
	At most one symmetry is chosen at each stage, so \(V_n\) is spanned by at
	most \(n\) orthonormal symmetries. Whenever \(y\) is processed and
	\[
	r_n=b(y)-P_{V_n}b(y)\ne0,
	\]
	the construction adjoins a symmetry \(u_n\perp V_n\) satisfying
	\[
	\langle r_n,u_n\rangle_2
	=
	\frac{\|r_n\|_2^2}{\|r_n\|_\infty}.
	\]
	Lemma~\ref{lem:sparse-greedy-descent}, applied to \(b=b(y)\), therefore
	gives
	\[
	b(y)\in\overline{\spanop_{\RR}}U.
	\]

		For such $y$, rearranging \eqref{eq:countable-residual} gives
		\[
		y=E_N(y)+P_\Omega(y)+b(y),
		\]
		where $E_N(y)\in L^2(C)_{\sa}$, $P_\Omega(y)\in\overline{\spanop_{\RR}}\Omega_0$ by the support bookkeeping, and $b(y)\in\overline{\spanop_{\RR}}U$. The eventual words are \(L^2\)-dense in \(L^2(R)_{\sa}\).  Let
		\(\mathcal C_\infty\) be the final value of \(\mathcal C\), and put
		\[
		\mathcal P_\infty=\mathcal C_\infty\cup X\cup\Omega_0\cup U .
		\]
		Then
		\[
		C=W^*(\mathcal C_\infty),
		\qquad
		R=W^*(C,X,\Omega_0,U)=W^*(\mathcal P_\infty).
		\]
		Every element of \(\mathcal P_\infty\) appears at some finite stage of the
		construction.  Hence every unital rational-coefficient \(*\)-word in
		\(\mathcal P_\infty\), and its real and imaginary self-adjoint parts, is an
		eventual word. 
	Let
	\[
	\mathcal A_{\mathbb Q}
	=
	\operatorname{*-alg}_{\mathbb Q+i\mathbb Q}(\mathcal P_\infty)
	\]
	denote the unital \(*\)-algebra over \(\mathbb Q+i\mathbb Q\)
	generated by \(\mathcal P_\infty\).
		The self-adjoint part of \(\mathcal A_{\mathbb Q}\) is contained in the set
		of eventual self-adjoint words.  Its norm closure is the \(C^*\)-algebra
		generated by \(\mathcal P_\infty\), whose bicommutant is \(R\).  By
		Kaplansky density, \((\mathcal A_{\mathbb Q})_{\sa}\) is \(L^2\)-dense in
		\(L^2(R)_{\sa}\).  Therefore the eventual self-adjoint words are
		\(L^2\)-dense in \(L^2(R)_{\sa}\).
		
		The right-hand side below is closed, since
		\(L^2(C)_{\sa}\) is orthogonal to
		\(\overline{\spanop_{\RR}}(\Omega_0\cup U)\).  This orthogonality follows from
		\(C\subset N\), \(E_N(\Omega_0)=0\), \(E_N(U)=0\), and \(U\perp\Omega\). Hence
		\[
		L^2(R)_{\sa}\subset L^2(C)_{\sa}+\overline{\spanop_{\RR}}(\Omega_0\cup U).
		\]
		The reverse inclusion is clear from the definition of \(R\).  This proves the
		completed-block decomposition.
	\end{proof}
	
	\section{The all-cardinal certified extension theorem}\label{sec:all-cardinal-extension}
	
Before passing from the countable construction to arbitrary
cardinals, we record the standard continuity property of nested
completed blocks.  It is an immediate consequence of
Lemma~\ref{lem:l2-cardinal-facts}\emph{(ii)} and of the continuity of
orthogonal direct sums under increasing \(L^2\)-closures, but its
explicit formulation will simplify all limit-stage arguments below.
	
	\begin{lem}\label{lem:nested-limit}
		Let $(C_i,R_i,T_i)_{i\in I}$ be an increasing chain of completed blocks, in the sense that $C_i\subset C_j$, $R_i\subset R_j$, and $T_i\subset T_j$ whenever $i\leq j$.  Put
		\[
		C=W^*\Bigl(\bigcup_i C_i\Bigr),
		\qquad
		R=W^*\Bigl(\bigcup_i R_i\Bigr),
		\qquad
		T=\bigcup_iT_i.
		\]
		Then
		\[
		L^2(R)_{\sa}=L^2(C)_{\sa}\oplus\overline{\spanop_{\RR}}T.
		\]
	\end{lem}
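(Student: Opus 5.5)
The plan is to prove the two inclusions separately, after first checking that the right-hand side is an orthogonal sum of closed subspaces. For orthogonality: fix $c\in C_i$ self-adjoint and $t\in T_j$, and pick $k\ge i,j$. Then $c\in C_k$ and $t\in T_k$, and the block decomposition for $(C_k,R_k,T_k)$ gives $t\perp L^2(C_k)_{\sa}$. So $T\perp L^2(C_i)_{\sa}$ for every $i$.

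To pass from $\bigcup_i C_i$ to $C$, I would use Lemma~\ref{lem:l2-cardinal-facts}(ii). The family $(C_i)$ is directed, because a chain is directed, so $L^2(C)=\overline{\bigcup_i L^2(C_i)}$. Taking self-adjoint parts, which is harmless since the adjoint is an $L^2$-isometry commuting with the closure, gives $L^2(C)_{\sa}=\overline{\bigcup_i L^2(C_i)_{\sa}}$. Hence $T\perp L^2(C)_{\sa}$. The same argument gives $L^2(R)_{\sa}=\overline{\bigcup_i L^2(R_i)_{\sa}}$. The family $T$ is orthonormal, because any two of its elements lie in a common $T_k$. Therefore $\mathcal K:=L^2(C)_{\sa}\oplus\overline{\spanop_{\RR}}T$ is a closed subspace, being an orthogonal sum of two closed subspaces.

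\emph{Inclusion $\mathcal K\subset L^2(R)_{\sa}$.} We have $C_i\subset R_i\subset R$, so $C\subset R$ and $L^2(C)_{\sa}\subset L^2(R)_{\sa}$. Each $t\in T_i$ lies in $R_i\subset R$, as noted in Definition~\ref{def:block}. Since $L^2(R)_{\sa}$ is closed, it contains $\overline{\spanop_{\RR}}T$.

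\emph{Inclusion $L^2(R)_{\sa}\subset\mathcal K$.} For each $i$,
\[
L^2(R_i)_{\sa}=L^2(C_i)_{\sa}\oplus\overline{\spanop_{\RR}}T_i\subset\mathcal K .
\]
Taking the union over $i$ and then the closure, and using that $\mathcal K$ is closed, yields $L^2(R)_{\sa}\subset\mathcal K$.

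The only point needing care is the self-adjoint version of Lemma~\ref{lem:l2-cardinal-facts}(ii). Let $\xi\in L^2(P_0)_{\sa}$ be approximated by $\xi_n\in\bigcup_i L^2(P_i)$. Then $(\xi_n+\xi_n^*)/2$ lies in the same union, now in the self-adjoint parts, and still converges to $\xi$, because the involution $J$ is an isometry. I expect no real obstacle beyond this bookkeeping.
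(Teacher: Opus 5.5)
Your proof is correct and follows essentially the same route as the paper. Both arguments pass the orthogonality of $L^2(C)_{\sa}$ and $T$ to the $L^2$-closure, obtain the easy inclusion from $C\subset R$ and $T\subset R$, and obtain the hard inclusion by showing that the closed sum contains every $L^2(R_i)_{\sa}$, hence the closure of their union. The differences are cosmetic: you cite Lemma~\ref{lem:l2-cardinal-facts}(ii) and spell out the passage to self-adjoint parts and the orthonormality of $T$, whereas the paper re-derives the $L^2$-density of the union directly from Kaplansky's density theorem.
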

	
	\begin{proof}
		In a finite von Neumann algebra, if $R=W^*(\bigcup_iR_i)$ for an increasing family, then
		\[
		L^2(R)=\overline{\bigcup_i L^2(R_i)}^{\|\cdot\|_2}.
		\]
		Indeed, the strong-$*$ closure of the algebraic union is $R$, and bounded strong-$*$ convergence implies $L^2$-convergence; Kaplansky density then gives the equality.  The same holds for $C$.
		
		The subspace $L^2(C)_{\sa}$ is orthogonal to $\overline{\spanop_{\RR}}T$, because this orthogonality holds at every stage and passes to $L^2$-closures.  The closed sum contains each $L^2(R_i)_{\sa}$, hence contains the $L^2$-closure of their union, which is $L^2(R)_{\sa}$.  The reverse inclusion is immediate from $C\subset R$ and $T\subset R$.
	\end{proof}
	The following remark isolates a standard cardinal-arithmetic point
	which is easy to obscure in the transfinite bookkeeping.  No
	regularity assumption is imposed on either the intermediate cardinal
	\(\lambda\) or the ambient density \(\kappa\); in particular, the
	argument applies unchanged when \(\kappa\) is singular.
	\begin{rem}\label{rem:no-regularity}
		No regularity assumption on $\kappa$ is used.  We identify cardinals with
		initial ordinals.  If $\lambda>\aleph_0$ is an initial cardinal and
		$\alpha<\lambda$, then $|\alpha|<\lambda$, and hence
		\[
		\max\{\aleph_0,|\alpha|\}<\lambda .
		\]
		The case $\lambda=\aleph_0$ is handled separately as the base case in
		Proposition~\ref{prop:LBE}; in that case the displayed strict inequality with
		$\lambda$ on the right is not used.
		
		In all applications below, whenever $\lambda<\kappa$ and $\alpha<\lambda$, one
		still has
		\[
		\max\{\aleph_0,|\alpha|\}<\kappa .
		\]
		Indeed, if $\lambda=\aleph_0$, then the left-hand side is $\aleph_0<\kappa$;
		if $\lambda>\aleph_0$, then the preceding paragraph gives
		\(\max\{\aleph_0,|\alpha|\}<\lambda<\kappa\).  Thus the argument does not use
		regularity of $\kappa$, and this remains true when $\kappa$ is singular.
	\end{rem}
	The next proposition upgrades the countable certified extension to
	every cardinal below the ambient \(L^2\)-density.  Its transfinite
	organization is reminiscent of projectional resolutions of the
	identity in nonseparable Banach space theory~\cite{Kub09}, but the
	successor step here has an additional operator-algebraic difficulty:
	the previously constructed symmetries must themselves become part of
	the certified background for the next application of the induction
	hypothesis.
	
	This is achieved by re-certifying the entire previously constructed
	family as a single layer over its old support.  Consequently the
	finite depth of the certificate does not accumulate along the
	transfinite recursion.  At limit stages the completed blocks are
	passed to their increasing union, while the chain-continuity of the
	strong closure operation preserves certification.  The cardinal
	estimates use only that the current ordinal has cardinality strictly
	below the induction cardinal, and hence require no regularity
	assumption.  This all-cardinal extension theorem is a principal new
	component of the proof.
	\begin{prop}[Certified extension theorem]\label{prop:LBE}
		Let $\kappa=\dens L^2(M)>\aleph_0$ and let $\aleph_0\leq\lambda<\kappa$.  Let $N\subset M$ satisfy $\dens L^2(N)<\kappa$.  Let $\Omega\subset\mathcal S_0(M)$ be a finite-layer certified background with $|\Omega|<\kappa$, $E_N(\Omega)=0$, and a fixed strong closure operation $\cl_\Omega$ as in Proposition~\ref{prop:strong-closure}.  Let
		\[
		S\subset\Omega,
		\qquad
		X\subset M_{\sa},
		\qquad
		Y\subset N_{\sa}
		\]
		with $|S|,|X|,|Y|\leq\lambda$.  Then there are von Neumann algebras $C\subset N$ and $C\subset R\subset M$, a $\cl_\Omega$-closed set $\Omega_0\subset\Omega$, and an orthonormal family $U\subset\mathcal S_0(M)$ such that
		\[
		S\subset\Omega_0,
		\qquad X\subset R,
		\qquad Y\subset C,
		\]
		\[
		|\Omega_0|,\ |U|,\ \dens L^2(R)\leq\lambda,
		\]
		\[
		E_N(U)=0,
		\qquad U\perp\Omega,
		\]
		and
		\[
		L^2(R)_{\sa}=L^2(C)_{\sa}\oplus\overline{\spanop_{\RR}}(\Omega_0\cup U).
		\]
		Thus $U$ is a layer over $\Omega_0$.
	\end{prop}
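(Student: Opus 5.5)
We argue by transfinite induction on the cardinal $\lambda$. For fixed $\lambda$ the claim is proved simultaneously for all data $(N,\Omega,\cl_\Omega,S,X,Y)$ satisfying the hypotheses. The base case $\lambda=\aleph_0$ is exactly Lemma~\ref{lem:countable-extension}. So assume $\aleph_0<\lambda<\kappa$ and that the proposition holds for every cardinal $\mu$ with $\aleph_0\le\mu<\lambda$. Enumerate
\[
S=\{s_\alpha\}_{\alpha<\lambda},\qquad X=\{x_\alpha\}_{\alpha<\lambda},\qquad Y=\{y_\alpha\}_{\alpha<\lambda},
\]
repeating entries if necessary. We then build, by recursion on $\alpha\le\lambda$, increasing data $C_\alpha\subset N$, $R_\alpha$, $\cl_\Omega$-closed sets $\Omega_\alpha\subset\Omega$, and orthonormal families $U_\alpha\subset\mathcal S_0(M)$ satisfying the following conditions:
\begin{enumerate}[label=(\alph*)]
\item $E_N(U_\alpha)=0$ and $U_\alpha\perp\Omega$;
\item $|\Omega_\alpha|,\,|U_\alpha|,\,\dens L^2(R_\alpha)\le\max\{\aleph_0,|\alpha|\}$;
\item $(C_\alpha,R_\alpha,\Omega_\alpha\cup U_\alpha)$ is a completed block;
\item $s_\beta\in\Omega_{\alpha}$, $x_\beta\in R_\alpha$ and $y_\beta\in C_\alpha$ for all $\beta<\alpha$.
\end{enumerate}
By Remark~\ref{rem:no-regularity}, every bound in (b) with $\alpha<\lambda$ is $<\lambda$, so the induction hypothesis is available at each stage. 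No regularity of $\lambda$ or $\kappa$ is needed.

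At a limit stage $\alpha$ we take increasing unions for $\Omega_\alpha$ and $U_\alpha$, and let $C_\alpha$, $R_\alpha$ be the von Neumann algebras generated by the earlier ones. Lemma~\ref{lem:nested-limit} then gives the completed-block decomposition. Chain continuity, Proposition~\ref{prop:strong-closure}(v), shows that $\Omega_\alpha$ is $\cl_\Omega$-closed. The cardinal bounds follow from Lemma~\ref{lem:l2-cardinal-facts}(ii). The final object is taken at $\alpha=\lambda$, and there all bounds are $\le\lambda$, as required.

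The successor step $\alpha\mapsto\alpha+1$ is the main obstacle. The new symmetries must be orthogonal not only to $\Omega$ but also to the already chosen $U_\alpha$. Moreover the block decomposition has to persist, so the new step must also absorb $R_\alpha$ and the previous blocks. The plan is to enlarge the background. Set $\Omega'=\Omega\dot\cup U_\alpha$, certified by adjoining the single layer $U_\alpha$ over the $\cl_\Omega$-closed set $\Omega_\alpha$, using the block $(C_\alpha,R_\alpha,\Omega_\alpha\cup U_\alpha)$. Corollary~\ref{cor:cert-restrict-adjoin} makes this legitimate, and it supplies a compatible closure $\cl_{\Omega'}$. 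Then $|\Omega'|<\kappa$ and $E_N(\Omega')=0$. Because the whole previous family forms one layer, certificate depth does not accumulate.

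We next apply the induction hypothesis with $\mu=\max\{\aleph_0,|\alpha|\}<\lambda$ to $N$ and $\Omega'$, with inputs
\[
S'=\Omega_\alpha\cup U_\alpha\cup\{s_\alpha\},\qquad X'=X_\alpha\cup\{x_\alpha\},\qquad Y'=Y_\alpha\cup\{y_\alpha\}.
\]
Here $X_\alpha$ is a generating set of $R_\alpha$ of size $\le\mu$, and $Y_\alpha$ is one for $C_\alpha$; both exist by the density bounds. This yields a block $(C',R',\Omega'_0\cup U')$, where $U'\perp\Omega'$ and $\Omega'_0$ is $\cl_{\Omega'}$-closed and contains $\Omega_\alpha\cup U_\alpha$. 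By compatibility, $\Omega'_0\cap\Omega$ is $\cl_\Omega$-closed, and the remaining part $\Omega'_0\cap U_\alpha$ is all of $U_\alpha$. We therefore set
\[
\Omega_{\alpha+1}=\Omega'_0\cap\Omega,\qquad U_{\alpha+1}=U_\alpha\cup U',\qquad C_{\alpha+1}=C',\qquad R_{\alpha+1}=R'.
\]
Then $\Omega_{\alpha+1}\cup U_{\alpha+1}=\Omega'_0\cup U'$, so the decomposition holds. Monotonicity holds because $R_\alpha\subset R'$ and $C_\alpha\subset C'$. Orthogonality holds because $U'\perp\Omega\cup U_\alpha$ and $E_N(U')=0$.

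Two checks need care. The first is that the compatibility clause in Proposition~\ref{prop:strong-closure} genuinely makes $\Omega_{\alpha+1}$ a $\cl_\Omega$-closed set for the original fixed closure, and that the chain of certified backgrounds remains consistent across stages. The second is that the generating sets $X_\alpha$ and $Y_\alpha$ can always be chosen of size $\le\mu$, which follows from the density bounds.
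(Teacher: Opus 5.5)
Your proposal is correct and follows the paper's proof essentially step for step: induction on $\lambda$ with Lemma~\ref{lem:countable-extension} as base, an increasing recursion of completed states, re-certification of $\Omega\dot\cup U_\alpha$ as a single layer over the $\cl_\Omega$-closed $\Omega_\alpha$ via Corollary~\ref{cor:cert-restrict-adjoin}, the induction hypothesis at $\max\{\aleph_0,|\alpha|\}$ with $L^2$-dense self-adjoint (hence generating, by Lemma~\ref{lem:l2-cardinal-facts}(iv)) subsets of $R_\alpha$ and $C_\alpha$ among the inputs, the compatibility clause of Proposition~\ref{prop:strong-closure} to recover $\cl_\Omega$-closedness, and Lemma~\ref{lem:nested-limit} with chain continuity at limits. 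The only thing you leave unspecified is stage $0$, where you cannot treat $\Omega_0=\varnothing$ as an empty union because $\varnothing$ need not be $\cl_\Omega$-closed. Either start as the paper does, from one application of the countable case with support $\cl_\Omega(\varnothing)$ and $X=Y=\varnothing$, or start from the trivial block with $U_0=\varnothing$ and note that the first successor step needs no re-certification.
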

	
	\begin{proof}
		We prove the proposition by induction on $\lambda$.  The case $\lambda=\aleph_0$ is Lemma~\ref{lem:countable-extension}.
		
		Assume $\lambda>\aleph_0$ and that the proposition has been proved for all cardinals $\mu$ with $\aleph_0\leq\mu<\lambda$.  Enumerate the disjoint union of the data $S$, $X$, and $Y$ as $(z_\alpha)_{\alpha<\lambda}$, with repetitions or dummy entries if necessary, remembering the type of each genuine datum.  We first apply the countable case to the original background $\Omega$, with support $\cl_\Omega(\varnothing)$ and with $X=Y=\varnothing$.  This gives an initial completed state
		\[
		(C_0,R_0,\Omega_0\cup U_0)
		\]
		with countable density, where $\Omega_0$ is $\cl_\Omega$-closed and $U_0$ is a layer over $\Omega_0$.  The recursion below starts from this state; at successor stages we enlarge the current state to absorb the next datum.
		
		We recursively construct, for $\alpha<\lambda$, completed states
		\[
		(C_\alpha,R_\alpha,\Omega_\alpha\cup U_\alpha)
		\]
		such that
		\[
		C_\alpha\subset N,
		\quad C_\alpha\subset R_\alpha\subset M,
		\quad \Omega_\alpha\subset\Omega,
		\quad \Omega_\alpha=\cl_\Omega(\Omega_\alpha),
		\]
		$U_\alpha$ is a layer over $\Omega_\alpha$,
		\[
		E_N(U_\alpha)=0,
		\quad U_\alpha\perp\Omega,
		\]
		and
		\[
		\dens L^2(R_\alpha),\ |\Omega_\alpha|,\ |U_\alpha|
		\leq\max\{\aleph_0,|\alpha|\}.
		\]
		The states are required to be increasing in all coordinates.
		
		Suppose the state at $\alpha$ is constructed.  Put
		\[
		\lambda_\alpha=\max\{\aleph_0,|\alpha|\}<\lambda.
		\]
		Here \(\lambda\) is identified with its initial ordinal.  No regularity of
		\(\lambda\), and hence no regularity of \(\kappa\), is used in this inequality;
		see Remark~\ref{rem:no-regularity}.
		By the induction invariant,
		\[
		\Omega_\alpha=\cl_\Omega(\Omega_\alpha)
		\]
		and
		\[
		L^2(R_\alpha)_{\sa}
		=
		L^2(C_\alpha)_{\sa}\oplus
		\overline{\spanop_{\RR}}(\Omega_\alpha\cup U_\alpha).
		\]
		Moreover $U_\alpha$ is orthonormal and orthogonal to $\Omega$. Therefore
		Corollary~\ref{cor:cert-restrict-adjoin}, applied with
		$S=\Omega_\alpha$ and $U=U_\alpha$, certifies the background
		\[
		\Omega\dot\cup U_\alpha .
		\]
		This is a re-certification of the whole currently constructed family
		\(U_\alpha\) as a single layer over \(\Omega_\alpha\); the finite certificate
		depth is not accumulated along the transfinite recursion.  The same corollary
		supplies a strong closure operation on
		$\Omega\dot\cup U_\alpha$ compatible with the original operation
		$\cl_\Omega$. Also
		\[
		E_N(\Omega\dot\cup U_\alpha)=0
		\]
		and
		\[
		|\Omega\dot\cup U_\alpha|<\kappa,
		\]
		because $|\Omega|<\kappa$ and
		$|U_\alpha|\leq\lambda_\alpha<\lambda<\kappa$.
		
		Choose self-adjoint \(L^2\)-dense sets
		\[
		Y_\alpha\subset (C_\alpha)_{\sa},
		\qquad
		X_\alpha\subset (R_\alpha)_{\sa},
		\]
		of cardinality at most \(\lambda_\alpha\).  This is possible because
		\[
		\dens L^2(C_\alpha)\leq \dens L^2(R_\alpha)\leq\lambda_\alpha
		\]
		and the bounded self-adjoint parts are \(L^2\)-dense in the corresponding
		self-adjoint \(L^2\)-spaces.  By Lemma~\ref{lem:l2-cardinal-facts}(iv),
		\[
		W^*(Y_\alpha)=C_\alpha,
		\qquad
		W^*(X_\alpha)=R_\alpha.
		\]
		Set
		\[
		S'_\alpha=\Omega_\alpha\cup U_\alpha,
		\qquad X'_\alpha=X_\alpha,
		\qquad Y'_\alpha=Y_\alpha.
		\]
		If the datum $z_\alpha$ comes from the original set $S$, add it to $S'_\alpha$; if it comes from $X$, add it to $X'_\alpha$; and if it comes from $Y$, add it to $Y'_\alpha$.  Apply the induction hypothesis at cardinal $\lambda_\alpha$ to the background $\Omega\dot\cup U_\alpha$ with input $S'_\alpha,X'_\alpha,Y'_\alpha$.  We obtain a completed extension
		\[
		(C_{\alpha+1},R_{\alpha+1},\Xi_{\alpha+1}\cup V_{\alpha+1}),
		\]
		where
$
		\Xi_{\alpha+1}\subset\Omega\dot\cup U_\alpha
$
		is closed for the chosen closure operation on $\Omega\dot\cup U_\alpha$ and contains $\Omega_\alpha\cup U_\alpha$.  Since all of $U_\alpha$ is required support,
		\[
		\Xi_{\alpha+1}=(\Xi_{\alpha+1}\cap\Omega)\dot\cup U_\alpha.
		\]
		Set
		\[
		\Omega_{\alpha+1}=\Xi_{\alpha+1}\cap\Omega,
		\qquad
		U_{\alpha+1}=U_\alpha\cup V_{\alpha+1}.
		\]
		By the compatibility part of Proposition~\ref{prop:strong-closure},
		\(\Omega_{\alpha+1}\) is \(\cl_\Omega\)-closed.  Moreover the new state
		contains the old one.  Indeed, \(Y_\alpha\subset C_{\alpha+1}\) and
		\(X_\alpha\subset R_{\alpha+1}\) by the choice of the input data for the
		induction hypothesis; since
		\[
		W^*(Y_\alpha)=C_\alpha,
		\qquad
		W^*(X_\alpha)=R_\alpha,
		\]
		we get
		\[
		C_\alpha\subset C_{\alpha+1},
		\qquad
		R_\alpha\subset R_{\alpha+1}.
		\]
		Also \(\Omega_\alpha\subset\Omega_{\alpha+1}\) and
		\(U_\alpha\subset U_{\alpha+1}\) by construction.  Thus the completed states
		are increasing in all coordinates.
		
		The completed-block decomposition for the output says exactly that
		\[
		L^2(R_{\alpha+1})_{\sa}=L^2(C_{\alpha+1})_{\sa}\oplus
		\overline{\spanop_{\RR}}(\Omega_{\alpha+1}\cup U_{\alpha+1}).
		\]
		The cardinal bounds are $\leq\lambda_\alpha\leq\max\{\aleph_0,|\alpha+1|\}$.  Thus the successor step is complete.
		
		At a limit ordinal $\delta<\lambda$, define
		\[
		C_\delta=W^*\Bigl(\bigcup_{\alpha<\delta}C_\alpha\Bigr),
		\qquad
		R_\delta=W^*\Bigl(\bigcup_{\alpha<\delta}R_\alpha\Bigr),
		\]
		\[
		\Omega_\delta=\bigcup_{\alpha<\delta}\Omega_\alpha,
		\qquad
		U_\delta=\bigcup_{\alpha<\delta}U_\alpha.
		\]
		Since \(\delta<\lambda\) and \(\lambda\) is an initial cardinal,
		\(|\delta|<\lambda\).  The cardinal estimates are as follows.  For each
		\(\alpha<\delta\), choose an \(L^2\)-dense subset
		\(D_\alpha\subset L^2(R_\alpha)\) of cardinal at most
		\(\max\{\aleph_0,|\alpha|\}\).  Then
		\[
		\left|\bigcup_{\alpha<\delta}D_\alpha\right|
		\leq
		|\delta|\cdot\max\{\aleph_0,|\delta|\}
		=
		\max\{\aleph_0,|\delta|\}.
		\]
		By Lemma~\ref{lem:l2-cardinal-facts}(ii), this union is \(L^2\)-dense in
		\(L^2(R_\delta)\).  The same cardinal calculation gives
		\[
		|\Omega_\delta|,\ |U_\delta|
		\leq\max\{\aleph_0,|\delta|\}.
		\]
		Hence
		\[
		\dens L^2(R_\delta),\ |\Omega_\delta|,\ |U_\delta|
		\leq\max\{\aleph_0,|\delta|\}<\lambda.
		\]
		The continuity of $\cl_\Omega$ from Proposition~\ref{prop:strong-closure} shows that $\Omega_\delta$ is $\cl_\Omega$-closed.  Lemma~\ref{lem:nested-limit} applied to the nested completed blocks gives
		\[
		L^2(R_\delta)_{\sa}=L^2(C_\delta)_{\sa}\oplus
		\overline{\spanop_{\RR}}(\Omega_\delta\cup U_\delta).
		\]
		Thus $U_\delta$ is a layer over $\Omega_\delta$.
		
		Finally, put
		\[
		C=W^*\Bigl(\bigcup_{\alpha<\lambda}C_\alpha\Bigr),
		\qquad
		R=W^*\Bigl(\bigcup_{\alpha<\lambda}R_\alpha\Bigr),
		\]
		\[
		\Omega_{\mathrm{out}}=\bigcup_{\alpha<\lambda}\Omega_\alpha,
		\qquad
		U=\bigcup_{\alpha<\lambda}U_\alpha.
		\]
		The continuity of $\cl_\Omega$ gives that $\Omega_{\mathrm{out}}$ is $\cl_\Omega$-closed, and Lemma~\ref{lem:nested-limit} gives
		\[
		L^2(R)_{\sa}=L^2(C)_{\sa}\oplus
		\overline{\spanop_{\RR}}(\Omega_{\mathrm{out}}\cup U).
		\]
		All data from $S$, $X$, and $Y$ were absorbed during the recursion.  The
		cardinal bounds for \(\Omega_{\mathrm{out}}\) and \(U\) follow from taking a
		union of \(\lambda\) many sets each of cardinal at most \(\lambda\):
		\[
		|\Omega_{\mathrm{out}}|,\ |U|\leq \lambda .
		\]
		
		It remains only to record the density bound for \(R\).  For each
		\(\alpha<\lambda\), choose an \(L^2\)-dense set
		\(D_\alpha\subset L^2(R_\alpha)\) with \(|D_\alpha|\leq\lambda\).  Then
		\[
		\left|\bigcup_{\alpha<\lambda}D_\alpha\right|
		\leq \lambda\cdot\lambda
		=\lambda,
		\] where the last equality uses the standard cardinal arithmetic identity
		\(\lambda\cdot\lambda=\lambda\) for every infinite cardinal \(\lambda\).
		By Lemma~\ref{lem:l2-cardinal-facts}\emph{(ii)}, the union
		\(\bigcup_{\alpha<\lambda}D_\alpha\) is \(L^2\)-dense in \(L^2(R)\).  Hence
		\[
		\dens L^2(R)\leq\lambda .
		\]
		Thus the conclusion holds with \(\Omega_0=\Omega_{\mathrm{out}}\).
	\end{proof}
	
	\section{Relative extension and proof of Theorem~\ref{thm:nonseparable-main}}\label{sec:nonseparable-main}
	We now extract from Proposition~\ref{prop:LBE} the relative basis
	extension property needed in the final recursion.  This formulation
	may be compared with the extension theorem of De and Mukherjee
	\cite[Theorem~3.4]{DM23}, which extends uniformly bounded
	self-adjoint bases in a separable GNS space.  Here the added vectors
	are required to be trace-zero symmetries, the original symmetry basis
	is preserved exactly, a prescribed set of new elements is absorbed,
	and the density of the enlarged algebra remains controlled.
	
	The proof is formally short, because the all-cardinal certified
	extension theorem has already incorporated the required norming,
	boundedness, and bookkeeping arguments.  The resulting relative
	symmetry-basis extension statement is the successor-step principle
	used in the proof of the main theorem.
	\begin{prop}[Relative extension property]\label{prop:RE}
		Let $\kappa=\dens L^2(M)>\aleph_0$ and let $\aleph_0\leq\lambda<\kappa$.  Let $N\subset M$ be a von Neumann subalgebra with $\dens L^2(N)\leq\lambda$.  Suppose $H_0(N)$ has an orthonormal basis $B_N\subset\mathcal S_0(N)$.  If $X\subset M_{\sa}$ and $|X|\leq\lambda$, then there are a von Neumann algebra $P$ with $N\cup X\subset P\subset M$ and an orthonormal family \(U\subset\mathcal S_0(P)\) such that
		\[
		\dens L^2(P)\leq\lambda,
	\qquad
		U\perp L^2(N)_{\sa},
		\]
		and
	$
		B_N\cup U
	$
		is an orthonormal basis of $H_0(P)$.
	\end{prop}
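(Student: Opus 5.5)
Apply Proposition~\ref{prop:LBE} with the empty background. Take $\Omega=\varnothing$, with the empty certificate and the trivial closure operation; it is certified, satisfies $|\Omega|<\kappa$, and $E_N(\Omega)=0$ holds vacuously. Put $S=\varnothing$ and take $X$ as given. Choose $Y\subset N_{\sa}$ to be an $L^2$-dense subset of $L^2(N)_{\sa}$ with $|Y|\le\lambda$; this exists since $\dens L^2(N)\le\lambda$.

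Proposition~\ref{prop:LBE} then yields $C\subset N$, $C\subset R\subset M$, $\Omega_0=\varnothing$, and an orthonormal $U\subset\mathcal S_0(M)$ with $X\subset R$, $Y\subset C$, $\dens L^2(R)\le\lambda$, $E_N(U)=0$, and
\[
L^2(R)_{\sa}=L^2(C)_{\sa}\oplus\overline{\spanop_{\RR}}U .
\]
Since $Y\subset C\subset N$, Lemma~\ref{lem:l2-cardinal-facts}(iv) gives $N=W^*(Y)\subset C$, so $C=N$. Set $P=R$. Then $N\cup X\subset P$ and $\dens L^2(P)\le\lambda$.

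By Definition~\ref{def:block}, $T\subset Q$ for a completed block, so $U\subset P$ and hence $U\subset\mathcal S_0(P)$. Also $E_N(U)=0$ gives $U\perp L^2(N)_{\sa}$ by \eqref{eq:conditional-expectation-orthogonality}.

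It remains to show $B_N\cup U$ is an orthonormal basis of $H_0(P)$. Remove the constants from both sides of the decomposition. Since $\one\in N$ and $U\perp\one$ (trace zero),
\[
H_0(P)=L^2(P)_{\sa}\ominus\RR\one=\bigl(L^2(N)_{\sa}\ominus\RR\one\bigr)\oplus\overline{\spanop_{\RR}}U=H_0(N)\oplus\overline{\spanop_{\RR}}U .
\]
The set $B_N$ is an orthonormal basis of $H_0(N)$, and $U$ is an orthonormal family orthogonal to $L^2(N)_{\sa}\supset B_N$. Hence $B_N\cup U$ is orthonormal with dense span in $H_0(P)$. Finally, $B_N\subset\mathcal S_0(N)\subset\mathcal S_0(P)$, because the trace on $P$ is restricted from $M$.

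The only point that needs care is whether Proposition~\ref{prop:LBE} legitimately allows the empty background and whether $C$ really equals $N$. The first holds because the empty certificate is explicitly allowed in Definition~\ref{def:certified}, and its closure operation trivially has the properties of Proposition~\ref{prop:strong-closure}. The second is forced by the choice of $Y$ together with Lemma~\ref{lem:l2-cardinal-facts}(iv). Everything else is bookkeeping.
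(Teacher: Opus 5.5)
Your proposal is correct and follows the paper's proof essentially step for step. You apply Proposition~\ref{prop:LBE} with $\Omega=S=\varnothing$ and an $L^2$-dense $Y\subset N_{\sa}$ of cardinality at most $\lambda$, force $C=N$ via Lemma~\ref{lem:l2-cardinal-facts}(iv), set $P=R$, and remove $\RR\one$ to get $H_0(P)=H_0(N)\oplus\overline{\spanop_{\RR}}U$. The only differences are cosmetic: you read $U\perp L^2(N)_{\sa}$ off $E_N(U)=0$ and $U\subset P$ off the remark in Definition~\ref{def:block}, whereas the paper reads the former from the decomposition and cites Lemma~\ref{lem:l2-cardinal-facts}(iii) directly for the latter.
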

	
	\begin{proof}
		Apply Proposition~\ref{prop:LBE} with the empty background
		$\Omega=\varnothing$, $S=\varnothing$, and with
		$Y\subset N_{\sa}$ a self-adjoint $L^2$-dense set of cardinal
		$\leq\lambda$.  We obtain $C\subset N$, $C\subset R\subset M$, and
		$U\subset\mathcal S_0(M)$ such that $Y\subset C$, $X\subset R$, and
		\[
		L^2(R)_{\sa}=L^2(C)_{\sa}\oplus\overline{\spanop_{\RR}}U.
		\]
		Since $Y$ is $L^2$-dense in $L^2(N)_{\sa}$,
		Lemma~\ref{lem:l2-cardinal-facts}\emph{(iv)} gives $W^*(Y)=N$.
		Hence $C=N$.  Put \(P=R\).  Then \(N=C\subset R=P\) and \(X\subset P\), and
		\[
		L^2(P)_{\sa}=L^2(N)_{\sa}\oplus\overline{\spanop_{\RR}}U.
		\]
		In particular,
		\[
		U\perp L^2(N)_{\sa}.
		\]
		Moreover \(U\subset P\).  Indeed, each \(u\in U\) belongs to \(M\), and its
		\(L^2\)-vector belongs to \(L^2(P)\) by the displayed decomposition.  Hence
		Lemma~\ref{lem:l2-cardinal-facts}\emph{(iii)} gives \(u\in P\).  Since each
		\(u\in U\) is a trace-zero symmetry in \(M\), it is also a trace-zero symmetry
		in \(P\).  Thus \(U\subset\mathcal S_0(P)\).
		
		Since \(U\subset\mathcal S_0(P)\), every element of \(U\) is orthogonal to
		\(\one\). Taking the orthogonal complement of \(\RR\one\) in the displayed
		decomposition gives
		\[
		H_0(P)=H_0(N)\oplus\overline{\spanop_{\RR}}U .
		\]
		Since \(B_N\) is an orthonormal basis of \(H_0(N)\), the family
		\(B_N\cup U\) is an orthonormal basis of \(H_0(P)\).
	\end{proof}
	The transfinite recursion requires an initial subalgebra whose
	trace-zero real \(L^2\)-space already has a symmetry basis.  We use the
	classical dyadic construction of a diffuse abelian subalgebra together
	with the Walsh orthonormal system~\cite{Wal23}.  We include the short
	construction in order to keep track of the fact that every
	nonconstant basis element is a trace-zero self-adjoint unitary; no
	novelty is claimed for this starting lemma.
	\begin{lem}[A separable abelian starting algebra]\label{lem:start}
		Every $\mathrm{II}_1$ factor contains a separable diffuse abelian von Neumann subalgebra $A_0$.  Moreover $H_0(A_0)$ has an orthonormal basis contained in $\mathcal S_0(A_0)$.
	\end{lem}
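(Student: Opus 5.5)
The plan is to build $A_0$ by dyadic halving and then use the Walsh functions on the resulting copy of $L^\infty[0,1]$.

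\textbf{Step 1: the dyadic projections.} Since $M$ is a $\mathrm{II}_1$ factor, every projection $e$ can be split as $e=e'+e''$ with orthogonal projections of trace $\tau(e)/2$. Applying this recursively, we obtain for each $n\ge 1$ a family of mutually orthogonal projections $(p_w)_{w\in\{0,1\}^n}$ such that $p_w=p_{w0}+p_{w1}$ and $\tau(p_w)=2^{-n}$.

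\textbf{Step 2: the algebra $A_0$.} Let $A_0=W^*(\{p_w\})$. The $p_w$ commute, because they are nested refinements of partitions of unity, so $A_0$ is abelian. It is generated by countably many elements, so Lemma~\ref{lem:l2-cardinal-facts}(i) shows that $L^2(A_0)$ is separable. For diffuseness, suppose $q$ is a minimal projection of $A_0$. For each $n$, $q$ is dominated by exactly one $p_w$ with $|w|=n$: indeed $q p_w\in\{0,q\}$, and $\sum_{|w|=n}p_w=\one$. Hence $\tau(q)\le 2^{-n}$ for all $n$, so $q=0$. Thus $A_0$ has no minimal projections and is diffuse.

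\textbf{Step 3: the Rademacher symmetries.} Set
\[
r_n=\sum_{|w|=n-1}(p_{w0}-p_{w1}).
\]
Each $r_n$ is a symmetry in $A_0$ with $\tau(r_n)=0$, and the $r_n$ commute. For a finite nonempty set $F\subset\NN$, define the Walsh element $w_F=\prod_{n\in F}r_n$. It is again a symmetry.

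We claim $\tau(w_F)=0$. Let $m=\max F$. Then $\prod_{n\in F\setminus\{m\}}r_n$ is constant on each $p_w$ with $|w|=m-1$. Since $\tau(p_{w0}-p_{w1})=0$ on each such block, the trace vanishes. Because the $r_n$ are commuting symmetries, $w_Fw_G=w_{F\triangle G}$. Therefore
\[
\tau(w_Fw_G)=\delta_{F,G},
\]
so $\{w_F:F\neq\varnothing\}$ is an orthonormal family in $H_0(A_0)$ contained in $\mathcal S_0(A_0)$.

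\textbf{Step 4: completeness.} This is the main point to check. For each $n$, the span of $\{w_F:F\subset\{1,\dots,n\}\}$ is contained in $\spanop\{p_w:|w|=n\}$. These $w_F$ are $2^n$ orthonormal vectors in a space of dimension $2^n$, so the two spans are equal. Consequently the complex span of all the $w_F$ is the $*$-algebra generated by the $p_w$. This $*$-algebra is norm dense in a $C^*$-algebra whose weak closure is $A_0$. By Kaplansky density, as in the proof of Lemma~\ref{lem:l2-cardinal-facts}(ii), it is $L^2$-dense in $L^2(A_0)$.

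Since the $w_F$ are self-adjoint and $L^2(A_0)_{\sa}$ is the real part, the real span of all $w_F$ is dense in $L^2(A_0)_{\sa}$. Finally, $w_\varnothing=\one$, so removing it shows that $\{w_F:F\neq\varnothing\}$ is an orthonormal basis of $H_0(A_0)=L^2(A_0)_{\sa}\ominus\RR\one$.
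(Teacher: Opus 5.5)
Your proposal is correct and follows essentially the same route as the paper. Both arguments build the dyadic tree of halving projections and take the abelian algebra it generates, prove diffuseness via the bound $\tau(q)\le 2^{-n}$, and show that the Walsh symmetries at level $n$ span $\spanop\{p_\sigma:|\sigma|=n\}$, then pass to $L^2(A_0)$ by Kaplansky density; the only differences are cosmetic (you define $w_F$ as products of Rademacher symmetries and check orthonormality and the dimension count explicitly, where the paper writes $w_F$ by a sign formula and cites the standard Walsh basis of each finite level).
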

	
	\begin{proof}
		We construct a dyadic system of projections.  Start with $p_\varnothing=1$.
		By the comparison and dimension theory of projections in a finite factor
		\cite[Ch.~6]{KR97II}, every nonzero projection in $M$ can be split into two
		orthogonal subprojections of equal trace.  Recursively, for every finite
		binary string $\sigma\in\{0,1\}^{<\mathbb N}$, choose orthogonal projections
		$p_{\sigma 0},p_{\sigma 1}\leq p_\sigma$ such that
		\[
		p_{\sigma 0}+p_{\sigma 1}=p_\sigma,
		\qquad
		\tau(p_{\sigma 0})=\tau(p_{\sigma 1})=\frac12\tau(p_\sigma).
		\]
		The projections in this dyadic tree commute pairwise, since any two of them
		are either orthogonal or one is dominated by the other. Let \(A_0\) be the abelian von Neumann algebra generated by all projections
		\(p_\sigma\). For \(n\geq0\), put
		\[
		F_n=\spanop\{p_\sigma:|\sigma|=n\}.
		\]
		Then \(F_n\subset F_{n+1}\), and
		\[
		A_0=W^*\Bigl(\bigcup_{n\geq0}F_n\Bigr).
		\]
		Hence, by Lemma~\ref{lem:l2-cardinal-facts}\emph{(ii)},
		\[
		L^2(A_0)=
		\overline{\bigcup_{n\geq0}L^2(F_n)}^{\|\cdot\|_2},
		\]
		so \(L^2(A_0)\) is separable.
		
		We claim that \(A_0\) is diffuse.  Let \(0\neq q\in A_0\) be a projection.
		Choose \(n\) with \(2^{-n}<\tau(q)\).  Since
		\(\sum_{|\sigma|=n}p_\sigma=\one\), there is some \(\sigma\in\{0,1\}^n\)
		such that \(qp_\sigma\neq0\).  If \(qp_\sigma=q\), then \(q\le p_\sigma\),
		which would imply
		\[
		\tau(q)\leq\tau(p_\sigma)=2^{-n},
		\]
		a contradiction.  Thus
		\[
		0<qp_\sigma<q.
		\]
		Since \(A_0\) is abelian, \(qp_\sigma\) is a projection in \(A_0\).  Hence
		\(q\) is not minimal, and \(A_0\) is diffuse.
		
		For every finite set \(F\subset\mathbb N\), choose \(n\) with
		\(F\subset\{1,\ldots,n\}\), and define
		\[
		w_F
		=
		\sum_{\sigma\in\{0,1\}^n}
		(-1)^{\sum_{j\in F}\sigma_j}p_\sigma .
		\]
		This definition is independent of the choice of \(n\).  Each \(w_F\) is a
		self-adjoint unitary in \(A_0\), \(w_\varnothing=\one\), and
		\(\tau(w_F)=0\) whenever \(F\neq\varnothing\).  For each \(n\), the finite
		family
		\[
		\{w_F:F\subset\{1,\ldots,n\}\}
		\]
		is the usual Walsh orthonormal basis of \(L^2(F_n)_{\sa}\). Therefore
		\[
		\{w_F:F\subset\mathbb N,\ |F|<\infty\}
		\]
		is an orthonormal basis of \(L^2(A_0)_{\sa}\), and the nonconstant Walsh
		symmetries
		\[
		\{w_F:0<|F|<\infty\}
		\]
		form an orthonormal basis of \(H_0(A_0)\) contained in
		\(\mathcal S_0(A_0)\).
	\end{proof}

Now, we prove our main result Theorem~\ref{thm:nonseparable-main}.  The new
	extension machinery makes it possible to run a recursion through the
	full density character of \(L^2(M,\tau)\), while every intermediate
	algebra retains strictly smaller density and an orthonormal basis of
	trace-zero symmetries.  At each successor stage one prescribed member
	of an \(L^2\)-dense family is absorbed, and at limit stages the nested
	bases and subalgebras are passed to their unions.
	
	\begin{proof}[Proof of Theorem~\ref{thm:nonseparable-main}]
		Let
		\[
		\kappa=\dens L^2(M,\tau)>\aleph_0.
		\]
		The decompositions
		\[
		L^2(M)_{\sa}=H_0(M)\oplus\RR\one,
		\qquad
		L^2(M)=L^2(M)_{\sa}+iL^2(M)_{\sa}
		\]
		show that $\dens H_0(M)=\kappa$. The bounded trace-zero self-adjoint elements $M_{\sa}\cap H_0(M)$ are
		$L^2$-dense in $H_0(M)$.  Indeed, if $\xi\in H_0(M)$, then by definition of
		$L^2(M)_{\sa}$ there are $x_i=x_i^*\in M$ with
		$\|x_i-\xi\|_2\to0$.  Since
		\[
		\tau(x_i)=\langle x_i,1\rangle_2\longrightarrow
		\langle \xi,1\rangle_2=0,
		\]
		we have
		\[
		x_i-\tau(x_i)1\in M_{\sa}\cap H_0(M)
		\]
		and
		\[
		\|x_i-\tau(x_i)1-\xi\|_2\to0.
		\] 
		Choose an $L^2$-dense family
		\[
		\{a_\alpha:\alpha<\kappa\}\subset M_{\sa}\cap H_0(M).
		\]
		
		By Lemma~\ref{lem:start}, choose a separable diffuse abelian von Neumann subalgebra $N_0\subset M$ and an orthonormal basis $B_0\subset\mathcal S_0(N_0)$ of $H_0(N_0)$.
		
		We recursively construct von Neumann algebras $N_\alpha\subset M$ and orthonormal families $B_\alpha\subset\mathcal S_0(M)$, for $\alpha<\kappa$, such that:
		\begin{enumerate}[label=(\roman*)]
			\item \(B_\alpha\subset\mathcal S_0(N_\alpha)\) is an orthonormal basis of \(H_0(N_\alpha)\);
			\item $a_\alpha\in N_{\alpha+1}$;
			\item if $\alpha<\beta$, then $N_\alpha\subset N_\beta$ and $B_\alpha\subset B_\beta$;
			\item $\dens L^2(N_\alpha)\leq\max\{\aleph_0,|\alpha|\}$.
		\end{enumerate}
		The initial pair is $(N_0,B_0)$.
		
		Suppose $N_\alpha,B_\alpha$ have been constructed.  Put
		\[
		\lambda_\alpha=\max\{\aleph_0,|\alpha|\}.
		\]
		Since $\alpha<\kappa$ and $\kappa$ is identified with its initial ordinal, $\lambda_\alpha<\kappa$.  Proposition~\ref{prop:RE}, applied to \(N=N_\alpha\) and
		\(X=\{a_\alpha\}\), gives a von Neumann algebra \(N_{\alpha+1}\) containing
		\(N_\alpha\cup\{a_\alpha\}\) and an orthonormal family
		\(U_\alpha\subset\mathcal S_0(N_{\alpha+1})\) such that
		\[
		B_\alpha\cup U_\alpha
		\]
		is an orthonormal basis of \(H_0(N_{\alpha+1})\).  Set
		\[
		B_{\alpha+1}=B_\alpha\cup U_\alpha .
		\]
		The density bound is also supplied by Proposition~\ref{prop:RE}.
		
		At a limit ordinal $\delta<\kappa$, set
		\[
		N_\delta=W^*\Bigl(\bigcup_{\alpha<\delta}N_\alpha\Bigr),
		\qquad
		B_\delta=\bigcup_{\alpha<\delta}B_\alpha.
		\]
		In a finite von Neumann algebra,
		\[
		L^2(N_\delta)=
		\overline{\bigcup_{\alpha<\delta}L^2(N_\alpha)}^{\|\cdot\|_2},
		\]
		by the same Kaplansky-density argument as in Lemma~\ref{lem:nested-limit}.
		We claim that
		\[
		H_0(N_\delta)
		=
		\overline{\bigcup_{\alpha<\delta}H_0(N_\alpha)}^{\|\cdot\|_2}.
		\]
		Let \(\xi\in H_0(N_\delta)\) and let \(\varepsilon>0\).  By the displayed
		\(L^2\)-continuity and by taking self-adjoint parts, there are
		\(\alpha<\delta\) and \(x=x^*\in N_\alpha\) such that
		\[
		\|x-\xi\|_2<\varepsilon.
		\]
		Since \(\xi\perp\one\) and \(\|\one\|_2=1\),
		\[
		|\tau(x)|
		=
		|\langle x-\xi,\one\rangle_2|
		\leq \|x-\xi\|_2
		<\varepsilon.
		\]
		Therefore \(x-\tau(x)\one\in H_0(N_\alpha)\), and
		\[
		\|x-\tau(x)\one-\xi\|_2
		\leq
		\|x-\xi\|_2+|\tau(x)|\,\|\one\|_2
		<2\varepsilon.
		\]
		This proves the claim.
		Since the bases are nested, $B_\delta$ is an orthonormal basis of $H_0(N_\delta)$.  Also
		\[
		\dens L^2(N_\delta)\leq\max\{\aleph_0,|\delta|\}<\kappa.
		\]
		
		Put
		\[
		B=\bigcup_{\alpha<\kappa}B_\alpha .
		\]
		Since the \(B_\alpha\)'s are nested orthonormal families, \(B\) is orthonormal, and \(B\subset\mathcal S_0(M)\).
		For every $\alpha<\kappa$, the element $a_\alpha$ belongs to $N_{\alpha+1}$ and has trace zero; hence
		\[
		a_\alpha\in H_0(N_{\alpha+1})\subset\overline{\spanop_{\RR}}B.
		\]
		Since the family $(a_\alpha)_{\alpha<\kappa}$ is $L^2$-dense in $H_0(M)$, the orthonormal family $B$ is a Hilbert orthonormal basis of $H_0(M)$.  Hence
		\(|B|=\dens H_0(M)=\kappa\), because the density character of an infinite
		Hilbert space equals the cardinality of any Hilbert orthonormal basis.  By
		Lemma~\ref{lem:trace-zero-reduction}, $\{\widehat{\one}\}\cup\{\widehat{s}:s\in B\}$ is a Hilbert orthonormal
		basis of $L^2(M,\tau)$ consisting of self-adjoint unitaries.  Since
		\(\kappa>\aleph_0\), this basis has cardinality \(\kappa\).  Each of these
		vectors is a trace vector, again by Lemma~\ref{lem:trace-zero-reduction}.
	\end{proof}

	\appendix
	\section{Corners and finite amplification}\label{a:1}
The following full-corner density identity is standard in spirit,
although we include a proof in order to make the cardinal bookkeeping
explicit.  A nonzero corner of a type \(\mathrm{II}_1\) factor is a
full corner and is therefore \(W^*\)-Morita equivalent to the original
factor in the sense of Rieffel~\cite{Rie74}.  In the finite-factor
setting this equivalence can be realized concretely by a finite
amplification
\[
M\cong M_n(fMf),
\qquad f\leq e,
\]
and finite Hilbert direct sums do not alter density character.  We use
the standard continuous dimension and comparison theory of
projections~\cite[Chapters~6 and~8]{KR97II}, together with the
terminology for cardinal invariants used in~\cite{She12}.

\begin{lem}
	Let \(M\) be a \(\mathrm{II}_1\) factor with faithful normal tracial
	state \(\tau\), and let \(0\neq e\in M\) be a projection.  Equip \(eMe\)
	with its normalized trace
$	\tau_e(x)=\tau(x)/\tau(e), 
$ where $x\in eMe.$
	Then
	\[
	\dens L^2(eMe,\tau_e)=\dens L^2(M,\tau).
	\]
	In particular, if
	\(\dens L^2(M,\tau)=\kappa>\aleph_0\), then
	\[
	\dens L^2(eMe,\tau_e)=\kappa.
	\]
\end{lem}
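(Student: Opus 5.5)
We prove the two inequalities separately. Throughout, density character is insensitive to rescaling the trace, since $\|\cdot\|_{2,\tau_e}=\tau(e)^{-1/2}\|\cdot\|_{2,\tau}$ on $eMe$. Hence we may compute $\dens L^2(eMe,\tau_e)$ as the density of $eMe$ with respect to the restriction of $\|\cdot\|_2$. Write $\kappa=\dens L^2(M,\tau)$ and $\mu=\dens L^2(eMe,\tau_e)$.

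\textbf{The inequality $\mu\le\kappa$.} The map $x\mapsto exe$ is an $L^2$-contraction of $M$ onto $eMe$, because $\|exe\|_2\le\|x\|_2$. It therefore extends to the orthogonal projection $\xi\mapsto e\xi e$ of $L^2(M)$ onto $L^2(eMe)$. The image of an $L^2$-dense subset of $M$ of cardinality $\kappa$ is then $L^2$-dense in $eMe$, which gives $\mu\le\kappa$.

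\textbf{The inequality $\kappa\le\mu$.} We use finite amplification, as in Lemma~\ref{lem:corner-density}. Choose $n$ with $1/n\le\tau(e)$. By the comparison theory for $\mathrm{II}_1$ factors, choose:
\begin{enumerate}[label=(\alph*)]
\item mutually orthogonal projections $e_1,\dots,e_n$ with $\sum_i e_i=\one$ and $\tau(e_i)=1/n$;
\item a single projection $f\le e$ with $\tau(f)=1/n$;
\item partial isometries $v_i$ with $v_i^*v_i=f$ and $v_iv_i^*=e_i$.
\end{enumerate}
For every $x\in M$ we have the identities
\[
x=\sum_{i,j}e_ixe_j=\sum_{i,j}v_i\,(v_i^*xv_j)\,v_j^*,
\qquad v_i^*xv_j\in fMf\subset eMe .
\]
Each map $y\mapsto v_iyv_j^*$ from $eMe$ to $M$ is $L^2$-continuous, since $\|v_iyv_j^*\|_2\le\|y\|_2$. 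Fix an $L^2$-dense set $D\subset eMe$ with $|D|\le\mu$, which we may assume infinite. The set
\[
\Bigl\{\textstyle\sum_{i,j}v_i d_{ij}v_j^*:\ d_{ij}\in D\Bigr\}
\]
has cardinality at most $\mu^{n^2}$, which equals $\mu$ when $\mu$ is infinite. It is $L^2$-dense in $M$: given $x\in M$, approximate each $v_i^*xv_j\in eMe$ by some $d_{ij}\in D$ and sum the $n^2$ errors. This yields $\kappa\le\max\{\mu,\aleph_0\}$.

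\textbf{Removing the $\aleph_0$.} This is the only step requiring care. Since $M$ is a $\mathrm{II}_1$ factor, $L^2(M)$ is infinite-dimensional. Likewise $eMe$ is a $\mathrm{II}_1$ factor, so $L^2(eMe)$ is infinite-dimensional and $\mu\ge\aleph_0$. Consequently $\max\{\mu,\aleph_0\}=\mu$, and we obtain $\kappa\le\mu$.

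Combining the two inequalities gives $\mu=\kappa$. The ``in particular'' clause is then immediate. The main technical points are the existence of $f\le e$ with $\tau(f)=1/n$, which follows from the range of the trace on projections in a $\mathrm{II}_1$ factor, and the equivalence $f\sim e_i$, which follows from equality of traces in a finite factor. Both are standard consequences of \cite[Ch.~6]{KR97II}.
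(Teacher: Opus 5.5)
Your proof is correct and takes essentially the paper's route. The easy inequality comes from compressing to the corner, the reverse one from finite amplification through $e_1,\dots,e_n$, $f\le e$ and partial isometries $v_i$, and the infinite-dimensionality of the corner absorbs the $\aleph_0$. The only difference is packaging: the paper first builds the trace-preserving isomorphism $\Phi\colon M\to M_n(fMf)$ and compares $L^2(M)$ with $n^2$ copies of $L^2(fMf)\subset L^2(eMe)$, whereas you build the dense subset of $M$ directly from $x=\sum_{i,j}v_i(v_i^*xv_j)v_j^*$, as in Lemma~\ref{lem:corner-density}.
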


\begin{proof}
	Up to multiplication of the norm by the constant
	\(\tau(e)^{-1/2}\), the space \(L^2(eMe,\tau_e)\) identifies with the
	closed subspace
$
	eL^2(M,\tau)e
$
	of \(L^2(M,\tau)\).  Consequently,
	\[
	\dens L^2(eMe,\tau_e)
	\leq
	\dens L^2(M,\tau).
	\]
	
	For the reverse inequality, choose \(n\in\mathbb N\) such that
$
	1/n\leq\tau(e).
$
	By the continuous dimension theory of \(\mathrm{II}_1\) factors, there
	exist mutually orthogonal projections \(e_1,\ldots,e_n\in M\) and a
	projection \(f\leq e\) such that
	\[
	\sum_{i=1}^n e_i=1,
	\qquad
	\tau(e_i)=\tau(f)=\frac1n
	\quad (1\leq i\leq n).
	\]
	Since projections of equal trace in a finite factor are
	Murray--von Neumann equivalent, we may choose partial isometries
	\(v_1,\ldots,v_n\in M\) satisfying
	\[
	v_i^*v_i=f,
	\qquad
	v_iv_i^*=e_i.
	\]
	
	Define
	\[
	\Phi:M\longrightarrow M_n(fMf),
	\qquad
	\Phi(x)=\bigl(v_i^*xv_j\bigr)_{i,j=1}^n.
	\]
	Since \(\sum_i e_i=1\), a direct computation shows that \(\Phi\) is a
	unital \( * \)-homomorphism.  Its inverse is given by
	\[
	\Phi^{-1}\bigl((a_{ij})_{i,j}\bigr)
	=
	\sum_{i,j=1}^n v_i a_{ij}v_j^*,
	\qquad
	(a_{ij})_{i,j}\in M_n(fMf).
	\]
	Hence
	\[
	M\cong M_n(fMf).
	\]
	
	Let
	\[
	\tau_f=\frac{1}{\tau(f)}\,\tau|_{fMf}
	=n\tau|_{fMf},
	\]
	and let \(\operatorname{tr}_n\) denote the normalized trace on
	\(M_n(\mathbb C)\).  For every \(x\in M\),
	\[
	\begin{aligned}
		(\operatorname{tr}_n\otimes\tau_f)(\Phi(x))
		&=
		\frac1n\sum_{i=1}^n
		\tau_f(v_i^*xv_i)\\
		&=
		\sum_{i=1}^n\tau(v_i^*xv_i)\\
		&=
		\sum_{i=1}^n\tau(xe_i)
		=
		\tau(x).
	\end{aligned}
	\]
	Thus \(\Phi\) is trace preserving and extends to a unitary
	\[
	L^2(M,\tau)
	\cong
	L^2\bigl(M_n(fMf),
	\operatorname{tr}_n\otimes\tau_f\bigr).
	\]
	
	As a Hilbert space,
	\(L^2(M_n(fMf))\) is a finite direct sum of \(n^2\) copies of
	\(L^2(fMf,\tau_f)\).  Since \(fMf\) is again a
	\(\mathrm{II}_1\) factor, it follows that
	\[
	\dens L^2(M,\tau)
	=
	\dens L^2(fMf,\tau_f).
	\]
	Finally, after a constant rescaling of the norm,
	\(L^2(fMf,\tau_f)\) identifies with the closed subspace
	\[
	fL^2(eMe,\tau_e)f
	\subset L^2(eMe,\tau_e).
	\]
	Therefore
	\[
	\dens L^2(M,\tau)
	=
	\dens L^2(fMf,\tau_f)
	\leq
	\dens L^2(eMe,\tau_e).
	\]
	Combining the two inequalities proves the assertion.
\end{proof}

\begin{rem}
	Since \(M\) is a factor, every nonzero projection in \(M\) has central
	support \(1\).  Thus \(eMe\) is a full corner of \(M\), and the
	\(M\)-\(eMe\) bimodule \(Me\) implements a \(W^*\)-Morita equivalence
	in the sense of \cite{Rie74}.
	
	In the present finite setting, the condition \(\tau(e)>0\) implies that
	finitely many copies of \(e\) dominate \(1\) in the Murray--von Neumann
	order.  The isomorphism
	\[
	M\cong M_n(fMf),\qquad f\leq e,
	\]
	appearing in the proof is a concrete finite-amplification realization
	of this equivalence.  Since finite Hilbert direct sums do not change
	density character, the lemma may be viewed as the corresponding
	\(L^2\)-density consequence of the full-corner relation.
\end{rem}
	
\end{document}